\DeclareMathOperator{\supp}{supp}
\DeclareMathOperator{\osc}{\mbox{osc}}
\DeclareMathOperator{\defeq}{\mathrel{\mathop:}=}
\newtheorem{theorem}{Theorem}[section]
\newtheorem{lemma}[theorem]{Lemma}
\newtheorem{proposition}[theorem]{Proposition}
\newtheorem{definition}{Definition}[section]
\newtheorem{corollary}[theorem]{Corollary}
\newtheorem{remark}[theorem]{Remark}
\numberwithin{equation}{section}
\numberwithin{theorem}{section}
\newcommand{\F}{\mathcal{F}}
\newcommand{\Ew}{\mathcal{E}^\omega}
\newcommand{\E}{\mathcal{E}}
\newcommand{\mean}{\mathbb{E}}
\newcommand{\PR}{\mathbb{P}}
\newcommand{\N}{\mathbb{N}}
\newcommand{\R}{\mathbb{R}}
\newcommand{\be}{\begin{equation}}
\newcommand{\ee}{\end{equation}}
\def\namedlabel#1#2{\begingroup
   \def\@currentlabel{#2}%
   \label{#1}\endgroup
}
\author{Chiarini, Alberto\thanks{
		Institut f{\"u}r Mathematik, Technische Universit{\"a}t Berlin, Stra{\ss}e des 17. Juni 136, 10623 Berlin, Germany. Email: chiarini@math.tu-berlin.de}
	\ and Deuschel, Jean-Dominique\thanks{ Institut f{\"u}r Mathematik, Technische Universit{\"a}t Berlin, Stra{\ss}e des 17. Juni 136, 10623 Berlin, Germany. Email: deuschel@math.tu-berlin.de}}
\title{Local Central Limit Theorem for diffusions in a degenerate and unbounded Random Medium.}
\begin{document}

\maketitle
\begin{abstract} We study a symmetric diffusion $X$ on $\R^d$ in divergence form in a stationary and ergodic environment, with measurable unbounded and degenerate coefficients. We prove  a quenched local central limit theorem for $X$, under some moment conditions on the environment; the key tool is a local parabolic Harnack inequality obtained with Moser iteration technique.\newline
	\newline
	\textit{2000 Mathematics Subject Classification:} 31B05, 60K37. \newline
	\textit{Keywords:} local central limit theorem, Harnack inequality, Moser iteration, diffusions in random environment.
\end{abstract}

\section{Description of the Main Result}

We model the stationary and ergodic random environment by a probability space $(\Omega,\mathcal{G},\mu)$, on which we define a measure-preserving group of transformations $\tau_x:\Omega\to \Omega$, $x\in\R^d$. One can think about $\tau_x\omega$ as a translation of the environment $\omega\in\Omega$ in direction $x\in \R^d$. The function $(x,\omega)\to \tau_x\omega$ is assumed to be $\mathcal{B}(\R^d)\otimes\mathcal{G}$-measurable and such that 
if $\tau_x A = A$ for all $x\in\R^d$, then $\mu(A)\in\{0,1\}$.
Given the random environment $(\Omega,\mathcal{G},\mu,\{\tau_x\}_{x\in\R^d})$ we can construct a stationary and ergodic random field simply taking a random variable $f:\Omega\to \R$ and defining $f^\omega(x):=f(\tau_x\omega)$, $x\in \R^d$.

We are given a $\mathcal{G}$-measurable function $a:\Omega\to \R^{d\times d}$ such that 
\begin{itemize}
\item[$(a.1)$\namedlabel{ass:a.1}{$(a.1)$}]there exist $\mathcal{G}$-measurable non-negative functions $\lambda,\Lambda:\Omega\to\R$ such that for $\mu$-almost all $\omega\in\Omega$ and all $\xi\in\R^d$
 \[
  \lambda(\omega)|\xi|^2\leq \langle a(\omega) \xi,\xi\rangle \leq \Lambda(\omega)|\xi|^2,
 \]
\item[$(a.2)$\namedlabel{ass:a.2}{$(a.2)$}]there exist $p,q\in[1,\infty]$ satisfying  $1/p+1/q<2/d$ such that
 \[
\mean_\mu[\Lambda^p]<\infty,\quad \mean_\mu[\lambda^{-q}]<\infty.
\]
\end{itemize}

Our diffusion process is formally associated with the following generator in divergence form
\begin{equation}\label{eq:formal}
L^{\omega} u(x) = \frac{1}{\Lambda^\omega(x)}\nabla\cdot(a^\omega(x)\nabla u(x)).
\end{equation}
Since $a^{\omega}(x)$ is modeling a random field, it is not natural to assume its differentiability in $x\in\R^d$. Therefore the operator defined in \eqref{eq:formal} does not make sense, and the standard techniques from the Stochastic Differential Equations theory or It\^o calculus are not helpful nor in the construction of the diffusion process nor in performing the relevant computations.

We will exploit Dirichlet Forms theory to construct the diffusion process formally associated with \eqref{eq:formal}. Instead of the operator $L^\omega$ we shall consider the bilinear form obtained by $L^\omega$ formally integrating by parts,
\begin{equation}\label{eq:df}
\Ew(u,v) = \sum_{i,j}\int_{\R^d} a_{ij}^\omega(x)\partial_i u(x)\partial_j u(x) dx
\end{equation}
for a proper class of functions $u,v\in\F^{\Lambda,\omega}\subset L^2(\R^d,\Lambda^\omega dx)$, more precisely $\F^{\Lambda,\omega}$ is the closure of $C_0^\infty(\R^d)$ in $L^2(\R^d,\Lambda^\omega dx)$ with respect to $\Ew+(\cdot,\cdot)_{\Lambda}$. It is a classical result of Fukushima \cite{fukushima1994dirichlet} that it is possible to associate to \eqref{eq:df} a diffusion process $(X^\omega, \PR_x^\omega)$ as soon as $(\lambda^\omega)^{-1}$ and $\Lambda^\omega$ are locally integrable. As a drawback, the process  cannot in general start from every $x\in\R^d$ but only from almost all, and the set of exceptional points may depend on the realization of the environment.

In \cite{chiarinideuschel} it was proved that if $\lambda^\omega(\cdot)^{-1},\Lambda^\omega(\cdot)\in L^\infty_{loc}(\R^d)$ for $\mu$-almost all $\omega\in\Omega$
then a quenched invariance principle holds for $X^\omega$, namely the scaled process $X^{\epsilon,\omega}_t\defeq \epsilon X^\omega_{t/\epsilon^2}$ converges in distribution under $\PR_0^\omega$ to a Brownian motion with a non-trivial deterministic covariance structure as $\epsilon\to 0$. In that work local boundness was assumed in order to get some regularity for the density of the process $X^\omega$ and avoid technicalities due to exceptional sets arising from Dirichlet forms theory. 

In this paper we show that if a quenched invariance principle holds, then under \ref{ass:a.1} and \ref{ass:a.2}, the density of $X^{\epsilon,\omega}$ converges uniformly on compacts to the gaussian density. Hence, to state the theorem we need the following assumption.
\begin{itemize}
\item[$(a.3)$\namedlabel{ass:a.3}{$(a.3)$}] Assume that there is a positive definite symmetric $d$-dimensional matrix $\Sigma$ such that for $\mu$-almost all $\omega\in\Omega$ we have that for almost all $o\in\R^d$, all balls $B\subset \R^d$ and all compact intervals $I\subset (0,\infty)$
\[
\lim_{\epsilon\to 0} \PR_o^\omega(\epsilon X^\omega_{t/\epsilon^2}\in B) = \frac{1}{\sqrt{(2\pi t)^{d} \det \Sigma}}\int_B  \exp\Bigl(-\frac{x\cdot \Sigma^{-1} x}{2t}\Bigr)\,dx
\]
uniformly in $t\in I$.
\end{itemize}

Observe that provided that $\lambda^\omega(\cdot)^{-1},\Lambda^\omega(\cdot)\in L^\infty_{loc}(\R^d)$ for $\mu$-almost all $\omega\in\Omega$ then assumption \ref{ass:a.3} is satisfied for all $o\in\R^d$, $\mu$-almost surely due to Theorem 1.1 in \cite{chiarinideuschel}.

\begin{theorem}\label{thm:localCLTRandom}
Let $d\geq 2$. Assume \ref{ass:a.1}, \ref{ass:a.2} and \ref{ass:a.3}. Let $p_t^\omega(\cdot,\cdot)$ be the density with respect to $\Lambda^\omega(x)dx$ of the semigroup $P_t^\omega$ associated to $(\Ew, \F^{\Lambda,\omega})$ on $L^2(\R^d,\Lambda^\omega dx)$. Let $r>0$ and $I\subset (0,\infty)$ compact. Then for  $\mu$-almost all $\omega\in\Omega$ we have that for almost all $o\in\R^d$
\begin{equation}\label{eq:localclt}
\lim_{\epsilon\to 0} \sup_{|x-o|\leq r} \sup_{t\in I}|\epsilon^{-d} p^\omega_{t/\epsilon^2}(o,x/\epsilon)-\mean_\mu[\Lambda]^{-1}k_t^\Sigma(x)|=0.
\end{equation}
If we assume further that $\lambda^\omega(\cdot)^{-1},\Lambda^\omega(\cdot)\in L^\infty_{loc}(\R^d)$ for $\mu$-almost all $\omega\in\Omega$ then \eqref{eq:localclt} is satisfied for all $o\in\R^d$.
\end{theorem}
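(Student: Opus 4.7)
The plan is the classical two-step route from an invariance principle to a local CLT: first, exploit a parabolic Harnack inequality (PHI) to show that the rescaled densities $q^\omega_\epsilon(t,x)\defeq\epsilon^{-d} p^\omega_{t/\epsilon^2}(o,x/\epsilon)$ form a relatively compact family in the space of continuous functions on compact subsets of $(0,\infty)\times\R^d$; second, identify the limit using the weak convergence of distributions granted by \ref{ass:a.3}, together with the spatial ergodic theorem applied to the weight $\Lambda^\omega(\cdot/\epsilon)$. The key identity tying the two steps together is that, for any ball $B$,
\begin{equation*}
\PR^\omega_o(\epsilon X^\omega_{t/\epsilon^2}\in B)=\int_B q^\omega_\epsilon(t,x)\,\Lambda^\omega(x/\epsilon)\,dx,
\end{equation*}
since $q^\omega_\epsilon(t,\cdot)$ is precisely the density of $\epsilon X^\omega_{t/\epsilon^2}$ with respect to the (rescaled) reference measure $\Lambda^\omega(x/\epsilon)\,dx$. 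Moreover $q^\omega_\epsilon$ is a positive solution of the parabolic equation associated with the rescaled environment $a^\omega(\cdot/\epsilon)$, $\Lambda^\omega(\cdot/\epsilon)$.

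\textbf{Step 1: local parabolic Harnack inequality via Moser iteration.} The main analytical input is a PHI for non-negative weak solutions of $\partial_t u = (\Lambda^\omega)^{-1}\nabla\cdot(a^\omega\nabla u)$ on unit balls. I will carry out the standard Moser iteration, where the crucial new ingredient is a weighted Sobolev--Poincar\'e inequality whose constants can be controlled by spatial averages of $\Lambda^p$ and $\lambda^{-q}$. The moment condition $1/p+1/q<2/d$ is exactly what is needed to combine the weighted Sobolev embedding with H\"older's inequality and close the iteration, producing both $L^2$-to-$L^\infty$ bounds and the usual oscillation reduction. After rescaling, the PHI applied to $q^\omega_\epsilon$ gives a H\"older modulus of continuity and a uniform upper bound on any compact $K\subset (0,\infty)\times\R^d$, with random constants depending only on spatial averages of $\Lambda^\omega(\cdot/\epsilon)^p$ and $\lambda^\omega(\cdot/\epsilon)^{-q}$ over balls covering $K$; by the spatial ergodic theorem and \ref{ass:a.2}, these averages converge $\mu$-a.s.\ to $\mean_\mu[\Lambda^p]$ and $\mean_\mu[\lambda^{-q}]$, so the estimates are uniform in $\epsilon$.

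\textbf{Step 2: passage to the limit and identification.} By Arzel\`a--Ascoli, any subsequence $\epsilon_n\to 0$ has a further subsequence along which $q^\omega_{\epsilon_n}\to f$ uniformly on compacts, for some continuous $f\geq 0$. For a ball $B$ and $t\in I$, combining the identity above with \ref{ass:a.3} yields
\begin{equation*}
\int_B q^\omega_\epsilon(t,x)\,\Lambda^\omega(x/\epsilon)\,dx\;\longrightarrow\;\int_B k^\Sigma_t(x)\,dx.
\end{equation*}
Writing the left-hand side as $\int_B(q^\omega_\epsilon-f)\Lambda^\omega(\cdot/\epsilon)\,dx+\int_B f\,\Lambda^\omega(\cdot/\epsilon)\,dx$, the first term vanishes because of uniform convergence of $q^\omega_\epsilon$ together with the uniform bound $\int_B\Lambda^\omega(\cdot/\epsilon)\,dx\to|B|\mean_\mu[\Lambda]$, while the ergodic theorem (applied to the continuous test function $f(t,\cdot)\mathbf{1}_B$) gives $\int_B f(t,x)\Lambda^\omega(x/\epsilon)\,dx\to\mean_\mu[\Lambda]\int_B f(t,x)\,dx$. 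Hence $\mean_\mu[\Lambda]\int_B f(t,x)\,dx=\int_B k^\Sigma_t(x)\,dx$ for every ball $B$ and every $t\in I$, forcing $f(t,x)=\mean_\mu[\Lambda]^{-1}k^\Sigma_t(x)$. Since the limit is independent of the subsequence, the full family converges uniformly on compacts, which is exactly \eqref{eq:localclt}; uniformity in $t\in I$ comes from the joint continuity in $(t,x)$ provided by the PHI.

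\textbf{Main obstacle.} The principal work is entirely in Step 1: establishing the PHI for an operator whose coefficients are both unbounded above and degenerate below, under the sharp moment condition $1/p+1/q<2/d$. This requires a careful weighted Sobolev--Poincar\'e inequality whose constants are \emph{explicit} in $\Lambda$ and $\lambda^{-1}$, followed by a Moser iteration whose gain factors remain summable exactly because of the assumed moments. Extracting constants that are uniform in $\epsilon$ after rescaling then hinges on a quantitative use of the spatial ergodic theorem on all balls of bounded radius simultaneously. A secondary, more routine, obstacle is the exceptional-set issue from Dirichlet form theory: the transition density is a priori defined only quasi-everywhere in $o$, but the PHI provides a jointly continuous version of $(t,x)\mapsto p^\omega_t(o,x)$ for almost every $o$, which gives \eqref{eq:localclt} for a.e.\ $o$; under the additional local boundedness assumption, continuity holds for every $o$ by the argument of \cite{chiarinideuschel}.
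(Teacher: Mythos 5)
Your proposal is correct and matches the paper's approach in its analytical core: a parabolic Harnack inequality obtained by weighted Moser iteration under $1/p+1/q<2/d$, giving equicontinuity of the rescaled densities $q^\omega_\epsilon$, with the spatial ergodic theorem used both to control the Harnack constants uniformly in $\epsilon$ and to handle the weight $\Lambda^\omega(\cdot/\epsilon)$. The one genuine difference is in the final identification step. The paper (Theorem \ref{thm:localCLT}, following Croydon--Hambly) avoids compactness entirely: it directly decomposes $\epsilon^{-d}\int_B p_{t/\epsilon^2}(o,y/\epsilon)\Lambda(y/\epsilon)\,dy-\int_B k^\Sigma_t\,dy$ into four pieces $J_1,\dots,J_4$, bounds $J_1$ by the H\"older estimate, $J_3$ by the ergodic theorem, $J_4$ by continuity of $k_t$, the whole $J$ by the invariance principle, and then isolates $J_2$, which is essentially $\epsilon^{-d}p_{t/\epsilon^2}(o,x/\epsilon)-\mean_\mu[\Lambda]^{-1}k^\Sigma_t(x)$ times a controlled factor. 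You instead extract an Arzel\`a--Ascoli subsequential limit $f$ and identify it, then conclude via uniqueness of limits. Both routes use exactly the same inputs; the paper's decomposition is slightly more quantitative and avoids having to apply the ergodic theorem to the a priori $\omega$-dependent limit $f$. That last point in your argument is slightly informal and deserves a remark: to make sense of ``applying the ergodic theorem to $f(t,\cdot)\mathbf{1}_B$'' you should first fix a countable family of balls, obtain $\int_B\Lambda^\omega(\cdot/\epsilon)\,dx\to|B|\mean_\mu[\Lambda]$ simultaneously for all of them on a full-measure event, and then use continuity and boundedness of $f$ via a Riemann-sum approximation. This is routine and does not affect correctness, but the paper's direct bounding sidesteps the issue.
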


\paragraph{The method.} The proof of Theorem \ref{thm:localCLTRandom} relies strongly on a priori estimates for solutions to the ``formal'' parabolic equation
\begin{equation}\label{eq:formalpar}
\partial_t u(t,x)-\frac{1}{\Lambda^\omega(x)}\nabla\cdot (a^\omega(x)\nabla u(t,x)) = 0,\qquad t\in(0,\infty),\,x\in\R^d.
\end{equation}
It is well known that when $x\to a^\omega(x)$ and $x\to \Lambda^\omega(x)$ are bounded and bounded away from zero, uniformly in $\omega\in\Omega$, then a parabolic Harnack's inequality holds for solutions to \eqref{eq:formalpar}, this is a celebrated result due to Moser \cite{moser1964parabolic}. He showed that there is a positive constant $C_{PH}$, which depends only on the uniform bounds on $a$ and $\Lambda$, such that for any positive weak solution of \eqref{eq:formalpar} on $(t,t+r^2)\times B(x,r)$ we have
\[
\sup_{(s,z)\in Q_-} u(s,z) \leq C_{PH} \inf_{(s,z)\in Q_+} u(s,z)
\]
where $Q_-=(t+1/4 r^2,t+1/2 r^2)\times B(x,r/2)$ and $Q_+=(t+3/4 r^2,t+ r^2)\times B(x,r/2)$. The parabolic Harnack inequality plays a prominent role in the theory of partial differential equations, in particular to prove H\"older continuity for solutions to parabolic equations, as it was observed by Nash \cite{nash1958} and De Giorgi \cite{zbMATH03138423}, or to prove Gaussian type bounds for the fundamental solution $p_t^\omega(x,y)$ of \eqref{eq:formalpar} as done by Aronson \cite{aronson1967}. It is remarkable that such results do not depend neither on the regularity of $a$ nor of $\Lambda$.

In this paper we shall exploit the stability of Moser's method to derive a parabolic Harnack inequality also in the case of degenerate and possibly unbounded coefficients. The technique is quite flexible and can also be applied to discrete space models for which we refer to \cite{deuschelslowikandresharnack}
. 

Moser's method is based on two steps. One wants first to get a Sobolev inequality to control some $L^\rho$ norm in terms of the Dirichlet form and then control the Dirichlet form of any caloric function by a lower moment. This sets up an iteration which leads to bound the $L^\infty$ norm of the caloric function. In the uniform elliptic case this is rather standard and it is possible to control the $L^{2d/d-2}$ norm by the $L^2$ norm. In our case the  coefficients are neither bounded from above nor from below and we need to work with a weighted Sobolev inequality, which was already established in \cite{chiarinideuschel} by means of H\"older's inequality. Doing so we are able to control locally on balls the $L^\rho$ norm by means of the $L^{2p^*}$ norm, with $\rho = 2qd/[q(d-2)+d]$. In order to start the iteration we need $\rho>2 p^*$ which is equivalent to $1/p+1/q<2/d$. This integrability assumption firstly appeared in \cite{edmundspeletier} in order to extend the results of De Giorgi and Nash to degenerate elliptic equations, although they focus on weights belonging to the Muckenhaupt's class.  A similar condition was also recently exploited in \cite{Zhikov} to obtain Aronson type estimates for solutions to degenerate parabolic equations.

Following the classic proof of Moser, with some extra care due to the different exponents we get a parabolic Harnack inequality for solution to \eqref{eq:formalpar} in our setting. In the uniform elliptic and bounded case the constant in front of the Harnack inequality was depending only on uniform bounds on $a$ and $\Lambda$. In our setting we cannot expect that to be true for general weights, and the constant will strongly depend on the center and the radius of the ball, in particular we don't have any control for small balls, so that a genuine H\"older's continuity result like the one of Nash is not given.  Luckily in the diffusive limit the ergodic theorem helps to control constants and to give Theorem \ref{thm:localCLTRandom}.

\begin{remark}Given a speed measure $\theta:\Omega \to (0,+\infty)$ one can consider also the Dirichlet form $(\Ew,\F^{\theta,\omega})$ on $L^2(\R^d,\theta^\omega dx)$ where $\Ew$ is given by \eqref{eq:df} and $\F^{\theta,\omega}$ is the closure of of $C_0^\infty(\R^d)$ in $L^2(\R^d,\theta^\omega dx)$ with respect to $\Ew+(\cdot,\cdot)_{\theta}$. This corresponds to the formal generator
\[
L^{\omega} u(x) = \frac{1}{\theta^\omega(x)}\nabla\cdot(a^\omega(x)\nabla u(x)).
\]
One can show along the same lines of the proof for $\theta = \Lambda$ that if
\[
\mean_\mu[\theta^r]<\infty,\, \mean_\mu[\lambda^{-q}]<\infty,\, \mean_\mu[\Lambda^p \theta^{1-p}]<\infty,
\]
where $p,q,r\in(1,\infty]$ are such that
\[
\frac{1}{r}+\frac{1}{q}+\frac{1}{p-1}\frac{r-1}{r}<\frac{2}{d},
\]
then the parabolic Harnack inequality still works, in particular a quenched local central limit theorem can still be derived in this situation.

Observe that in the case $\theta=\Lambda$ we find back the familiar condition $1/p+1/q <2/d$. In the case that $\theta \equiv 1$, $r=\infty$ the condition reads $1/(p-1)+1/q<2/d$.
\end{remark}

\begin{remark} The condition $1/p+1/q<2/d$ is morally optimal to state Theorem \ref{thm:localCLTRandom}. Indeed it was shown in \cite{deuschelslowikandresharnack}[See Theorem 5.4] that if $1/p+1/q>2/d$, then there is an ergodic environment for which the quenched local central limit theorem does not hold. It is not hard construct an example also in the continuous by exploiting the same ideas given in \cite{deuschelslowikandresharnack}.

\end{remark}

A summary of the paper is the following. In Section 2 we present a deterministic model obtained by looking at a fixed realization of the environment. We derive Sobolev, Poincar\'e and Nash inequalities for such a model.

In Section 3 we prove a priori estimates, on-diagonal bounds and H\"older continuity type estimates for caloric functions. The main aim and result of the section is the parabolic Harnack inequality. 

In section 4 we prove a local Central Limit Theorem for the deterministic model which we apply to finally get Theorem \ref{thm:localCLTRandom}.
\section{Deterministic Model and Local inequalities}

Since we want to prove a quenched result we will develop a collection of inequalities for a deterministic model. With a slight abuse of notation we will note with $a(x)$, $\lambda(x)$ and $\Lambda(x)$ the deterministic versions of $a(\tau_x\omega)$, $\lambda(\tau_x\omega)$ and $\Lambda(\tau_x\omega)$.

We are given a symmetric matrix $a:\R^d\to \R^{d\times d}$ such that 
\begin{itemize}
\item[$(b.1)$\namedlabel{ass:b.1}{$(b.1)$}]there exist $\lambda,\Lambda:\R^d\to\R$ non-negative such that for almost all $x\in\R^d$ and $\xi\in\R^d$
 \[
  \lambda(x)|\xi|^2\leq \langle a(x) \xi,\xi\rangle \leq \Lambda(x)|\xi|^2,
 \]
\item[$(b.2)$\namedlabel{ass:b.2}{$(b.2)$}]there exist $p,q\in[1,\infty]$ satisfying  $1/p+1/q<2/d$ such that
 \[
  \limsup_{r\to \infty} \frac{1}{|B(0,r)|} \int_{B(0,r)} \Lambda^p +\lambda^{-q} \,dx <\infty.
 \]
\end{itemize}
Assumption \ref{ass:b.2} plays the role of ergodicity in the random environment model.

We are interested in finding a priori estimates for solutions to the formal parabolic equation
\begin{equation}\label{eq:par}
\partial_t u(t,x)-\frac{1}{\Lambda(x)}\nabla\cdot (a(x)\nabla u(t,x)) = 0,
\end{equation}
for $t\in (0,\infty)$ and $x\in \R^d$.

Clearly in the way it is stated \eqref{eq:par} is not well defined since $a$ is only assumed to be measurable. In order to make sense of \eqref{eq:par} we shall exploit the Dirichlet form framework, see \cite{fukushima1994dirichlet} for an exhaustive treatment on the subject. 

\subsection{Caloric Functions}
For this section we will follow \cite{bgk2f}. Let $\theta:\R^d\to\R$ be a non-negative function such that $\theta^{-1},\theta$ are locally integrable on $\R^d$.
Consider the symmetric form $\E$ on $L^2(\R^d,\theta dx)$ with domain $C_0^\infty(\R^d)$ defined by
\begin{equation}\label{eq:DF}
\E(u,v)\defeq\sum_{i,j}\int_{\R^d} a_{ij}(x)\partial_i u(x)\partial_j v(x)\,dx.
\end{equation}
Then, $(\E,C_0^\infty(\R^d))$ is  \emph{closable} in $L^2(\R^d,\theta dx)$ thanks to \cite{rockner}[Ch. II example 3b], since $\lambda^{-1},\Lambda\in L^1_{loc}(\R^d)$ by \ref{ass:b.2}. We shall denote by $(\E,\F^\theta)$ such a closure; it is clear that $\F^\theta$ is the completion of $C_0^\infty(\R^d)$ in $L^2(\R^d,\theta dx)$ with respect to $\E_1:=\E+(\cdot,\cdot)_\theta$. Observe that $(\E,\F^\theta)$ is a strongly local regular Dirichlet form, having $C_0^\infty(\R^d)$ as a core. In the case that $\theta\equiv 1$ we will simply write $\F$. Given an open subset $G$ of $\R^d$ we will denote by $\F_G^\theta$ the closure of $C_0^\infty(G)$ in $L^2(G,\theta dx)$ with respect to $\E_1$.

\begin{definition}[Caloric functions] Let $I\subset \R$ and $G\subset \R^d$ an open set. We say that a function $u : I \to \F^\theta$ is a subcaloric (supercaloric) function in $I\times G$ if $t\to(u(t,\cdot),\phi)_{\theta}$ is differentiable in $t\in I$ for any $\phi\in L^2(G,\theta dx)$ and
\begin{equation}\label{eq:parabolic}
\frac{d}{dt}(u,\phi)_{\theta}+\E(u,\phi)\leq 0,\quad (\geq)
\end{equation}
for all non negative $\phi\in \F_G^\Lambda$.  We say that a function $u : I \to \F^\theta$ is a caloric function in $I\times G$ if it is both sub- and supercaloric.
\end{definition}

It is clear from the definition that if a function is subcaloric on $I\times G$ than it is caloric on $I'\times G'$ whenever $I'\subset I$ and $G'\subset G$.

Moreover, observe that if $P_t^G$ is the semigroup associated to $(\E,\F^\theta)$ on $L^2(G, \theta dx)$ and $f\in L^2(G, \theta dx)$, for a given open set $G\subset \R^d$, then the function $u(t,\cdot)=P_t^G f(\cdot)$ is a caloric function on $(0,\infty)\times G$. To complete the picture we state the following maximum principle which appeared in \cite{grig2009}. For a real number $a$ denote by $a_+ = a \vee 0$. 

\begin{lemma} Fix $T\in (0,\infty]$, a set $G\subset \R^d$ and let $u :(0,T)\to \F^\theta_G$ be a subcaloric function in $(0,T)\times G$ which satisfies the boundary condition
$u_+(t,\cdot)\in \F^\theta_G$, $\forall t\in(0,T)$ 
and $u_+(t,\cdot)\to 0$ in $L^2(G,\theta dx)$ as $t\to 0$. Then $u\leq 0$ on $(0,T)\times G$.
\end{lemma}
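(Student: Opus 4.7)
My plan is to run the classical $L^2$ energy argument for sub--solutions. Since by assumption $u_+(t,\cdot)\in\F^\theta_G$ is non--negative it qualifies as an admissible test function in the subcaloric inequality \eqref{eq:parabolic} (identifying $\F^\theta_G$ with $\F^\Lambda_G$, as happens in the main application $\theta=\Lambda$ in which the lemma will be used). Plugging $\phi=u_+(t,\cdot)$ into \eqref{eq:parabolic} and using the pointwise identity $u\cdot u_+ = u_+^2$ one obtains, at least formally,
\[
\tfrac12\,\tfrac{d}{dt}\|u_+(t,\cdot)\|_{L^2(\theta)}^2 \;+\; \E\bigl(u(t,\cdot),u_+(t,\cdot)\bigr)\;\le\;0.
\]
Because $(\E,\F^\theta)$ is a strongly local regular Dirichlet form, the contraction/chain rule property yields $\E(u,u_+)=\E(u_+,u_+)\ge 0$, so the energy $t\mapsto\|u_+(t,\cdot)\|_{L^2(\theta)}^2$ is non--increasing on $(0,T)$. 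Combined with the boundary hypothesis $u_+(t,\cdot)\to 0$ in $L^2(G,\theta dx)$ as $t\downarrow 0$, this forces $\|u_+(t,\cdot)\|_{L^2(\theta)}=0$ for every $t\in(0,T)$, i.e.\ $u\le 0$ on $(0,T)\times G$, which is the claim.

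The point that requires care is that the definition of a subcaloric function only supplies differentiability of $t\mapsto(u(t,\cdot),\phi)_\theta$ for a \emph{fixed} $\phi\in L^2(G,\theta dx)$, whereas in the heuristic above the test function $u_+(t,\cdot)$ varies with $t$. To make the computation rigorous I would replace $r\mapsto r_+$ by a smooth non--decreasing Lipschitz approximation $\Phi_\eps$ vanishing on $(-\infty,0]$ with primitive $\Psi_\eps$ satisfying $\Psi_\eps(r)\uparrow r_+^2/2$ as $\eps\downarrow 0$. By the Markov property $\Phi_\eps(u(t,\cdot))\in\F^\theta_G$, and using it as test function the subcaloric inequality becomes
\[
\tfrac{d}{dt}\!\int_G \Psi_\eps\bigl(u(t,\cdot)\bigr)\,\theta\,dx \;+\; \E\bigl(u(t,\cdot),\Phi_\eps(u(t,\cdot))\bigr)\;\le\;0,
\]
where the time derivative is now justified by the standard chain rule for absolutely continuous curves in $L^2(G,\theta dx)$ (combined, if necessary, with a Steklov average of $u$ in $t$), and the Dirichlet form term is non--negative by strong locality. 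Integrating on $(s,t)\subset(0,T)$, sending $\eps\downarrow 0$ by monotone convergence and finally $s\downarrow 0$ by the initial assumption recovers $\|u_+(t,\cdot)\|_{L^2(\theta)}^2\le 0$.

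The main obstacle is precisely this justification of the chain rule in the parabolic Dirichlet form setting: neither $r\mapsto r_+$ is $C^1$ nor do we have any a priori regularity of $u$ as a curve in $t$ beyond what the definition provides. This is however by now classical and mirrors the argument given in \cite{grig2009}; the passage to the degenerate, non--uniformly elliptic coefficients considered here requires no new idea since strong locality and the Markov property of $(\E,\F^\theta)$ are preserved under \ref{ass:b.1}--\ref{ass:b.2}.
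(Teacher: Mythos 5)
The paper does not prove this lemma --- it is stated as a known result imported from \cite{grig2009} --- so there is no in-paper argument to compare against. Your sketch is precisely the standard energy argument from that reference and it is sound: test with a non-negative, non-decreasing Lipschitz truncation $\Phi_\eps$ of $r\mapsto r_+$ (which lies in $\F^\theta_G$ by the contraction property, cf.\ Lemma~\ref{lemma:composition}), use strong locality to discard $\E(u,\Phi_\eps(u))\ge 0$, integrate in time, and pass to the limit $\eps\downarrow 0$ and then let the lower time endpoint tend to $0$ using the initial hypothesis. You also correctly single out the only genuine technical point: the definition of a subcaloric function provides differentiability of $t\mapsto(u(t),\phi)_\theta$ only for a \emph{fixed} test function $\phi$, so differentiating $t\mapsto\int_G\Psi_\eps(u(t))\,\theta\,dx$ requires a parabolic chain rule, and your appeal to Steklov averages (time mollification of $u$) is exactly the device used to make this rigorous. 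One minor remark on the formal display: the factor $1/2$ is correct if one freezes $\phi=u_+(t_0,\cdot)$ and uses $(\partial_t u,u_+)_\theta=\tfrac12\,\tfrac{d}{dt}\|u_+(t,\cdot)\|^2_{L^2(\theta)}$ at $t=t_0$; but the parenthetical ``using $u\cdot u_+=u_+^2$'' alone would produce $\tfrac{d}{dt}\|u_+(t,\cdot)\|^2_{L^2(\theta)}$ without the $1/2$, so that phrasing could mislead --- the $\Psi_\eps$-computation with $\Psi_\eps\uparrow r_+^2/2$ gives the factor cleanly, and the conclusion is unaffected either way.
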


As a corollary of this lemma we have the super-mean value inequality for subcaloric functions.
\begin{corollary}Fix $T\in (0,\infty]$, an open set $G\subset \R^d$ and $f\in L^2(G,\theta dx)$ non-negative. Let $u :(0,T)\to \F^\theta_G$ be a non-negative subcaloric function on $(0,T)\times G$ such that $u(t,\cdot)\to f$ in $L^2(G,\theta dx)$ as $t\to0$. Then for any $t\in(0,T)$
\[
u(t,\cdot)\geq P_t^G f,\ \text{in $G$}.
\]
In particular for $0< s< t<T$
\[
u(t,\cdot)\geq P_{t-s}^G u(s,\cdot),\ \text{in $G$}.
\]
\end{corollary}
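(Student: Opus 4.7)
The plan is to reduce to the preceding maximum-principle lemma by comparing $u$ with $P_t^G f$ via the difference $w(t,\cdot):=P_t^G f - u(t,\cdot)$. Since $f\in L^2(G,\theta dx)$, the orbit $t\mapsto P_t^G f$ is caloric on $(0,T)\times G$, so \eqref{eq:parabolic} holds with equality for it. Reading the hypothesis ``subcaloric'' as \emph{supercaloric} (the sign convention required for a super--mean-value inequality, since a genuine subsolution with initial datum $f$ would satisfy the opposite bound $u\leq P_t^G f$), I subtract the supercaloric inequality for $u$ from the caloric identity for $P_t^G f$ to obtain, for every non-negative $\phi\in\F^\theta_G$,
\[
\frac{d}{dt}(w,\phi)_\theta+\E(w,\phi)\leq 0,
\]
i.e.\ $w$ is subcaloric on $(0,T)\times G$.

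To invoke the preceding lemma I must check that $w_+(t,\cdot)\in\F^\theta_G$ for every $t$ and that $w_+(t,\cdot)\to 0$ in $L^2(G,\theta dx)$ as $t\to 0$. Membership is immediate: both $u(t,\cdot)$ and $P_t^G f$ lie in $\F^\theta_G$, hence so does $w(t,\cdot)$, and since $(\E,\F^\theta_G)$ is a Markovian Dirichlet form the unit contraction $x\mapsto x_+$ operates on $\F^\theta_G$. For the initial behavior, strong continuity of $P_t^G$ on $L^2(G,\theta dx)$ gives $P_t^G f\to f$, while by hypothesis $u(t,\cdot)\to f$ in the same norm; hence $w(t,\cdot)\to 0$, and $|w_+|\leq |w|$ yields $w_+(t,\cdot)\to 0$ as well. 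The lemma then gives $w\leq 0$ on $(0,T)\times G$, which is exactly $u(t,\cdot)\geq P_t^G f$ in $G$.

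The second assertion is obtained by applying the first part to the time-shifted function $\tilde u(r,\cdot):=u(s+r,\cdot)$ on $(0,T-s)\times G$ with initial datum $u(s,\cdot)\in L^2(G,\theta dx)$; setting $t=s+r$ then yields $u(t,\cdot)\geq P_{t-s}^G u(s,\cdot)$. The main obstacle is establishing the \emph{strong} $L^2(G,\theta dx)$-continuity $\tilde u(r,\cdot)\to u(s,\cdot)$ as $r\to 0^+$ needed to apply the first part to $\tilde u$: the definition of supercaloric gives only weak differentiability of $t\mapsto(u(t,\cdot),\phi)_\theta$ for $\phi\in L^2(G,\theta dx)$, which by itself does not imply norm-continuity of $t\mapsto u(t,\cdot)$. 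One upgrades weak to strong continuity by combining the weak continuity implied by the time-differentiability with a Dirichlet-energy bound obtained by testing the supercaloric inequality against $u$ itself on short intervals to the right of $s$; this is the standard Dirichlet-form argument in \cite{grig2009} and is the only step that does not reduce at once to the preceding maximum principle.
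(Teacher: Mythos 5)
Your reduction to the preceding maximum-principle lemma via $w := P_t^G f - u$ is the natural argument, and the paper offers no proof of this corollary, so there is nothing to compare against. You are also right about the sign: applying the lemma to $u - P_t^G f$ with $u$ subcaloric and taking values in $\F_G^\theta$ yields $u\le P_t^G f$, the opposite of what is claimed; the statement is consistent only if $u$ is \emph{super}caloric, which is what the paper's own label ``super-mean value inequality'' and the role of the corollary both suggest. Your verification of the lemma's hypotheses for $w$ --- $w_+(t,\cdot)\in\F_G^\theta$ because normal contractions preserve the domain of the Markovian form $(\E,\F_G^\theta)$, and $w_+(t,\cdot)\to 0$ in $L^2(G,\theta dx)$ by strong continuity of $P_t^G$ together with the assumed $L^2$-convergence $u(t,\cdot)\to f$ --- is complete and correct.

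The caveat you raise about the second assertion is genuine. The definition of a (super)caloric function only yields differentiability of $t\mapsto(u(t,\cdot),\phi)_\theta$, i.e.\ weak $L^2$-continuity, whereas applying the first part to $r\mapsto u(s+r,\cdot)$ requires the strong convergence $u(s+r,\cdot)\to u(s,\cdot)$ in $L^2(G,\theta dx)$; and since $x\mapsto x_+$ is nonlinear, the weak convergence $P_r^G u(s,\cdot)-u(s+r,\cdot)\rightharpoonup 0$ does not by itself give norm convergence of the positive part. The energy estimate you sketch --- testing the caloric inequality against $u$ on short intervals to control $\int\E(u_\sigma,u_\sigma)\,d\sigma$ and then upgrading weak to strong continuity --- is the standard fix; an even cheaper alternative, common in the Dirichlet-form literature, is simply to build strong $L^2$-continuity into the definition of a caloric function, which is satisfied by $P_t^G f$ and by all the concrete caloric functions the paper subsequently uses.
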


\subsection{Sobolev inequalities}

In this section we will state local inequalities on the flat space $L^2(\R^d,dx)$ and on the weighted space $L^2(\R^d,\Lambda dx)$. We are interested in Sobolev, Poincar\'e and Nash type inequalities. The first and the second provide an effective tool for deriving local estimates on solutions to Elliptic and Parabolic degenerate partial differential equation, while the latter will be used to prove the existence of a kernel for the semigroup $P_t$ associated to $(\E,\F^\Lambda)$ on $L^2(R^d,\Lambda dx)$. 

We shall see that the constants appearing in the inequalities are strongly dependent on averages of $\lambda$ and $\Lambda$ and in particular on the ball where we focus our analysis. 

\paragraph{Notation.} Let $B\subset \R^d$ be a bounded set. For a function $u:B\to\R$, $r\geq 1$ and a weight $\theta:B\to \R$ we note
\[
\|u\|_{r,\theta} \defeq \biggl(\int_{\R^d} |u(x)|^r\theta(x)dx\biggr)^\frac{1}{r},\quad\|u\|_{r,B} \defeq \biggl(\frac{1}{|B|}\int_B |u(x)|^r\,dx\biggr)^\frac{1}{r}.
\]
and
\[
\|u\|_{r,B,\theta} \defeq \biggl(\frac{1}{|B|}\int_B |u(x)|^r\,\theta(x) dx\biggr)^\frac{1}{r}.
\]
In the sequel we shall use the symbol $\lesssim$ to say that the inequality $\leq$ holds up to a multiplicative constant depending only on the dimension $d\geq 2$.

In the next proposition it is enough to assume  $\Lambda\in L^1_{loc}(\R^d)$ and $\lambda^{-1}\in L^q_{loc}(\R^d)$.  The following constant will play an important role in the sequel,
\begin{equation}\label{eq:rho}
\rho(q,d):= \frac{2qd}{q(d-2)+d},
\end{equation}
observe that $\rho$ is the Sobolev's conjugate of $2q/(q+1)$.

\begin{proposition}[Local Sobolev inequality] Fix a ball $B\subset\R^d$. Then for all $u\in\F_B$
\begin{equation}\label{eq:localsobolev}
\|u\|_{\rho,B}^2\lesssim C_{S}^B |B|^\frac{2}{d}\,\frac{\E(u,u)}{|B|},
\end{equation}
where $C_S^B\defeq \|\lambda^{-1}\|_{q,B}$.
\end{proposition}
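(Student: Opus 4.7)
The plan is to reduce \eqref{eq:localsobolev} to the classical Euclidean Sobolev embedding by paying for the passage from the unweighted gradient norm to the Dirichlet form via H\"older's inequality against the weight $\lambda^{-1}$. The exponent $\rho$ is cooked up precisely so this interpolation balances: $\rho$ is the Sobolev conjugate of $r\defeq 2q/(q+1)$, which satisfies $r<2\leq d$ for all admissible $q$, so the classical inequality $\|u\|_\rho\lesssim \|\nabla u\|_r$ is available for $u\in C_0^\infty(B)$.

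First I would work on $C_0^\infty(B)$ and then extend to $\F_B$ by density, using that $\F_B$ is by definition the closure of $C_0^\infty(B)$ with respect to $\E_1$ and that both sides of \eqref{eq:localsobolev} are $\E_1$-continuous. On $C_0^\infty(B)$, the starting point is the classical Sobolev inequality
\[
\|u\|_\rho \lesssim \|\nabla u\|_r,\qquad r=\frac{2q}{q+1},
\]
extended by a null argument to $\R^d$ through the zero extension.

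The core computation is to estimate $\|\nabla u\|_r$ by $\E(u,u)^{1/2}$. Writing
\[
|\nabla u|^r = \lambda^{-r/2}\bigl(\lambda|\nabla u|^2\bigr)^{r/2}
\]
and applying H\"older with conjugate exponents $s=q+1$ and $s'=(q+1)/q$ (so that $rs/2 = q$ and $rs'/2 = 1$) yields
\[
\int_B |\nabla u|^r\,dx \leq \Bigl(\int_B \lambda^{-q}\,dx\Bigr)^{\!1/(q+1)}\Bigl(\int_B \lambda|\nabla u|^2\,dx\Bigr)^{\!q/(q+1)} \leq |B|^{1/(q+1)}\|\lambda^{-1}\|_{q,B}^{q/(q+1)}\,\E(u,u)^{q/(q+1)},
\]
where at the last step I use \ref{ass:b.1} to bound the integrand of the Dirichlet form from below by $\lambda|\nabla u|^2$. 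Taking the $r$-th root gives $\|\nabla u\|_r \leq |B|^{1/(2q)}\|\lambda^{-1}\|_{q,B}^{1/2}\E(u,u)^{1/2}$, since $r(q+1)=2q$.

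Combining with the classical Sobolev inequality, squaring, and passing from $\|u\|_\rho$ to the normalized $\|u\|_{\rho,B}$ produces a factor $|B|^{-2/\rho}$; a direct exponent check against the definition \eqref{eq:rho} gives $1/q - 2/\rho = 2/d - 1$, which is exactly the exponent required in \eqref{eq:localsobolev}. There is no serious obstacle here; the only point demanding attention is making sure that the Sobolev step is valid in every permissible configuration of $(d,p,q)$, in particular that $r<d$, which is automatic from $d\geq 2$ and $q\in[1,\infty]$ (with the mild case $d=2,q=\infty$ handled by noticing that one can always replace $q$ by any finite $q'<q$, since the condition $1/p+1/q<2/d$ is open). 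The density extension to $\F_B$ is routine because the right-hand side is continuous in $\E_1$-norm and the left-hand side is lower semicontinuous under $L^2$-convergence of a subsequence, both properties inherited from standard Sobolev space theory.
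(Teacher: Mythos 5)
Your proof is correct and follows exactly the same route as the paper: apply the classical Sobolev inequality $\|u\|_\rho \lesssim \|\nabla u\|_{2q/(q+1)}$ on $C_0^\infty(B)$, use H\"older against $\lambda^{-1}$ to pass from the unweighted gradient norm to the Dirichlet form, then extend to $\F_B$ by density. Your added care about the boundary case $r=d$ (i.e.\ $q=\infty$, $d=2$) and the explicit exponent bookkeeping are points the paper leaves implicit, but the argument is the same.
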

\begin{proof}  We start proving \eqref{eq:localsobolev} for $u\in C_0^\infty(B)$. Since $\rho$ as defined in \eqref{eq:rho} is the Sobolev conjugate of $2q/(q+1)$, by the classical Sobolev's inequality 
\[
\|u\|_\rho\lesssim \|\nabla u\|_{2q/(q+1)},
\]
where it is clear that we are integrating over $B$. By H\"older's inequality and \ref{ass:b.1} we can estimate the right hand side as follows
\[
\|\nabla u\|_{2q/(q+1)}^2=\Bigl(\int_B|\nabla u|^\frac{2q}{q+1}\lambda^\frac{q}{q+1}\lambda^{-\frac{q}{q+1}}\,dx\Bigr)^\frac{q+1}{q}\leq \|1_B\lambda^{-1}\|_{q}\,\E(u,u),
\]
which leads to \eqref{eq:localsobolev} for $u\in C_0^\infty(B)$ after averaging over the ball $B$. By approximation, the inequality is easily extended to $u\in\F_B$.
\end{proof}

\begin{proposition}[Local weighted Sobolev inequality] Fix a ball $B\subset\R^d$. Then for all $u\in\F_B^\Lambda$
\begin{equation}\label{eq:localweightedsobolev}
\|u\|_{\rho/p^*,B,\Lambda}^2\lesssim C^{B,\Lambda}_S|B|^\frac{2}{d} \, \frac{\E(u,u)}{|B|},
\end{equation}
being $C^{B,\Lambda}_S\defeq\|\lambda^{-1}\|_{q,B}\|\Lambda\|_{p,B}^{2p^*/\rho}$ and $p^*=p/(p-1)$.
\end{proposition}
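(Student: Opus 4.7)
The plan is to derive the weighted bound from the unweighted Sobolev inequality \eqref{eq:localsobolev} by interpolating against the weight $\Lambda$ via H\"older's inequality. First I would work with $u\in C_0^\infty(B)$, for which both sides are manifestly finite. Applying H\"older with the conjugate pair $(p,p^*)$ to the product $|u|^{\rho/p^*}\cdot\Lambda$ gives
\[
\frac{1}{|B|}\int_B |u|^{\rho/p^*}\Lambda\,dx \leq \frac{1}{|B|}\Bigl(\int_B |u|^\rho\,dx\Bigr)^{1/p^*}\Bigl(\int_B \Lambda^p\,dx\Bigr)^{1/p}=\|u\|_{\rho,B}^{\rho/p^*}\,\|\Lambda\|_{p,B},
\]
where the key book-keeping point is that the normalising factor $|B|^{-1}$ redistributes as $|B|^{-1/p^*}\cdot|B|^{-1/p}$ precisely because $1/p+1/p^*=1$. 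Raising to the power $2p^*/\rho$ yields
\[
\|u\|_{\rho/p^*,B,\Lambda}^{2}\leq \|u\|_{\rho,B}^{2}\,\|\Lambda\|_{p,B}^{2p^*/\rho}.
\]

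Next I would plug in the unweighted local Sobolev inequality \eqref{eq:localsobolev}, $\|u\|_{\rho,B}^2\lesssim \|\lambda^{-1}\|_{q,B}\,|B|^{2/d}\,\E(u,u)/|B|$. Combining the two displays produces exactly \eqref{eq:localweightedsobolev} with the constant $C^{B,\Lambda}_S=\|\lambda^{-1}\|_{q,B}\,\|\Lambda\|_{p,B}^{2p^*/\rho}$ for $u\in C_0^\infty(B)$.

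Finally I would extend by density to $u\in\F_B^\Lambda$. Take a sequence $\{u_n\}\subset C_0^\infty(B)$ with $\E(u_n-u,u_n-u)\to 0$ and $\|u_n-u\|_{2,\Lambda}\to 0$. Applying the just-proved inequality to $u_n-u_m$ shows $\{u_n\}$ is Cauchy in $L^{\rho/p^*}(B,\Lambda dx)$; combined with $L^2(B,\Lambda dx)$-convergence to $u$, extracting an a.e.\ convergent subsequence identifies the two limits, so $u_n\to u$ in $L^{\rho/p^*}(B,\Lambda dx)$ and the inequality passes to the limit.

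I do not expect any real obstacle: this is the interpolation step that the authors already foreshadow in the introduction when they write that the weighted Sobolev inequality follows ``by means of H\"older's inequality.'' The only delicate point is matching the exponents so that the weight appears raised exactly to the power $2p^*/\rho$, and verifying that the assumption $\rho>2p^*$ (equivalently $1/p+1/q<2/d$), which is what drives the eventual Moser iteration, is not yet needed at this step, so the hypotheses $\Lambda\in L^p_{loc}$ and $\lambda^{-1}\in L^q_{loc}$ indeed suffice here.
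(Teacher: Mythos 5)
Your argument is correct and follows essentially the same route as the paper: apply H\"older's inequality with exponents $(p,p^*)$ to compare $\|u\|_{\rho/p^*,B,\Lambda}$ with $\|u\|_{\rho,B}\,\|\Lambda\|_{p,B}^{p^*/\rho}$, then invoke the unweighted local Sobolev inequality. The only thing you add beyond what the paper writes is the explicit density passage from $C_0^\infty(B)$ to $\F_B^\Lambda$; this is actually a tidier way to handle the (minor) point that the unweighted inequality is stated on $\F_B$ rather than $\F_B^\Lambda$, since the identification $\F_B=\F_B^\Lambda$ is only recorded in the paper after both Sobolev inequalities have been proved.
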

\begin{proof} The proof easily follows from H\"older's inequality
\[
\|u\|_{\rho/p^*,B,\Lambda}^2\leq \|u\|_{\rho,B}^2 \|\Lambda\|_{p,B}^{2p^*/\rho}
\]
and the previous proposition. 
\end{proof}

\begin{remark}
From these two Sobolev's inequalities it follows that the domains $\F_B$ and $\F_B^\Lambda$ coincide for all balls $B\subset \R^d$. Indeed, from \eqref{eq:localsobolev} and \eqref{eq:localweightedsobolev}, since $\rho,\rho/p^*>2$, we get that $(\F_B,\E)$ and $(\F_B^\Lambda,\E)$ are two Hilbert spaces; therefore $\F_B,\F_B^\Lambda$ coincide with their extended Dirichlet space, which by \cite[pag 324]{Fukushima1987}, is the same, hence $\F_B = \F_B^\Lambda$. 
\end{remark}

\paragraph{Cutoffs.}Since assumptions \ref{ass:b.1} and \ref{ass:b.2} only assure local integrability of $\lambda^{-1}$ and $\Lambda$, we will need to work with functions that are locally in $\F$ or $\F^\Lambda$ and with cutoff functions.

Let $B\subset\R^d$ be a ball, a cutoff on $B$ is a function $\eta\in C_0^\infty(B)$, such that $0\leq \eta\leq 1$. 
Given $\theta:\R^d\to\R$ as before, we say that $u\in\F_{loc}^\theta$, if for all balls $B\subset\R^d$ there exists $u_B\in\F^\theta$ such that $u \equiv u_B$ almost surely on $B$.

In view of these notations, for $u,v\in \F^\theta_{loc}$ we define the bilinear form
\begin{equation}\label{eq:dfcutoff}
\E_\eta(u,v)=\sum_{i,j}\int_{\R^d} a_{ij}(x)\partial_i u(x)\partial_j v(x)\eta^2(x)\,dx
\end{equation}

\begin{proposition}[Local Sobolev inequality with cutoff] Fix a ball $B\subset\R^d$ and a cutoff function $\eta\in C_0^\infty (B)$ as above. Then for all $u\in\F_{loc}^\Lambda\cup\F_{loc}$
\begin{equation}\label{eq:localsobolev_cutoff}
\|\eta u\|_{\rho,B}^2\lesssim C_{S}^B|B|^\frac{2}{d} \Bigl[\frac{\E_\eta(u,u)}{|B|}+\|\nabla \eta\|_\infty^2\| u\|^2_{2,B,\Lambda}\Bigr],
\end{equation}
and, for the weighted version
\begin{equation}\label{eq:localweightedsobolev_cutoff}
\|\eta u\|_{\rho/p^*,B,\Lambda}^2\lesssim C_{S}^{B,\Lambda} |B|^\frac{2}{d} \Bigl[\frac{\E_\eta(u,u)}{|B|}+\|\nabla \eta\|_\infty^2\| u\|^2_{2,B,\Lambda}\Bigr].
\end{equation}
\end{proposition}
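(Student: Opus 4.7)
The plan is to deduce both inequalities by simply applying the two cutoff-free Sobolev inequalities already proved (namely \eqref{eq:localsobolev} and \eqref{eq:localweightedsobolev}) to the function $v := \eta u$, and then controlling $\E(v,v)$ in terms of $\E_\eta(u,u)$ plus a boundary/cutoff correction.

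First I would verify that $v=\eta u$ belongs to the space $\F_B=\F_B^\Lambda$ (the equality is the content of the remark above). Since $\eta \in C_0^\infty(B)$ and $u \in \F_{loc}\cup\F_{loc}^\Lambda$, one can pick a slightly larger ball $B'\supset B$ containing $\supp \eta$ and a representative $u_{B'}\in\F^\theta$ coinciding with $u$ on $B'$; then $\eta u_{B'}$ is the limit in the $\E_1$-norm of functions $\eta \phi_n$ with $\phi_n\in C_0^\infty$ approximating $u_{B'}$, and each $\eta\phi_n\in C_0^\infty(B)$, giving $\eta u\in \F_B$. With this in hand, \eqref{eq:localsobolev} and \eqref{eq:localweightedsobolev} apply to $\eta u$ and yield
\begin{equation*}
\|\eta u\|_{\rho,B}^2\lesssim C_S^B |B|^{2/d}\,\frac{\E(\eta u,\eta u)}{|B|},\qquad \|\eta u\|_{\rho/p^*,B,\Lambda}^2\lesssim C_S^{B,\Lambda}|B|^{2/d}\,\frac{\E(\eta u,\eta u)}{|B|}.
\end{equation*}

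The remaining (and only substantive) step is to bound $\E(\eta u,\eta u)$. Expanding the product rule and using the symmetry of $a$,
\begin{equation*}
\E(\eta u,\eta u) = \E_\eta(u,u) + 2\sum_{i,j}\int_B a_{ij}\,\eta u\,\partial_i u\,\partial_j\eta\,dx + \int_B \langle a\nabla\eta,\nabla\eta\rangle u^2\,dx.
\end{equation*}
The cross term is handled by the Cauchy--Schwarz inequality for the (pointwise) quadratic form $\langle a\cdot,\cdot\rangle$ followed by $2\sqrt{AB}\leq A+B$, giving
\begin{equation*}
\E(\eta u,\eta u)\leq 2\,\E_\eta(u,u) + 2\int_B \langle a\nabla\eta,\nabla\eta\rangle u^2\,dx.
\end{equation*}
By the upper bound in \ref{ass:b.1} we have $\langle a\nabla\eta,\nabla\eta\rangle\leq \Lambda|\nabla\eta|^2\leq \|\nabla\eta\|_\infty^2\,\Lambda$, so
\begin{equation*}
\E(\eta u,\eta u)\leq 2\,\E_\eta(u,u) + 2\|\nabla\eta\|_\infty^2\,|B|\,\|u\|_{2,B,\Lambda}^2.
\end{equation*}
Dividing by $|B|$ and inserting this into the two displayed Sobolev estimates for $\eta u$ produces \eqref{eq:localsobolev_cutoff} and \eqref{eq:localweightedsobolev_cutoff}, after absorbing the harmless factor of $2$ into the implicit constant.

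The main (and really only) obstacle is the cross term in the expansion of $\E(\eta u,\eta u)$: one must use the pointwise Cauchy--Schwarz inequality for the form $a$ rather than for the flat gradient, because neither $\eta\nabla u$ nor $u\nabla\eta$ can be controlled in flat $L^2$ without the weight. Using $\Lambda$ as the upper bound in \ref{ass:b.1} is what produces the weighted $L^2$ norm $\|u\|_{2,B,\Lambda}^2$ in the error term of both inequalities, which is essential for the subsequent Moser iteration to couple correctly with the weighted Sobolev inequality.
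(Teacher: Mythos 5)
Your proposal is correct and takes essentially the same route as the paper: show $\eta u\in\F_B$ (the paper delegates this to Lemma~\ref{lem:cutoff}), apply the cutoff-free Sobolev inequalities to $\eta u$, and bound $\E(\eta u,\eta u)\leq 2\E_\eta(u,u)+2\int\langle a\nabla\eta,\nabla\eta\rangle u^2\,dx$ before invoking $\langle a\nabla\eta,\nabla\eta\rangle\leq\Lambda\|\nabla\eta\|_\infty^2$. The paper writes the quadratic-form inequality $\langle a(v+w),v+w\rangle\leq 2\langle av,v\rangle+2\langle aw,w\rangle$ directly; you derive it by expanding the cross term and applying Cauchy--Schwarz for the form $a$, which is the same estimate spelled out in slightly more detail.
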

\begin{proof} 
We prove only $\eqref{eq:localsobolev_cutoff}$, being \eqref{eq:localweightedsobolev_cutoff} analogous. Take $u\in \F_{loc}\cup\F^\Lambda_{loc}$, by Lemma \ref{lem:cutoff} in the appendix, $\eta u\in\F_B$, therefore we can apply \eqref{eq:localsobolev} and get 
\[
\|\eta u\|_{\rho,B}^2\lesssim C_S^B |B|^\frac{2-d}{d} \E(\eta u,\eta u).
\]
To get \eqref{eq:localsobolev_cutoff} we compute $\nabla(\eta u) = u \nabla \eta+\eta\nabla u$ and we easily estimate
\begin{align*}
\E(\eta u,\eta u) &= \int_{\R^d} \langle a \nabla(\eta u),\nabla(\eta u)\rangle dx\\&\leq 2\int_{\R^d} \langle a \nabla u,\nabla u\rangle \eta^2  dx + 2\int_{\R^d} \langle a \nabla \eta,\nabla \eta\rangle |u|^2  dx 
\\&\leq2\E_\eta(u,u)+2\|\nabla\eta\|_\infty^2\|1_B u\|_{2,\Lambda}^2.
\end{align*}
Concatenating the two inequalities and averaging over $B$ we get the result.
\end{proof}

\subsection{Nash inequalities}

Local Nash inequalities follow as an easy corollary of the Sobolev's inequalities \eqref{eq:localsobolev} and \eqref{eq:localweightedsobolev}. 

\begin{proposition}[Nash inequality]
Let $B\subset\R^d$ be a ball. Then for all $u\in \F_B$ we have
\begin{equation}\label{ineq:nash}
\|u\|_{2,B}^{2+\frac{2}{\mu}}\lesssim C_S^B  |B|^{\frac{2-d}{d}}\E(u,u) \|u\|_{1,B}^{\frac{2}{\mu}},
\end{equation}
where $\mu\defeq (\frac{2}{d}-\frac{1}{q})^{-1}>0$, and
\begin{equation}\label{ineq:nash_weighted}
\|u\|_{2,\Lambda,B}^{2+\frac{2}{\gamma}}\lesssim C_S^{B,\Lambda}  |B|^{\frac{2-d}{d}}\E(u,u) \|u\|_{1,\Lambda,B}^{\frac{2}{\gamma}},
\end{equation}
where $\gamma\defeq \frac{p-1}{p}\, (\frac{2}{d}-\frac{1}{p}-\frac{1}{q})^{-1}$.
\end{proposition}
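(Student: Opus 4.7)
The plan is to derive both Nash inequalities from the corresponding Sobolev inequalities \eqref{eq:localsobolev} and \eqref{eq:localweightedsobolev} by a standard interpolation argument. The basic observation is that for any positive measure $\nu$ on $B$ and any $1 \leq r_1 < r < r_2$, H\"older's inequality applied to $|u|^r = |u|^{r\alpha}|u|^{r(1-\alpha)}$ yields
\[
\|u\|_{L^r(\nu)} \leq \|u\|_{L^{r_1}(\nu)}^{\alpha}\,\|u\|_{L^{r_2}(\nu)}^{1-\alpha}, \qquad \tfrac{1}{r}=\tfrac{\alpha}{r_1}+\tfrac{1-\alpha}{r_2}.
\]
This applies equally well to the averaged norms $\|\cdot\|_{r,B}$ (with $\nu = |B|^{-1}\mathbf{1}_B\,dx$) and to the weighted averaged norms $\|\cdot\|_{r,\Lambda,B}$ (with $\nu = |B|^{-1}\mathbf{1}_B\Lambda\,dx$).

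For \eqref{ineq:nash}, I would interpolate the $L^2$-norm between $L^1$ and $L^{\rho}$: with $r_1=1$, $r=2$, $r_2=\rho$ the interpolation exponent is $\alpha = (\rho-2)/[2(\rho-1)]$, and thus
\[
\|u\|_{2,B}^{2} \leq \|u\|_{1,B}^{2\alpha}\,\|u\|_{\rho,B}^{2(1-\alpha)}.
\]
Raising to the power $1/(1-\alpha)$ and plugging in \eqref{eq:localsobolev} to control $\|u\|_{\rho,B}^{2}$ gives
\[
\|u\|_{2,B}^{2/(1-\alpha)} \lesssim \|u\|_{1,B}^{2\alpha/(1-\alpha)}\,C_S^B\,|B|^{2/d-1}\,\E(u,u).
\]
A short computation shows $\alpha/(1-\alpha)=(\rho-2)/\rho$, and substituting $\rho=2qd/[q(d-2)+d]$ one checks
\[
\frac{\rho-2}{\rho}=\frac{2q-d}{qd}=\frac{1}{\mu},
\]
so the exponents match precisely those in \eqref{ineq:nash}.

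For the weighted version \eqref{ineq:nash_weighted}, I would run the same argument with $r_1=1$, $r=2$, $r_2=\rho/p^*$ but with respect to the measure $\Lambda\,dx/|B|$, so that the interpolated quantity is $\|u\|_{2,\Lambda,B}$ and the controlling $L^{\rho/p^*}(\Lambda dx)$-norm is governed by the weighted Sobolev inequality \eqref{eq:localweightedsobolev}. The corresponding interpolation exponent is $\beta = (\rho/p^*-2)/[2(\rho/p^*-1)]$, and the analogous computation gives $\beta/(1-\beta)=(\rho-2p^*)/\rho$, which after substituting the values of $\rho$ and $p^*=p/(p-1)$ simplifies to $\tfrac{p}{p-1}(\tfrac{2}{d}-\tfrac{1}{p}-\tfrac{1}{q})=\tfrac{1}{\gamma}$.

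No real obstacle is expected: both inequalities follow by direct application of H\"older interpolation to the Sobolev inequalities already proved, the only bookkeeping being the verification that the exponents $\mu$ and $\gamma$ arising from the interpolation coincide with the ones stated in the proposition. The hypothesis $1/p+1/q<2/d$ ensures $\mu,\gamma>0$, so the inequalities are meaningful.
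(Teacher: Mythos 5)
Your proof is correct and is essentially the same argument as the paper's: interpolate the $L^2$ norm between $L^1$ and $L^\rho$ (resp.\ $L^{\rho/p^*}$ in the weighted case) via H\"older, then bound the top norm by the (weighted) local Sobolev inequality. The only difference is cosmetic (you parametrize the interpolation by the exponent on $\|u\|_{1}$ while the paper parametrizes by the exponent on $\|u\|_{\rho}$), and you carry out the exponent bookkeeping explicitly where the paper leaves it to the reader.
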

\begin{proof}
We prove only \eqref{ineq:nash} being the other completely analogous. By H\"older's inequality
\[
\|u\|_{2,B}\leq \|u\|_{\rho,B}^{\theta}\|u\|_{1,B}^{1-\theta}
\]
with $\theta\in(0,1)$ and
\[
\frac{1}{2}=(1-\theta)+\frac{\theta}{\rho}.
\]
Now solve for $\theta$, use \eqref{eq:localsobolev} to estimate $\|u\|_{\rho,B}$ and the result is obtained.
\end{proof}

Note that the condition $1/p+1/q<2/d$ is important to have $\mu$ and $\gamma$ positive, in particular $\gamma\geq d/2$, with the equality holding if $p=q=\infty$. It is well known that Nash inequality \eqref{ineq:nash_weighted}  for the Dirichlet form $(\E,\F_B^\Lambda)$ implies the ultracontractivity of the semigroup $P^B_t$ associated to $\E$ on $L^2(B,\Lambda dx)$, in particular there exists a density $p^B_t(x,y)$ with respect to $\Lambda(x)dx$ which satisfies
\[
\sup_{x,y\in B} p^B_t(x,y) \lesssim  t^{-\gamma}C_S^B |B|^{\frac{2}{d}-\frac{1}{\gamma}},
\]
where it is once more worthy to notice that $2/d-1/\gamma\geq 0$, with the equality holding for the non-degenerate situation.

Furthermore, we have just seen that $P_t$ is locally ultracontractive, being $P_t^B$ ultracontractive for all balls $B\subset\R^d$. It follows by Theorem 2.12 of \cite{grigor'yan2012} that $P_t$ admits a symmetric transition kernel $p_t(x,y)$ on $(0,\infty)\times\R^d\times\R^d$ with respect to $\Lambda(x)dx$.
\subsection{Poincar\'e inequalities}

Let $B\subset \R^d$ be a ball. Given a weight $\theta:B\to [0,\infty]$, we denote by
\[
(u)^{\theta}_B\defeq \int_B u\,\theta dx\Big/ \int_B \theta dx,
\]
if $\theta\equiv 1$ we simply write $(u)_B$. Moreover, for $u\in\F_{loc}$ we denote
\[
\E_B(u,u)\defeq \int_B a\nabla u\cdot \nabla u\, dx.
\]
\begin{proposition}[Poincar\'e inequalities] Let $B\subset \R^d$ be a ball. If $u\in \F_{loc}$, then
\begin{equation}\label{ineq:Poincare}
\|u-(u)_B\|^2_{2,B}\lesssim C_P^B |B|^\frac{2-d}{d} \E_B(u,u),
\end{equation}
being $C_P^B\defeq \|\lambda^{-1}\|_{d/2,B}$, and
\begin{equation}\label{ineq:Poincare_weighted}
\|u-(u)^\Lambda_B\|^2_{2,B,\Lambda}\lesssim C^{B,\Lambda}_{P}|B|^\frac{2-d}{d}\E_B(u,u),
\end{equation}
being $C_P^{B,\Lambda}\defeq \|\Lambda\|_{\bar{p},B}\|\lambda^{-1}\|_{\bar{q},B}$ with $\bar{p},\bar{q}\in[1,\infty]$ such that $1/\bar{p}+1/\bar{q}=2/d$.
\end{proposition}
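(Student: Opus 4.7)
\emph{Strategy.} The idea is to start from the classical (unweighted) Sobolev--Poincar\'e inequality on a ball,
\[
\|u-(u)_B\|_{r,B}\lesssim |B|^{1/d}\,\|\nabla u\|_{s,B},\qquad r=\tfrac{ds}{d-s},\ 1\le s<d,
\]
and then import the weights $\lambda,\Lambda$ by two applications of H\"older's inequality: one to bound $\|\nabla u\|_{s,B}$ in terms of $\E_B(u,u)^{1/2}$ at the cost of a power of $\|\lambda^{-1}\|_{\cdot,B}$, and, for the weighted version, a further one on the left to peel the weight $\Lambda$ off the $L^2(\Lambda)$ norm, producing a factor of $\|\Lambda\|_{\bar p,B}$.

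\emph{Unweighted case.} I pick $s=2d/(d+2)$, so that the Sobolev conjugate is $r=2$ and Poincar\'e--Sobolev yields $\|u-(u)_B\|_{2,B}^2\lesssim |B|^{2/d}\|\nabla u\|_{s,B}^2$. I then write $|\nabla u|^s=(\lambda|\nabla u|^2)^{s/2}\cdot\lambda^{-s/2}$ and apply H\"older on $B$ with conjugate exponents $2/s$ and $2/(2-s)$. The exponent on $\lambda^{-1}$ becomes $s/(2-s)=d/2$, yielding the constant $\|\lambda^{-1}\|_{d/2,B}$, while $\lambda|\nabla u|^2\le\langle a\nabla u,\nabla u\rangle$ turns the remaining factor into $\E_B(u,u)$. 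A short bookkeeping of $|B|$ powers gives exactly $|B|^{(2-d)/d}$.

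\emph{Weighted case.} The $\Lambda$-weighted mean minimizes $c\mapsto\int_B(u-c)^2\Lambda\,dx$, so I may replace $(u)_B^\Lambda$ by $(u)_B$ in the left-hand side without loss. H\"older with exponents $\bar p,\bar p'=\bar p/(\bar p-1)$ then gives
\[
\int_B |u-(u)_B|^2\,\Lambda\,dx\ \le\ \Bigl(\int_B \Lambda^{\bar p}\,dx\Bigr)^{1/\bar p}\Bigl(\int_B |u-(u)_B|^{2\bar p'}\,dx\Bigr)^{1/\bar p'}.
\]
I now invoke Poincar\'e--Sobolev with $r=2\bar p'$, i.e.\ $s=\tfrac{2d\bar p'}{d+2\bar p'}$, and run the same $\lambda^{1/2}\lambda^{-1/2}$ H\"older split as above, this time choosing the exponent on $\lambda^{-1}$ to be $s/(2-s)=\bar q$. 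A direct computation shows that the two exponent requirements (the Sobolev conjugate matching $2\bar p'$ and the $\lambda^{-1}$ exponent matching $\bar q$) are compatible precisely when $1/\bar p+1/\bar q=2/d$, which is the stated constraint, and the overall $|B|$ power again collapses to $|B|^{(2-d)/d}$.

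\emph{Main obstacle.} None of this is deep: the content lies entirely in the two exponent choices, whose compatibility is equivalent to the hypothesis $1/\bar p+1/\bar q=2/d$. The one technicality is justifying the use of the classical Sobolev--Poincar\'e inequality for $u\in\F_{loc}$; this follows by a standard mollification argument, since assumption \ref{ass:b.2} with $1/p+1/q<2/d$ forces $q>d/2$ and hence $\lambda^{-1}\in L^{d/2}_{loc}$, which together with $\E_B(u,u)<\infty$ gives $\nabla u\in L^s_{loc}(B)$ for the exponents $s$ used above, so $u$ lies in the ordinary Sobolev space $W^{1,s}(B)$ on which the unweighted Sobolev--Poincar\'e inequality is classical.
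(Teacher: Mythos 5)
Your proof is correct and follows essentially the same route as the paper's: for \eqref{ineq:Poincare} a H\"older split $|\nabla u|^s = (\lambda|\nabla u|^2)^{s/2}\lambda^{-s/2}$ plugged into the classical Sobolev--Poincar\'e inequality with $s=2d/(d+2)$, and for \eqref{ineq:Poincare_weighted} first using $\inf_{a}\|u-a\|_{2,B,\Lambda}^2$ to replace $(u)_B^\Lambda$ by $(u)_B$, then H\"older with exponent $\bar p$ to strip $\Lambda$, then the $L^{2\bar p'}$--$L^\beta$ Poincar\'e--Sobolev inequality combined with a second H\"older split to introduce $\|\lambda^{-1}\|_{\bar q,B}$. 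The exponent bookkeeping you verify (that $s/(2-s)=\bar q$ is equivalent to $1/\bar p + 1/\bar q = 2/d$) is precisely the identity $2\bar p^* d/(d+2\bar p^*)=2\bar q/(\bar q+1)$ the paper records, and your closing remark that $q>d/2$ guarantees $\lambda^{-1}\in L^{d/2}_{loc}$, hence $\nabla u\in L^s_{loc}$, is the correct justification for applying the unweighted inequality to $u\in\F_{loc}$.
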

\begin{proof} For \eqref{ineq:Poincare} use H\"older's inequality for the standard Sobolev inequality \cite[Theorem 1.5.2]{saloff2002aspects}. We now prove \eqref{ineq:Poincare_weighted} for $u\in C^\infty(B)$, the final result can be obtained by approximation. As first remark, notice that
\begin{align*}
\|u-(u)^\Lambda_B&\|^2_{2,B,\Lambda} = \inf_{a\in\R}\|u-a\|^2_{2,B,\Lambda}\\ &\leq \|\Lambda\|_{\bar{p},B}\inf_{a\in\R}\|u-a\|^2_{2\bar{p}^*,B}\leq \|\Lambda\|_{\bar{p},B}\|u-(u)_B\|^2_{2\bar{p}^*,B}.
\end{align*}
We have by Theorem 1.5.2  in \cite{saloff2002aspects}.
\[
\|u-(u)_B\|^2_{2\bar{p}^*,B}\lesssim |B|^{\frac{2}{d}}\|\nabla u\|^2_{\beta,B}\leq \|\lambda^{-1}\|_{\bar{q},B}|B|^{\frac{2-d}{d}}\E_B(u,u).
\]
where $\beta$ is such that $2 \bar{p}^* d/(d+2\bar{p}^*) = \beta = 2\bar{q}/(\bar{q}+1)$, which is true whenever $1/\bar{p}+1/\bar{q}=2/d$.
Concatenating the two inequalities leads to the result.
\end{proof}

In order to get mean value inequalities for the logarithm of caloric functions and, given that, the parabolic Harnack inequality, we will need a Poincar\'e inequality with a radial cutoff. The cutoff function $\eta:\R^d\to[0,\infty)$ is supported in a ball $B=B(x_0,r)$, is a radial function, $\eta(x)\defeq \Phi(|x-x_0|/r)$ where $\Phi$ is some non-increasing, non-negative c\`adl\`ag function non identically zero on $(r/2,r]$.

\begin{proposition}[Poincar\'e inequalities with radial cutoff] Let $B\subset \R^d$ be a ball of radius $r>0$ and center $x_0$ and let $\eta$ be a cutoff as above. If $u\in \F_{loc}$, then
\begin{equation}\label{ineq:Poincare_eta}
\|u-(u)^{\eta^2}_B\|_{2,B,\eta^2}\lesssim M^B C_P^B |B|^\frac{2-d}{d}\E_\eta(u,u)
\end{equation}
where $M^B = \Phi(0)/\Phi(1/2)$, and
\begin{equation}\label{ineq:Poincare_etaweighted}
\|u-(u)^{\Lambda\eta^2}_B\|_{2,B,\Lambda\eta^2}\lesssim M^{B,\Lambda} C_P^{B,\Lambda} |B|^\frac{2-d}{d}\E_\eta(u,u),
\end{equation}
where $M^{B,\Lambda}\defeq M^B \|\Lambda\|_{1,B}/\|\Lambda\|_{1,B/2}$.
\end{proposition}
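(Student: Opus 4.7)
The approach is to reduce the radial-cutoff Poincar\'e inequality to the classical Poincar\'e inequality \eqref{ineq:Poincare} (and its weighted version \eqref{ineq:Poincare_weighted}) on concentric sub-balls, via a layer-cake decomposition of $\eta^2$. Starting from the variational characterisation of the weighted mean,
\[
\int_B |u-(u)^{\eta^2}_B|^2\eta^2\,dx = \inf_{c\in\R}\int_B |u-c|^2\eta^2\,dx \leq \int_B |u-(u)_{B_{1/2}}|^2\eta^2\,dx,
\]
where $B_{1/2}\defeq B(x_0,r/2)$, I would exploit the radial, non-increasing, c\`adl\`ag structure of $\Phi$ to write $\eta^2(x)=\int_0^{\Phi^2(0)} 1_{B_s}(x)\,ds$, where the super-level sets $B_s\defeq\{\eta^2>s\}$ are concentric sub-balls of $B$. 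Fubini then turns the right-hand side into $\int_0^{\Phi^2(0)}\int_{B_s}|u-(u)_{B_{1/2}}|^2\,dx\,ds$, and this $s$-integral is split at the threshold $s_1\defeq \Phi^2(1/2)$, at which $B_{s_1}=B_{1/2}$.

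On the low range $s\in(0,s_1]$ one has $B_{1/2}\subset B_s$. Writing $|u-(u)_{B_{1/2}}|^2 \leq 2|u-(u)_{B_s}|^2+2|(u)_{B_s}-(u)_{B_{1/2}}|^2$ and bounding the mean gap by Jensen absorbs it into $\int_{B_s}|u-(u)_{B_s}|^2\,dx$, to which I apply \eqref{ineq:Poincare} on $B_s$. Since $C_P^{B_s}|B_s|^{2/d}=(\int_{B_s}\lambda^{-d/2})^{2/d}$ is monotone in $B_s$, it is dominated by $C_P^B|B|^{2/d}$ uniformly in $s$, and reversing the layer-cake yields $\int_0^{s_1}\E_{B_s}(u,u)\,ds\leq \E_\eta(u,u)$. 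On the high range $s\in(s_1,\Phi^2(0))$ one has $B_s\subset B_{1/2}$, so $\int_{B_s}|u-(u)_{B_{1/2}}|^2\,dx\leq \int_{B_{1/2}}|u-(u)_{B_{1/2}}|^2\,dx$, which \eqref{ineq:Poincare} on $B_{1/2}$ bounds by $C_P^B|B|^{2/d}\,\E_{B_{1/2}}(u,u)$. Because $\eta^2\geq \Phi^2(1/2)=s_1$ on $B_{1/2}$, one has $\E_{B_{1/2}}(u,u)\leq s_1^{-1}\E_\eta(u,u)$, and integrating the resulting constant in $s$ over a range of length at most $\Phi^2(0)$ produces the factor $\Phi^2(0)/\Phi^2(1/2)$ responsible for the $M^B$ dependence in the constant. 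Summing the two contributions and averaging over $B$ gives \eqref{ineq:Poincare_eta}.

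The weighted inequality \eqref{ineq:Poincare_etaweighted} follows from the same scheme with $c=(u)^\Lambda_{B_{1/2}}$, invoking the weighted Poincar\'e \eqref{ineq:Poincare_weighted}, and still layer-caking only in $\eta^2$ (with $\Lambda$ carried through as a passive weight). The only new ingredient is the comparison of $\Lambda$-means on $B_s$ and $B_{1/2}$: Jensen with respect to $\Lambda\,dy$ on $B_{1/2}$ brings in an additional factor $\int_{B_s}\Lambda/\int_{B_{1/2}}\Lambda \leq \|\Lambda\|_{1,B}/\|\Lambda\|_{1,B/2}$, which combines with $M^B$ to produce $M^{B,\Lambda}$. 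The main technical point, and the only delicate one, is ensuring that the Dirichlet form emerging on the right is $\E_\eta(u,u)$ rather than the stronger $\E_B(u,u)$; this is achieved precisely by the threshold split, the low-$s$ contribution being absorbed into $\E_\eta$ by Fubini and the high-$s$ one relying on the lower bound $\eta^2\geq \Phi^2(1/2)$ on $B_{1/2}$. The latter step is where the dependence on $M^B$ (resp.\ $M^{B,\Lambda}$) enters and appears unavoidable without further control on $\Phi$.
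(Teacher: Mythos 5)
Your proposal takes a genuinely different and more self-contained route than the paper. The paper simply invokes Theorem~1 of Dyda--Kassmann (\cite{DydaKassmann}) as a black box: it packages the constant-mean Poincar\'e inequality on concentric balls into a functional $F(u,s)$ parametrised by the radius $s\in(r/2,r]$ and lets the cited theorem produce the radial-cutoff version, with the factor $\Phi(0)/\Phi(1/2)$ coming directly from that theorem. Your layer-cake argument, by contrast, decomposes $\eta^2$ in height rather than radius and splits the $s$-integral at $s_1=\Phi^2(1/2)$; the two ranges are then handled by exactly the devices you describe, and the absorption steps (Jensen for the mean gap in the low range, monotonicity of $(\int_{B_s}\lambda^{-d/2})^{2/d}$ and the reverse layer-cake for the Dirichlet form, the pointwise bound $\eta^2\geq\Phi^2(1/2)$ on $B(x_0,r/2)$ in the high range) are all correct. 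The set inclusions $B_{1/2}\subset B_s$ for $s<s_1$ and $B_s\subset B_{1/2}$ for $s\geq s_1$ hold exactly with this threshold, so the split is clean. This buys a proof that avoids the external citation entirely.

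One quantitative discrepancy is worth flagging. Your high-range estimate yields the factor $\Phi^2(0)/\Phi^2(1/2)=(M^B)^2$ rather than the $M^B=\Phi(0)/\Phi(1/2)$ stated in the proposition, and in the weighted case the two ranges combine additively to give a constant of the form $\|\Lambda\|_{1,B}/\|\Lambda\|_{1,B/2}+(M^B)^2$, not the product $M^{B,\Lambda}=M^B\,\|\Lambda\|_{1,B}/\|\Lambda\|_{1,B/2}$. Since the symbol $\lesssim$ absorbs only dimensional constants, you are proving a nominally weaker inequality than the one stated. This is inconsequential for the rest of the paper: in the only place the inequality is used (Theorem~\ref{thm:loginequalities}) $\Phi$ is the fixed choice $\Phi(t)=(1-t)_+$, so $M^B$ and $(M^B)^2$ are both pure numbers, and Section~\ref{subsec:constants} controls the $\Lambda$-ratio by a $\limsup$ in either form. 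Still, if you want to reproduce the stated constant, you should either refine the high-range estimate or state the modified constant explicitly.

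One small clarification you should add: when you decompose $|u-(u)_{B_{1/2}}|^2\leq 2|u-(u)_{B_s}|^2+2|(u)_{B_s}-(u)_{B_{1/2}}|^2$ in the low range and integrate the constant second term over $B_s$, the resulting factor $|B_s|/|B_{1/2}|\leq 2^d$ is what makes the absorption into $\int_{B_s}|u-(u)_{B_s}|^2$ cost only a dimensional constant; this is worth saying, since it is precisely what keeps the low-range contribution free of any $M^B$ factor.
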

\begin{proof}
We give the proof only for \eqref{ineq:Poincare_etaweighted} being \eqref{ineq:Poincare_eta} similar.
We apply Theorem 1 in \cite{DydaKassmann}. Accordingly we define a functional $F(u,s):L^2(\R^d,\Lambda dx)\times(r/2,r]\to[0,\infty]$ by
\[
F(u,s)\equiv C_P^{B_s,\Lambda} |B_s|^\frac{2}{d}\int_{B_s} a\nabla u\cdot\nabla u dx.
\]
for $u\in \F^\Lambda$, and $F(u,s)=\infty$ otherwise, being $B_s$ the ball of center $x_0$ and radius $s\in(r/2,r]$.

Such functional satisfies $F(u+a,s)=F(u,s)$ for all $a\in\R$ and $u\in L^2(\R^d,\Lambda dx)$, moreover
\[
\|u-(u)_{B_s}\|_{2,B_s,\Lambda}^2\lesssim |B_s|^{-1}F(u,s)
\]
for every $s\in(r/2,r]$ and $u\in \F^\Lambda$ by the Poincar\'e inequality \eqref{ineq:Poincare_weighted}. It follows from Theorem 1 in \cite{DydaKassmann} that for $u\in\F^\Lambda$ there exists $M>0$, explicitly given by $(\|\Lambda\|_{1,B} \Phi(0))/(\|\Lambda\|_{1,B/2}\Phi(1/2))$, such that
\begin{align*}
\|u-(u)^{\Lambda\eta^2}_{B}\|_{2,B,\Lambda\eta^2}&\lesssim M |B|^{-1} \int_{r/2}^{r} F(u,s)\nu(ds)\\
&\lesssim M C_P^{B,\Lambda}  |B|^\frac{2-d}{d}\int_{r/2}^{r} \int_B a\nabla u\cdot\nabla u 1_{B_s} dx\,\gamma(ds)\\
&=  M C_P^{B,\Lambda} |B|^\frac{2-d}{d} \int_B \eta^2 a\nabla u\cdot\nabla u dx.
\end{align*}
Here $\gamma(ds)$ is a non-zero positive $\sigma$-finite Borel measure on $(r/2,r]$ such that
\[
\eta^2(x)=\int_{r/2}^{r}1_{B_s}(x)\,\nu(ds)
\]
as in \cite{DydaKassmann}. 
Of course such an inequality is local and we can extend it for $u\in \F_{loc}$.
\end{proof}

\subsection{Remark on the constants}\label{subsec:constants} In this section about inequalities, we have introduced different constants, among them $C^{B,\Lambda}_{S}$, $C^{B,\Lambda}_{P}$ and $M^{B,\Lambda}$. Observe that they all strongly depend on the ball $B$, on the radius and the center as well. Assumption \ref{ass:b.2} helps us to control the behavior of the constants as the radius of the ball increases. For example, let us have a look at $C^{B,\Lambda}_{S}$. We have by \ref{ass:b.2}
\[
\limsup_{r\to\infty} C_S^{B(0,r),\Lambda} = \limsup_{r\to\infty} \|\lambda^{-1}\|_{q,B(0,r)}\|\Lambda\|_{p,B(0,r)}^{2p^*/\rho} =: C_S^{*,\Lambda}<\infty
\] 
It is not difficult to prove that the limit doesn't change if we consider balls centered at any $x\in\R^d$. Indeed it is easy to see that for $r>|x|$
\[
\biggl(\frac{r-|x|}{r}\biggr)^{\frac{2}{d}-\frac{1}{\gamma}}C_S^{B(0,r-|x|),\Lambda}\leq C_S^{B(x,r),\Lambda}\leq \biggl(\frac{r+|x|}{r}\biggr)^{\frac{2}{d}-\frac{1}{\gamma}} C_S^{B(0,r+|x|),\Lambda}
\]
%%In the homogenization setting it will be useful to have the following, for fixed $x\in\R^d$ and $r>0$
%%\[
%%\limsup_{\epsilon\to0} C_S^{B(x/\epsilon,r/\epsilon),\Lambda}<\infty
%%\]
%%It turns out that if \ref{ass:b.2} is satisfied such limit exists and equals $C^{*,\Lambda}_S$ \textcolor{red}{(Check this carefully)}.

We must be a bit careful with $M^{B,\Lambda}=M^{B} \|\Lambda\|_{1,B}/\|\Lambda\|_{1,B/2}$, since $\|\Lambda\|_{1,B/2}$ appears in the denominator. Still, it easy to see that
\[
\limsup_{r\to\infty}\|\Lambda\|^{-1}_{1,B(0,r)} \leq \limsup_{r\to\infty}\|\lambda^{-1}\|_{1,B(0,r)}\leq\limsup_{r\to\infty}\|\lambda^{-1}\|_{q,B(0,r)}<\infty,
\]
from which it follows that
$
\liminf_{r\to\infty}\|\Lambda\|_{1,B(0,r)}>0$
and in particular
\[
\limsup_{r\to\infty} M^{B(0,r),\Lambda} =: M^{*,\Lambda} <\infty.
\]
Again, it is not hard to show that the limit does not change if we consider balls centered at any $x\in\R^d$.

\begin{remark}Assumption \ref{ass:b.2} implies in particular that given $\delta>0$ for each $x\in\R^d$ there exists $s(x,\delta)$ such that 
\[
C_S^{B(x,r),\Lambda}\leq (1+\delta)C_S^{*,\Lambda},\quad M^{B(x,r),\Lambda}\leq (1+\delta)M^{*,\Lambda},\quad C_P^{B(x,r),\Lambda}\leq (1+\delta)C_P^{*,\Lambda}
\]
for all $r> s(x,\delta)$.
\end{remark}
\section{Estimates for caloric functions}

\subsection{Mean value inequalities for subcaloric functions}
To avoid the same type of technical problems which we faced in \cite[section 2.3]{chiarinideuschel}, we shall assume that our positive subcaloric functions $u$ are locally bounded. It turns out that any positive subcaloric function is locally bounded; this can be proved repeating the argument below with some additional technicalities similar to what we did in the proof of \cite{chiarinideuschel}[Proposition 2.4].

\begin{proposition}\label{prop:premoser} Consider $I=(t_1,t_2)\subset \R$ and a ball $B\subset\R^d$. Let $u$ be a locally bounded positive subcaloric function in $Q=I\times B$. Take cutoffs $\eta\in C_0^\infty(B)$, $0\leq \eta\leq 1$ and  $\zeta : \R \to [0,1]$, $\zeta \equiv 0$ on $(-\infty,t_1]$. Set $\nu = 2-2p^*/\rho$. Then for all $\alpha\geq 1$
	\begin{equation}\label{ineq:moserstep}
	\|\zeta \eta^2 u^{2\alpha}\|^{\nu}_{\nu,I\times B,\Lambda}\lesssim  C_S^{B,\Lambda}\frac{|B|^{\frac{2}{d}}}{|I|^{1-\nu}} \Bigl[\alpha(\|\zeta'\|_\infty+\|\nabla \eta\|^2_{\infty})\Bigr]^{\nu} \| u^{2\alpha}\|_{1,I\times B,\Lambda}^{\nu}.
	\end{equation}
\end{proposition}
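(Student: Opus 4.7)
The plan is to run one step of Moser's iteration machinery, organized as an energy estimate, an application of the weighted Sobolev inequality with cutoff, and a parabolic interpolation.

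\textbf{Step 1 (Caccioppoli energy estimate).} I would test the subcaloric inequality against $\phi = \zeta(t)\,\eta^2(x)\,u^{2\alpha-1}(t,x)$. Since the definition of a subcaloric function only gives differentiability of $t\mapsto(u(t,\cdot),\phi)_\Lambda$ for $\phi$ independent of $t$, this calculation has to be justified through a Steklov-type regularization in time; I would perform the algebra formally and invoke the standard averaging procedure to make it rigorous. Using the chain rule identity $u^{2\alpha-1}\partial_t u = (2\alpha)^{-1}\partial_t(u^{2\alpha})$ together with integration by parts in time from $t_1$ (where $\zeta$ vanishes) up to an arbitrary $\tau\in I$, the time-derivative contribution becomes $(2\alpha)^{-1}\int\zeta(\tau)\eta^2\Lambda u^{2\alpha}(\tau)\,dx$ minus a term bounded by $\|\zeta'\|_\infty$ times a Lebesgue average of $u^{2\alpha}$. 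For the Dirichlet form piece, I would expand $\nabla\phi$, use the identities $u^{2\alpha-2}\langle a\nabla u,\nabla u\rangle=\alpha^{-2}\langle a\nabla u^\alpha,\nabla u^\alpha\rangle$ and $u^{2\alpha-1}\nabla u=\alpha^{-1}u^\alpha\nabla u^\alpha$, and absorb the $\nabla\eta$ cross term via Cauchy-Schwarz with $\varepsilon=(2\alpha-1)/(2\alpha)$ so that at least half of the good gradient term is preserved. Multiplying through by $2\alpha$ and passing to the supremum over $\tau\in I$, the outcome is the parabolic Caccioppoli inequality
\[
\sup_{\tau\in I}\int\zeta(\tau)\eta^2\Lambda u^{2\alpha}(\tau)\,dx+\int_I\zeta\,\E_\eta(u^\alpha,u^\alpha)\,dt\lesssim\alpha\bigl(\|\zeta'\|_\infty+\|\nabla\eta\|_\infty^2\bigr)\,|I||B|\,\|u^{2\alpha}\|_{1,I\times B,\Lambda}.
\]

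\textbf{Step 2 (Sobolev with cutoff in space, integrated in time).} For each $t\in I$ I would apply the weighted Sobolev inequality with cutoff \eqref{eq:localweightedsobolev_cutoff} to $\zeta^{1/2}(t)\,\eta\, u^\alpha(t,\cdot)$. Because $\zeta^{1/2}$ depends only on time, $\E_\eta(\zeta^{1/2}u^\alpha,\zeta^{1/2}u^\alpha)=\zeta\,\E_\eta(u^\alpha,u^\alpha)$; integrating the resulting inequality in $t$ and inserting the Caccioppoli bound of Step 1 yields
\[
\int_I\|\zeta^{1/2}\eta u^\alpha\|_{\rho/p^*,B,\Lambda}^2\,dt\lesssim C_S^{B,\Lambda}\,|B|^{2/d}\,\alpha\bigl(\|\zeta'\|_\infty+\|\nabla\eta\|_\infty^2\bigr)\,|I|\,\|u^{2\alpha}\|_{1,I\times B,\Lambda}.
\]

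\textbf{Step 3 (parabolic interpolation).} Set $g^2=\zeta\eta^2 u^{2\alpha}$. The exponent $\nu=2-2p^*/\rho$ is precisely the one for which H\"older's inequality in the spatial variable gives, at each fixed $t$, the bound $\|g^2\|_{\nu,B,\Lambda}^\nu\leq \|g\|_{\rho/p^*,B,\Lambda}^2\,\|g\|_{2,B,\Lambda}^{2(\nu-1)}$, since $\nu-1=1-2p^*/\rho$ and the identity $(\nu-1)+2p^*/\rho=1$ makes the powers of $|B|$ balance exactly when converting between absolute and averaged norms. Integrating in $t$, estimating the $L^\infty_t L^2_x(\Lambda)$ norm of $g$ by the Caccioppoli bound from Step 1 and the $L^1_t L^{\rho/p^*}_x(\Lambda)$ norm of $g$ by Step 2, I obtain the claimed inequality \eqref{ineq:moserstep}.

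\textbf{Main obstacle.} The delicate point is not any individual estimate but the rigorous justification of the test-function manipulation in Step 1: a nonlinear function of $u$ is being plugged into the weak formulation, so one needs a Steklov time regularization and must verify that the boundary term at the running time $\tau$ is retained with the correct sign while the one at $t_1$ is killed by $\zeta(t_1)=0$. Tracking the dependence on $\alpha$ through the Cauchy-Schwarz absorption also needs some care; however, since the statement allows the factor $\alpha^\nu$ with $\alpha\geq 1$, any $O(\alpha)$ bound in the Caccioppoli estimate is more than enough to close the argument.
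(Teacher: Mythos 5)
Your proposal is correct and follows essentially the same three-stage structure as the paper's proof: a parabolic Caccioppoli estimate for $u^\alpha$, the weighted cutoff Sobolev inequality \eqref{eq:localweightedsobolev_cutoff}, and the interpolation $\|v\|_{\nu}^{\nu}\le\|v\|_{\rho/(2p^*)}\|v\|_{1}^{\nu-1}$ applied to $v=(\zeta^{1/2}\eta u^\alpha)^2$, with the two factors then fed the $L^\infty_t L^2_x$ and integrated-energy halves of the Caccioppoli bound. The only stylistic difference is in Step 1: where you propose testing directly with the time-dependent function $\zeta\eta^2 u^{2\alpha-1}$ and invoking Steklov regularization, the paper instead isolates the time-differentiation issue in the separate Lemma \ref{lemma:parabolic} (applied with $F(x)=|x|^{2\alpha}$ and a time-independent spatial test function $\eta^2$) and only afterwards multiplies by $\zeta(t)$ and integrates; both routes yield the same inequality \eqref{ineq:Moser_step}.
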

\begin{proof} Since $u_t>0$ is locally bounded, the power function $F:
	\R\to \R$ defined by $F(x)= |x|^{2 \alpha}$ with $\alpha\geq 1$ satisfies the assumptions of Lemma \ref{lemma:parabolic}. Thus, for $\eta\in C^\infty_0(B)$ as above we have
\begin{equation}\label{eq:ca}
\frac{d}{dt} ( u_t^{2\alpha},\eta^2 )_{\Lambda} + 2\alpha\,\E(u_t,u_t^{2\alpha-1}\eta^2)\leq0,\quad t\in I.
\end{equation}
We can estimate
\begin{align*}
\E(u_t,u_t^{2\alpha-1}\eta^2) &= 2\int \eta u_t^{2\alpha-1} \langle a \nabla u_t,\nabla \eta \rangle\,dx+ (2\alpha-1)\int \eta^2 u_t^{2\alpha-2} \langle a \nabla u_t,\nabla u_t \rangle\,dx\\
&\geq \frac{2\alpha-1}{\alpha^2}  \,\E_{\eta}(u^\alpha,u^\alpha )-\frac{2\|\nabla \eta\|_\infty}{\alpha}\,\E_{\eta}(u^\alpha,u^\alpha )^{1/2} \|1_B u^{2\alpha}\|^{1/2}_{1,\Lambda},
\end{align*}
by means of Young's inequality $2ab\leq (\epsilon a^2+b^2/\epsilon)$ with $a=\E_{\eta}(u^\alpha,u^\alpha )^{1/2}$ and $b=\|\nabla \eta\|_\infty\|1_B u^{2\alpha}\|^{1/2}_{1,\Lambda}$ and for $\epsilon = 1/2\alpha$, we get exploiting that $\alpha\geq 1$
\[
\E(u_t,u_t^{2\alpha-1}\eta^2)\geq (1/2\alpha) \E_{\eta}(u^\alpha,u^\alpha ) - 2 \|\nabla \eta\|_\infty^2\|1_B u^{2\alpha}\|_{1,\Lambda}.
\]
Going back to \eqref{eq:ca} we have
\[
\frac{d}{dt} \| (u_t^\alpha\eta)^2 \|_{1,\Lambda}+\E_\eta(u_t^\alpha,u_t^\alpha)\leq 4\alpha \|\nabla \eta\|_\infty^2\|1_B u^{2\alpha}\|_{1,\Lambda}
\]
We now take a smooth cutoff in time $\zeta : \R \to [0,1]$, $\zeta \equiv 0$ on $(-\infty,t_1]$, where $I=(t_1,t_2)$. We multiply the inequality above by $\zeta$ and integrate in time. This yields
\begin{equation*}
\zeta(t)\| (u_t^\alpha\eta)^2 \|_{1,\Lambda}+\int_{t_1}^t \zeta(s)\E_\eta(u_s^\alpha,u_s^\alpha)\,ds\\ \leq 4\alpha \Bigl[\|\zeta'\|_\infty+\|\nabla \eta\|^2_{\infty}\Bigr]\int_{t_1}^{t}\| 1_B u_s^{2\alpha} \|_{1,\Lambda}\,ds,
\end{equation*}
after averaging and taking the supremum for $t\in I$ we get
\begin{equation}\label{ineq:Moser_step}
\sup_{t\in I}\zeta(t)\| (\eta u_t^\alpha)^2 \|_{1,B,\Lambda}+\int_I \zeta(s)\frac{\E_\eta(u_s^\alpha,u_s^\alpha)}{|B|}\,ds \lesssim \alpha \Bigl[\|\zeta'\|_\infty+\|\nabla \eta\|^2_{\infty}\Bigr]\int_I\| u_s^{2\alpha} \|_{1,B,\Lambda}\,ds.
\end{equation}

We use \eqref{ineq:Moser_step} together with \eqref{eq:localweightedsobolev_cutoff} to get \eqref{ineq:moserstep}. Observe that $\nu = 2-2p^*/\rho$ is greater than one, since $\rho>2p^*$ by the condition $1/p+1/q<2/d$.
	Using H\"older's inequality and some easy manipulation 
	\[
	\|(\eta u_s^{\alpha})^2\|^\nu_{\nu,B,\Lambda}\leq \|\eta u_s^{\alpha}\|^{2}_{\rho/p^*,B,\Lambda}\|(\eta u_s^{\alpha})^2\|^{\nu-1}_{1,B,\Lambda},
	\]
	we can then integrate this inequality against $\zeta(s)^\nu$ over $I$ and obtain
	\[
	\frac{1}{|I|}	\int_I\zeta(s)^\nu\|\eta^2 u_s^{2\alpha}\|^\nu_{\nu,B,\Lambda}\,ds\leq\Bigl(\sup_{s\in I}\zeta(s)\|(\eta u_s^{\alpha})^2\|_{1,B,\Lambda}\Bigr)^{\nu-1} \frac{1}{|I|}\int_I \zeta(s)\|\eta u_s^{\alpha}\|^{2}_{\rho/p^*,B,\Lambda}\,ds.
	\]
	In view of the Sobolev inequality \eqref{eq:localweightedsobolev_cutoff} we have
	\[
	\|\eta u_s^\alpha\|_{\rho/p^*,B,\Lambda}^2\lesssim C_{S}^{B,\Lambda} |B|^{\frac{2}{d}} \Bigl[\frac{\E_\eta(u_s^\alpha,u_s^\alpha)}{|B|}+\|\nabla \eta\|_\infty^2\| u_s^{2\alpha}\|_{1,B,\Lambda}\Bigr].
	\]
	by \eqref{ineq:Moser_step} we can bound each of the two factors. We end up with the following iterative step
	\begin{equation*}
	\|\zeta \eta^2 u^{2\alpha}\|^{\nu}_{\nu,I\times B,\Lambda}\lesssim  C_S^{B,\Lambda}\frac{|B|^{\frac{2}{d}}}{|I|^{1-\nu}} \Bigl[\alpha(\|\zeta'\|_\infty+\|\nabla \eta\|^2_{\infty})\Bigr]^{\nu} \| u^{2\alpha}\|_{1,I\times B,\Lambda}^{\nu},
	\end{equation*}
	which is what we wanted to prove.
\end{proof} 
	
	The main idea is to use Moser's iteration technique on a sequence of parabolic balls; Proposition \ref{prop:premoser} with suitable choice of the cutoffs and of the parameter $\alpha$ is the iteration step. Fix a parameter $\tau>0$, let $x\in \R^d$, and $r>0$. Consider also a parameter $\delta\in (0,1)$. Then we define the parabolic balls
	\begin{align*}
	Q(\tau,x,s,r)=Q&=(s-\tau r^2,s)\times B(x,r)\\
	Q_{\delta}&= (s-\delta\tau r^2,s)\times B(x,\delta r)
	\end{align*}
	Clearly $Q_\delta\subset Q$ for all $\delta\in(0,1)$.
	
	\begin{theorem}\label{thm:mvi} Fix $\tau>0$ and let $1/2\leq\sigma'<\sigma\leq 1$. Assume that $1/p+1/q<2/d$ and let $u_t$ be a positive subcaloric function on $Q=Q(\tau,x,s,r)$. Then there exists a positive constant $C_1:=C_1(d,p,q)$ such that 
\begin{equation}\label{ineq:max}
					\sup_{Q_{\sigma'}} u(t,z)\leq C_1  (C_S^{B,\Lambda})^{\frac{1}{2\nu-2}} \tau^{\frac{1}{2}}\Biggl[ \frac{1+\tau^{-1}}{ (\sigma-\sigma')^2}\Biggr]^{\frac{\nu}{2\nu-2}} \| u\|_{2,Q_{\sigma},\Lambda},					\end{equation}
where $\nu = 2-2p^*/\rho$.
	\end{theorem}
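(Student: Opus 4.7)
The plan is the classical Moser iteration built on Proposition \ref{prop:premoser}. The hypothesis $1/p+1/q<2/d$ translates into $\nu = 2 - 2p^*/\rho > 1$, so the map $\alpha \mapsto \nu\alpha$ is a genuine expansion and each application of the one-step estimate upgrades $L^{2\alpha}(\Lambda)$ control to $L^{2\nu\alpha}(\Lambda)$ control on a slightly smaller parabolic cylinder. Iterating and letting the exponent tend to infinity will deliver the sup-bound.

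Concretely, I would set $\sigma_k := \sigma' + (\sigma-\sigma')2^{-k}$, so that $\sigma_0 = \sigma$ and $\sigma_k \downarrow \sigma'$, and work with the nested cylinders $Q^{(k)} := Q_{\sigma_k}$. For each $k$, fix smooth cutoffs $\eta_k \in C_0^\infty(B(x, \sigma_k r))$ and $\zeta_k : \R \to [0,1]$ with $\zeta_k \equiv 0$ on $(-\infty, s - \sigma_k\tau r^2]$, chosen so that $\eta_k\zeta_k \equiv 1$ on $Q^{(k+1)}$ and satisfying
\[
\|\nabla\eta_k\|_\infty^2 \lesssim \frac{4^{k}}{(\sigma-\sigma')^2 r^2}, \qquad \|\zeta_k'\|_\infty \lesssim \frac{4^{k}}{\tau(\sigma-\sigma')^2 r^2}.
\]
Applying Proposition \ref{prop:premoser} on $Q^{(k)}$ with $\alpha = \alpha_k := \nu^k$, bounding the LHS from below by the integral over $Q^{(k+1)}$ (the bounded ratio $|Q^{(k)}|/|Q^{(k+1)}|$ is harmless since $\sigma_k \in [1/2,1]$), and then taking first the $\nu$-th and then the $(2\alpha_k)$-th root yields the recursive estimate
\[
\|u\|_{2\nu^{k+1}, Q^{(k+1)}, \Lambda} \le A_k\, \|u\|_{2\nu^{k}, Q^{(k)}, \Lambda}.
\]

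Iterating and using local boundedness of $u$ to identify $\lim_{k\to\infty}\|u\|_{2\nu^{k}, Q^{(k)}, \Lambda} = \sup_{Q_{\sigma'}} u$ gives the desired bound as $\prod_{k\ge 0} A_k \cdot \|u\|_{2, Q_\sigma, \Lambda}$. The infinite product converges since $\sum_k k\nu^{-k} < \infty$. Tracking the exponents, $C_S^{B,\Lambda}$ accumulates the power $\sum_{k\ge 0} 1/(2\nu^{k+1}) = 1/(2\nu-2)$; the cutoff-derivative combination $(1+\tau^{-1})/(\sigma-\sigma')^2$ accumulates $\sum_{k\ge 0} 1/(2\nu^{k}) = \nu/(2\nu-2)$; and the geometric factor $|I_k|^{\nu-1}$ appearing in Proposition \ref{prop:premoser} produces $\tau^{1/2}$ after summation through the identity $\tfrac{\nu-1}{2}\sum_{k\ge 0}\nu^{-(k+1)} = \tfrac{1}{2}$.

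The main obstacle is bookkeeping: one has to verify that the $r$-dependence from $|B_k|^{2/d}$, from $|I_k|^{\nu-1}$, and from the cutoff factor $r^{-2}$ cancel exactly, leaving the bound $r$-independent as claimed in \eqref{ineq:max}, while the $\tau$-dependence collapses to $\tau^{1/2}[(1+\tau^{-1})/(\sigma-\sigma')^2]^{\nu/(2\nu-2)}$. A minor additional point is the justification that $\|u\|_{p, Q^{(k)}, \Lambda}$ converges to the essential supremum as $p\to\infty$, which follows from local boundedness of $u$ and the local integrability of $\Lambda$ ensured by \ref{ass:b.2}.
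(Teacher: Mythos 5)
Your proposal is correct and takes essentially the same route as the paper: dyadic choice $\sigma_k = \sigma' + 2^{-k}(\sigma-\sigma')$, standard cutoffs with $\|\nabla\eta_k\|_\infty \lesssim 2^k/((\sigma-\sigma')r)$ and $\|\zeta_k'\|_\infty \lesssim 2^{2k}/(\tau(\sigma-\sigma')^2 r^2)$, one-step bound from Proposition \ref{prop:premoser} with $\alpha_k = \nu^k$, and iteration with the exponent bookkeeping $\sum_{k\ge 0} 1/(2\nu^{k+1}) = 1/(2\nu-2)$ for $C_S^{B,\Lambda}$ and $\tau^{\nu-1}$, and $\sum_{k\ge 0} 1/(2\nu^k) = \nu/(2\nu-2)$ for the cutoff bracket. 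The only cosmetic difference is that the paper first passes to the fixed domain $Q_{\sigma'}$ (using $Q_{\sigma'}\subset Q_{\sigma_k}$ and $\sigma_k\in[1/2,1]$) before sending $k\to\infty$, whereas you let the domain shrink along with $k$; both are fine.
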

	\begin{proof} We want to apply \eqref{ineq:moserstep} with a suitable sequence of cutoffs $\eta_k$ and $\zeta_k$. Set
	\[
	\sigma_k = \sigma'+2^{-k}(\sigma-\sigma'),\quad\delta_{k}= 2^{-k-1}(\sigma-\sigma')
	\]
	then $\sigma_k-\sigma_{k+1}= \delta_k$, then consider a cutoff $\eta_k:\R^d\to [0,1]$, such that $\supp \eta_k\subset B(\sigma_k r)$ and $\eta_k\equiv 1$ on $B(\sigma_{k+1}r)$, moreover assume that $\|\nabla \eta\|_{\infty} \leq 2/(r \delta_k)$. Take also a cutoff in time $\zeta:\R\to[0,1]$, $\zeta_k\equiv 1$ on $I_{\sigma_{k+1}}=(s-\sigma_{k+1}\tau r^2,s)$, $\zeta_k\equiv0$ on $(-\infty,s-\sigma_{k}\tau r^2)$ and $\|\zeta'\|_{\infty}\leq 2/( r^2\tau \delta_k)$. Let $\alpha_k = \nu^k$ with $\nu=2-2p^*/\rho$ as above. Then, an application of \eqref{ineq:moserstep} and using the fact that $\alpha_{k+1}=\nu\alpha_k$ yields
	\[
	\|u\|_{2\alpha_{k+1},Q_{\sigma_{k+1}},\Lambda}\leq \Biggl\{c(d) C_S^{B,\Lambda} \tau^{\nu-1}\Bigl[\frac{\alpha_k(1+\tau^{-1}) 2^{2k}}{ (\sigma-\sigma')^2}\Bigr]^\nu\Biggr\}^{\frac{1}{2\alpha_{k+1}}}   \| u\|_{2\alpha_k,Q_{\sigma_k},\Lambda}.
	\]
	where we used the fact that $\sigma_k/\sigma_{k+1}<2$, and that $\sigma_k\in[1/2,1]$.
	This is the starting point for Moser's iteration. Iterating the inequality from $i=0$ up to $k$ we get at the price of a constant $C_1>0$ which depends on $p,q$ and the dimension
	\[
		\|u\|_{2\alpha_{k},Q_{\sigma_{k}},\Lambda}\leq C_1  (C_S^{B,\Lambda})^{\frac{1}{2\nu-2}} \tau^{\frac{1}{2}}\Biggl[ \frac{1+\tau^{-1}}{ (\sigma-\sigma')^2}\Biggr]^{\frac{\nu}{2\nu-2}} \| u\|_{2,Q_{\sigma},\Lambda}.
	\]
	where we exploited the fact that $\sum_{i=0}^{\infty} 1/\alpha_i=\nu/(\nu-1)$ and that $\sum_{i=0}^{\infty} k/\alpha_i<\infty$. From the inequality above we easily get, taking $C_1$ larger if needed, 
	\[
	\|u\|_{2\alpha_{k},Q_{\sigma'},\Lambda}\leq C_1  (C_S^{B,\Lambda})^{\frac{1}{2\nu-2}} \tau^{\frac{1}{2}}\Biggl[ \frac{1+\tau^{-1}}{ (\sigma-\sigma')^2}\Biggr]^{\frac{\nu}{2\nu-2}} \| u\|_{2,Q_{\sigma},\Lambda}.
	\]
			and taking the limit as $k\to \infty$ gives the result
	\[
	\sup_{Q_{\sigma'}} u(t,z)\leq C_1  (C_S^{B,\Lambda})^{\frac{1}{2\nu-2}} \tau^{\frac{1}{2}}\Biggl[ \frac{1+\tau^{-1}}{ (\sigma-\sigma')^2}\Biggr]^{\frac{\nu}{2\nu-2}} \| u\|_{2,Q_{\sigma},\Lambda}.
	\]
	\end{proof}

	\begin{corollary}
	\label{thm:mvi_cor} Fix $\tau>0$ and let $1/2\leq\sigma'<\sigma\leq 1$. Assume that $1/p+1/q<2/d$ and let $u$ be a subcaloric function in $Q=Q(\tau,x,s,r)$. Then there exists  a positive constant $C_2:=C_2(q,p,d)$ which depends only on the dimension and on $p,q$ such that for all $\alpha>0$
	\begin{equation}\label{cor:uplus}
							\sup_{Q_{\sigma'}} u(t,z)\lesssim C_2 2^{\frac{2}{\alpha^2}\frac{\nu}{\nu-1}} (C_S^{B,\Lambda})^{\frac{1}{\alpha\nu-\alpha}}\tau^{\frac{1}{\alpha}} \Biggl[ \frac{1+\tau^{-1}}{ (\sigma-\sigma')^2}\Biggr]^{\frac{\nu}{\alpha\nu-\alpha}} \| u\|_{\alpha,Q_{\sigma},\Lambda},
	\end{equation}

	\end{corollary}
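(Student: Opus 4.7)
The plan is to upgrade the $L^2$-based mean value estimate of Theorem \ref{thm:mvi} to an $L^\alpha$-based one via the classical Bombieri--Giusti / Moser self-improvement trick. Since $u$ being subcaloric implies that $u_+$ is a positive locally bounded subcaloric function, and $\sup u \le \sup u_+$, we may replace $u$ by $u_+$. The principle is to apply Theorem \ref{thm:mvi} between nested cylinders, interpolate the resulting $L^2$ norm as a geometric mean of $L^\infty$ and $L^\alpha$, and iterate to remove the $L^\infty$ dependence.

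For $0<\alpha<2$, set $\theta \defeq (2-\alpha)/2 \in (0,1)$ and $\sigma_k \defeq \sigma' + (\sigma-\sigma')(1-2^{-k})$, so that $\sigma_0=\sigma'$, $\sigma_k\uparrow\sigma$, and $\sigma_{k+1}-\sigma_k=(\sigma-\sigma')2^{-k-1}$. Write $M_k \defeq \sup_{Q_{\sigma_k}} u_+ < \infty$. Theorem \ref{thm:mvi} applied between $\sigma_k$ and $\sigma_{k+1}$ yields
\[
M_k \le \Gamma_k\,\|u_+\|_{2,Q_{\sigma_{k+1}},\Lambda}, \qquad
\Gamma_k \lesssim (C_S^{B,\Lambda})^{\frac{1}{2\nu-2}}\tau^{1/2}\biggl[\frac{4^{k+1}(1+\tau^{-1})}{(\sigma-\sigma')^2}\biggr]^{\frac{\nu}{2\nu-2}}.
\]
The pointwise bound $u_+^2 \le M_{k+1}^{2-\alpha}u_+^\alpha$ on $Q_{\sigma_{k+1}}$ gives $\|u_+\|_{2,Q_{\sigma_{k+1}},\Lambda}^2 \le M_{k+1}^{2-\alpha}\|u_+\|_{\alpha,Q_\sigma,\Lambda}^\alpha$ (using $Q_{\sigma_{k+1}} \subset Q_\sigma$), whence the recursion $M_k \le \Gamma_k M_{k+1}^\theta \|u_+\|_{\alpha,Q_\sigma,\Lambda}^{\alpha/2}$. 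Iterating from $k=0$ to $N-1$ and letting $N\to\infty$: $M_N^{\theta^N}\to 1$ since $\theta^N \to 0$ and $M_N \le \sup_{Q_\sigma} u_+ < \infty$ uniformly; the exponent on $\|u_+\|_{\alpha,Q_\sigma,\Lambda}$ converges to $(\alpha/2)\sum_{i\ge 0}\theta^i = 1$; and $\prod_i \Gamma_i^{\theta^i}$ converges because $\log \Gamma_i$ grows only linearly in $i$, while $\theta^i$ decays geometrically. The geometric sums $\sum_{i\ge 0}\theta^i = 2/\alpha$ and $\sum_{i\ge 0} i\theta^i = \theta/(1-\theta)^2$ produce the exponents $1/(\alpha\nu-\alpha)$ on $C_S^{B,\Lambda}$, $1/\alpha$ on $\tau$, $\nu/(\alpha\nu-\alpha)$ on $(1+\tau^{-1})/(\sigma-\sigma')^2$, and the claimed prefactor $2^{(2/\alpha^2)\nu/(\nu-1)}$ in \eqref{cor:uplus}.

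For $\alpha\ge 2$, one proceeds in a single step: Jensen's inequality on $Q_\sigma$ with respect to the probability measure $\Lambda\,dx\,dt/\int_{Q_\sigma}\Lambda\,dx\,dt$ gives $\|u_+\|_{2,Q_\sigma,\Lambda} \le \|\Lambda\|_{1,B}^{1/2-1/\alpha}\|u_+\|_{\alpha,Q_\sigma,\Lambda}$; substituting into Theorem \ref{thm:mvi} and bounding $\|\Lambda\|_{1,B} \le \|\Lambda\|_{p,B}$ allows one to absorb this factor into a power of $C_S^{B,\Lambda}$ up to a constant depending only on $p,q,d$. The main technical obstacle is checking that the telescoping exponents after iteration match \eqref{cor:uplus} exactly, in particular that the linear-in-$i$ contribution to $\log\Gamma_i$ produces precisely the prefactor $2^{(2/\alpha^2)\nu/(\nu-1)}$; once this combinatorics is in place the argument is a mechanical application of the Bombieri--Giusti scheme.
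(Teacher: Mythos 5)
Your proposal follows exactly the route the paper intends: the paper's proof is a one-line pointer to Theorem~2.2.3 of Saloff-Coste (the Moser self-improvement iteration, together with the remark that $\alpha\ge 2$ is handled by Jensen), and your argument is precisely that scheme spelled out for parabolic cylinders $Q_\sigma$ in place of balls. The two-case split, the nested radii $\sigma_k$, the interpolation $u_+^2\le M_{k+1}^{2-\alpha}u_+^\alpha$, and the telescoping of $\prod_k\Gamma_k^{\theta^k}$ with $\theta=1-\alpha/2$ are the standard ingredients being referenced, so this is the same proof, only made explicit.

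One small caveat worth flagging: the norms $\|\cdot\|_{\alpha,Q_\sigma,\Lambda}$ are averaged over $|Q_\sigma|$, so the monotonicity $\|u_+\|_{\alpha,Q_{\sigma_{k+1}},\Lambda}\le\|u_+\|_{\alpha,Q_\sigma,\Lambda}$ you invoke only holds up to the volume ratio $(\sigma/\sigma_{k+1})^{d+1}\le 2^{d+1}$ (bounded since $\sigma_{k+1},\sigma\in[1/2,1]$), which is harmlessly absorbed into $C_2(p,q,d)$; similarly, in the $\alpha\ge 2$ branch the factor $\|\Lambda\|_{1,B_\sigma}^{1/2-1/\alpha}$ from Jensen is a $B$-dependent quantity, not directly a power of $C_S^{B,\Lambda}$, so that step is rougher than stated --- but the paper is equally cavalier there, and in the application (Bombieri--Giusti inside the Harnack proof) only $\alpha\in(0,\alpha_0\nu^{-1})\subset(0,1)$ is used, so the exact constant and the $\alpha\ge 2$ branch are not load-bearing.
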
 
	\begin{proof}To prove \eqref{cor:uplus} one can follow the same approach in \cite{saloff2002aspects}[Theorem 2.2.3] with the only difference that we will consider parabolic balls $Q_\sigma$ instead of balls. Observe that for $\alpha>2$ this is just an application of Jensen's inequality.
	\end{proof}
	Observe that \eqref{cor:uplus} is not good for the application of Bombieri-Giusti's lemma (\ref{lemma:BombieriGiusti}) since $2^{\frac{2}{\alpha}\frac{\nu}{\nu-1}}$ is exploding as $\alpha$ approaches zero. To get rid of this problem we develop in the next section the same type of inequalities for supercaloric functions.

	Theorem \ref{thm:mvi} can  be also applied to obtain a global on-diagonal heat kernel upper bound, as it is done in the next proposition.

	\begin{proposition} Let $f\in L^2(\R^d,\Lambda dx)$, and assume that \ref{ass:b.1} and \ref{ass:b.2} are satisfied, then there exists a constant $C_3=C_3(q,p,d,C_S^{*,\Lambda})>0$ such that for all $x\in \R^d$ and $t>0$ the following inequality holds
	\[
	 P_t f(x)\leq C_3 t^{-\gamma}(s(0,1) + |x| + \sqrt{t})^{\gamma-d/2} \int_{\R^d}(s(0,1) + |y| + \sqrt{t})^{\gamma-d/2} |f(y)|\Lambda(y)dy.
	\]
	where $\gamma$ was defined in \ref{ineq:nash_weighted} and $s(x,\delta)$ was defined in Section \ref{subsec:constants}.
	\end{proposition}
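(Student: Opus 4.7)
The plan is to derive an $L^2(\R^d,\Lambda dx)\to L^\infty$ estimate for $P_t$ using the parabolic mean value inequality Theorem \ref{thm:mvi}, then upgrade it to a pointwise kernel bound by the semigroup identity plus Cauchy-Schwarz, and finally integrate against $|f|$.

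By positivity of $P_t$ one may assume $f\geq 0$. Fix $x\in\R^d$, $t>0$ and set
\[
R\defeq s(0,1)+|x|+\sqrt t,\qquad \tau\defeq t/R^2\in(0,1].
\]
The function $u(s,z)=P_sf(z)$ is a nonnegative caloric function on the parabolic ball $Q=Q(\tau,x,t,R)$, so I would apply Theorem \ref{thm:mvi} with $\sigma=1,\sigma'=1/2$. The $L^2_\Lambda$-contractivity of the semigroup gives
\[
\|u\|_{2,Q,\Lambda}^2\leq \frac{1}{\tau R^2|B(x,R)|}\int_{0}^{t}\|P_sf\|_{2,\Lambda}^2\,ds\leq \frac{\|f\|_{2,\Lambda}^2}{|B(x,R)|}.
\]
The concentric-ball inequality from Section \ref{subsec:constants}, combined with the choice $R\geq s(0,1)+|x|$, yields $C_S^{B(x,R),\Lambda}\lesssim C_S^{*,\Lambda}$ uniformly in $x,t$. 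Since $R\geq\sqrt t$ one has $1+\tau^{-1}\leq 2R^2/t$, so
\[
\tau^{1/2}\Bigl(\tfrac{1+\tau^{-1}}{(1/2)^2}\Bigr)^{\nu/(2\nu-2)}\lesssim (R^2/t)^{1/(2(\nu-1))}.
\]
A direct algebraic manipulation of the definitions of $\rho$, $\nu$ and $\gamma$ gives the identity $\gamma=1/(\nu-1)$; plugging everything into \eqref{ineq:max} and using $|B(x,R)|^{-1/2}\sim R^{-d/2}$ collapses the exponents to
\[
P_tf(x)\lesssim (C_S^{*,\Lambda})^{\gamma/2}\,t^{-\gamma/2}\,R^{\gamma-d/2}\,\|f\|_{2,\Lambda}.
\]

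Reading this off for arbitrary nonnegative $f\in L^2(\R^d,\Lambda dx)$ amounts to $\|p_t(x,\cdot)\|_{2,\Lambda}\lesssim t^{-\gamma/2}(s(0,1)+|x|+\sqrt t)^{\gamma-d/2}$, and by symmetry of the kernel the same bound holds with $y$ in place of $x$. The semigroup identity $p_t(x,y)=\int p_{t/2}(x,z)p_{t/2}(z,y)\Lambda(z)\,dz$ and Cauchy-Schwarz then yield
\[
p_t(x,y)\lesssim t^{-\gamma}(s(0,1)+|x|+\sqrt t)^{\gamma-d/2}(s(0,1)+|y|+\sqrt t)^{\gamma-d/2},
\]
and integrating against $|f(y)|\Lambda(y)dy$ (together with $P_tf(x)\leq P_t|f|(x)$) finishes the proof.

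The main obstacle is the bookkeeping of exponents: the factors $\tau^{1/2}$, $(1+\tau^{-1})^{\nu/(2\nu-2)}$ and $(C_S^{B(x,R),\Lambda})^{1/(2\nu-2)}$ produced by \eqref{ineq:max}, together with the volume factor $|B(x,R)|^{-1/2}$, must fit together to produce exactly $t^{-\gamma/2}R^{\gamma-d/2}$. The identity $\gamma=1/(\nu-1)$ combined with the choice $\tau=t/R^2$ is precisely what makes this work; the spatial profile $R=s(0,1)+|x|+\sqrt t$ is dictated by the twin needs of keeping $\tau\leq 1$ (so the parabolic ball fits in $(0,t)$) and of keeping $C_S^{B(x,R),\Lambda}$ bounded uniformly in $x$ via the concentric-ball estimate in Section \ref{subsec:constants}.
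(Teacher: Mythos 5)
Your proof is correct and follows essentially the same route as the paper: apply the mean value inequality \eqref{ineq:max} with the radius $R\sim s(0,1)+|x|+\sqrt t$ and $\tau\sim t/R^2$, use $L^2_\Lambda$-contractivity and the identity $\gamma=1/(\nu-1)$ to obtain $P_tf(x)\lesssim t^{-\gamma/2}R^{\gamma-d/2}\|f\|_{2,\Lambda}$, then upgrade to an $L^1\to L^\infty$ bound via the semigroup property. The paper packages the last step as the operator-norm duality $\|b_t^{-1}P_t\|_{2\to\infty}\lesssim t^{-\gamma/2}\Rightarrow\|P_tb_t^{-1}\|_{1\to2}\lesssim t^{-\gamma/2}$, whereas you read it off as $\|p_t(x,\cdot)\|_{2,\Lambda}\lesssim t^{-\gamma/2}b_t(x)$ and apply kernel symmetry with Cauchy--Schwarz --- these are the same argument in dual form.
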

	\begin{proof} Assume that $\tau\in (0,2]$, $x=0$ and $r>0$, $s=\tau r^2$, $\sigma=1$ and $\sigma'=1/2$. It follows that
	\[
	Q_{1} = (0,\tau r^2)\times B(0,r),\quad Q_{1/2} = \tau r^2(1/2 ,1)\times B(0,r/2).
	\] 
	We chose $r = s(0,1) + 2|z| + \sqrt{t}$ where $s(0,1)$ was defined in Section \ref{subsec:constants}. In this way $C_S^{B,\Lambda}\leq 2 C_S^{*,\Lambda}$ and we can read inequality \eqref{ineq:max} for $u(s,z):= P_s f(z)$ as follows
	\[
	\sup_{Q_{1/2}} P_s f(z)\leq c (C_S^{*,\Lambda})^{\gamma/2} \frac {\tau^{-\gamma/2}}{r^{d/2}}  \| f\|_{2,\Lambda},
	\]
	with $c = c(p,q,d)$ changing throughout the proof.
	By definition of $r$ we find $\tau\in(0,2]$ such that $3/4\tau r^2 = t$, and in particular $(t,z)\in Q_{1/2}$. This gives
	\[
	 P_t f(z) \leq c t^{-\gamma/2} (s(0,1) + |z| + \sqrt{t})^{\gamma-d/2} \| f\|_{2,\Lambda}.
	\]
	and this holds for all $z\in \R^d$ and $t>0$. Set $b_t(z)=(s(0,1) + |z| + \sqrt{t})^{\gamma-d/2}$.
	It follows that
	\[
	 \|b_t^{-1} P_t f\|_{\infty} \leq c t^{-\gamma/2} \| f\|_{2,\Lambda},
	\]
	from which we deduce that 
	 $\|b_t^{-1} P_t \|_{2\to\infty} \leq c t^{-\gamma/2}$. And by duality we get $\|P_t b_t^{-1}\|_{1\to 2} \leq c t^{-\gamma/2}$. Hence
	 \[
	 \|P_t f\|_{2,\Lambda}\leq c t^{-\gamma/2}\|b_t f\|_{1,\Lambda}
	 \]
	Now it is left to use the semigroup property and standard techniques to finally get the bound.
  	\end{proof}

	It is now standard to get global on-diagonal estimates for the kernel $p_t(x,y)$ of the semigroup $P_t$ associated to $(\E,\F^\Lambda)$ on $L^2(\R^d,\Lambda dx)$. Namely we obtain that for almost all $x,y\in \R^d$ and for all $t>0$
\begin{equation}\label{ineq:ondiagonal}
p_t(x,y)\leq C_3 t^{-\gamma}(s(0,1) + |x| + \sqrt{t})^{\gamma-d/2} (s(0,1) + |y| + \sqrt{t})^{\gamma-d/2}.
\end{equation}
	\subsection{Mean value inequalities for supercaloric functions}

	\begin{theorem}\label{thm:mvisuper} Fix $\tau>0$ and let $1/2\leq\sigma'<\sigma\leq 1$. Assume that $1/p+1/q<2/d$ and let $u_t$ be a positive supercaloric function of on $Q=Q(\tau,x,s,r)$. Then there exists a positive constant $C_4:=C_4(p,q,d)$ which depends only on the dimension and on $p,q$ such that for all $\alpha\in(0,\infty)$
	\begin{equation}
						\sup_{Q_{\sigma'}} u(t,z)^{-\alpha}\leq   C_4(C_S^{B,\Lambda})^{\frac{1}{\nu-1}}\tau \Biggl[ \frac{1+\tau^{-1}}{ (\sigma-\sigma')^2}\Biggr]^{\frac{\nu}{\nu-1}} \| u^{-1}\|^\alpha_{\alpha,Q_{\sigma},\Lambda}.
	\end{equation}
	where $\nu = 2-2p^*/\rho$.
	\end{theorem}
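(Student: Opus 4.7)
The plan is to reduce this supercaloric estimate to the subcaloric mean value inequality already proved in Corollary~\ref{thm:mvi_cor}. The pivotal observation is that, whenever $u$ is positive supercaloric on $Q$, the function $w \defeq u^{-\alpha}$ is positive \emph{sub}caloric on $Q$ for every $\alpha>0$. Once this is established, I will apply Corollary~\ref{thm:mvi_cor} to $w$ with the exponent there taken equal to $1$. This choice is engineered to match the claimed estimate: it makes the bad factor $2^{2\nu/[\alpha^{2}(\nu-1)]}$ of Corollary~\ref{thm:mvi_cor} a harmless absolute constant, and the resulting $L^{1}$-norm $\|w\|_{1,Q_{\sigma},\Lambda}$ coincides with $\|u^{-1}\|_{\alpha,Q_{\sigma},\Lambda}^{\alpha}$ after an elementary rewriting.

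To establish subcaloricity of $w$ I plan to test the defining inequality $\tfrac{d}{dt}(u,\phi)_{\Lambda}+\E(u,\phi)\ge 0$ against the non-negative function $\phi \defeq u^{-2\alpha-1}\psi$, where $\psi \in \F_{B}^{\Lambda}$ is arbitrary and non-negative. Lemma~\ref{lemma:parabolic}, applied to $F(x)=x^{-2\alpha}$, rewrites the time derivative as $-\tfrac{1}{2\alpha}\tfrac{d}{dt}(w^{2},\psi)_{\Lambda}$. For the spatial part, a product-rule expansion of $\nabla(u^{-2\alpha-1}\psi)$ together with the pointwise identity $u^{-\alpha-1}\nabla u = -\alpha^{-1}\nabla w$ yields
\[
\E(u, u^{-2\alpha-1}\psi) \;=\; -\tfrac{2\alpha+1}{\alpha^{2}}\int \psi\,\langle a\nabla w,\nabla w\rangle\,dx \;-\; \tfrac{1}{2\alpha}\E(w^{2},\psi).
\]
Inserting both expressions into the supercaloric inequality and multiplying through by $-2\alpha<0$ gives
\[
\tfrac{d}{dt}(w^{2},\psi)_{\Lambda} + \E(w^{2},\psi) + \tfrac{2(2\alpha+1)}{\alpha}\int \psi\,\langle a\nabla w,\nabla w\rangle\,dx \;\le\; 0,
\]
and the last summand is non-negative (since $\psi\ge 0$ and $a$ is positive semi-definite), so dropping it shows that $u^{-2\alpha}$ is subcaloric. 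Running the same argument with $\alpha/2$ in place of $\alpha$ delivers subcaloricity of $w = u^{-\alpha}$ itself for every $\alpha>0$.

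With $w$ identified as subcaloric, I will invoke Corollary~\ref{thm:mvi_cor} with exponent $1$ to obtain
\[
\sup_{Q_{\sigma'}} w \;\lesssim\; 2^{\tfrac{2\nu}{\nu-1}}(C_{S}^{B,\Lambda})^{\tfrac{1}{\nu-1}}\tau\,\Bigl[\tfrac{1+\tau^{-1}}{(\sigma-\sigma')^{2}}\Bigr]^{\tfrac{\nu}{\nu-1}} \|w\|_{1,Q_{\sigma},\Lambda}.
\]
Since $\sup w = \sup u^{-\alpha}$ and $\|w\|_{1,Q_{\sigma},\Lambda} = |Q_{\sigma}|^{-1}\int_{Q_{\sigma}} u^{-\alpha}\,\Lambda\,ds\,dx = \|u^{-1}\|_{\alpha,Q_{\sigma},\Lambda}^{\alpha}$, this is exactly the claim with $C_{4}$ chosen to absorb the absolute factor $2^{2\nu/(\nu-1)}$ together with the constant of Corollary~\ref{thm:mvi_cor}. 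The main real difficulty will be to make the chain-rule manipulation rigorous in the absence of a positive lower bound on $u$: since $F(x)=x^{-\alpha}$ blows up at $0$, I will first replace $u$ by the strictly positive approximation $u+\varepsilon$ (which remains supercaloric because constants are caloric), carry out the argument for $(u+\varepsilon)^{-\alpha}$, and then send $\varepsilon\downarrow 0$ via monotone convergence on both sides --- exactly the kind of truncation already built into the hypotheses of Lemma~\ref{lemma:parabolic}.
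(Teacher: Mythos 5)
Your argument is correct and reproduces the claimed constants exactly, but it organizes the proof differently from the paper. The paper rederives the Moser iteration step for negative powers from scratch: it applies Lemma~\ref{lemma:parabolic} with $F(x)=-|x|^{-\beta}$, manipulates the resulting differential inequality into the shape of \eqref{ineq:Moser_step} for $u^{-\beta/2}$, plugs in the weighted Sobolev inequality, and then iterates with $\beta_k=\nu^k\alpha$. You instead isolate the structural fact implicit in that computation --- namely that if $u$ is positive supercaloric then $u^{-\alpha}$ is positive subcaloric for every $\alpha>0$, since the extra cross-term $\tfrac{2(2\alpha+1)}{\alpha}\int\psi\langle a\nabla w,\nabla w\rangle\,dx$ has the favourable sign --- and then simply invoke Corollary~\ref{thm:mvi_cor} with exponent $1$. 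Your bookkeeping is correct: at $\alpha=1$ the corollary yields precisely the exponents $\tau$, $(C_S^{B,\Lambda})^{1/(\nu-1)}$ and $[\cdot]^{\nu/(\nu-1)}$ of the statement, the $2$-power collapses to an absolute constant, and $\|w\|_{1,Q_\sigma,\Lambda}=\|u^{-1}\|_{\alpha,Q_\sigma,\Lambda}^\alpha$. The total mathematical content is comparable --- Corollary~\ref{thm:mvi_cor} for sub-$L^2$ exponents already hides the Saloff-Coste interpolation trick, so no iteration is really saved --- but your version is more modular and makes the reusable lemma ``inverse powers of supercaloric functions are subcaloric'' explicit, which the paper does not. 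Two small points to tidy up: Lemma~\ref{lemma:parabolic} as stated requires $F'\ge 0$, so you should apply it with $F(x)=-|x|^{-2\alpha}$ (as the paper does with $-|x|^{-\beta}$), not $F(x)=x^{-2\alpha}$; and since Theorem~\ref{thm:mvi} and hence Corollary~\ref{thm:mvi_cor} are stated for locally bounded subcaloric functions, the role of the $u+\varepsilon$ truncation (which you do mention) is precisely to guarantee local boundedness of $(u+\varepsilon)^{-\alpha}$ before passing to the limit $\varepsilon\downarrow 0$.
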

	\begin{proof} We can always assume that $u>\epsilon$ by considering the supersolution $u+\epsilon$ and then sending $\epsilon$ to zero at the end of the argument. Applying Lemma \ref{lemma:parabolic} with the function $F(x):= -|x|^{-\beta}$ and $\beta>0$ we get
	\[
	-\frac{d}{dt}\|\eta^2 u_t^{-\beta}\|_{1,\Lambda} +\beta\, \E(u_t^{-\beta-1}\eta^2,u_t) \geq 0
	\]
	which after some manipulation gives
	\[
		-\frac{d}{dt}\|\eta^2 u_t^{-\beta}\|_{1,\Lambda} -4\frac{\beta+1}{\beta}\, \E_{\eta^2}(u_t^{-\beta/2},u_t^{-\beta/2}) -4\int a \nabla\eta\cdot\nabla (u_t^{-\beta/2}) \eta u_t^{-\beta/2}dx\geq 0
	\]
	by means of Young's inequality $4ab\leq 3a^2+2b^2/3$ and using the simple fact that $(\beta+1)/\beta > 1$ we get after averaging
	\[
	\frac{d}{dt}\|\eta^2 u_t^{-\beta}\|_{1,B,\Lambda} + \frac{\E_{\eta^2}(u_t^{-\beta/2},u_t^{-\beta/2})}{|B|} \lesssim \|\nabla \eta\|^2_{\infty}\|u^{-\beta}\|_{1,B,\Lambda}
	\]
		We now integrate against a time cutoff $\zeta:\R\to[0,1]$ to obtain something similar to  \eqref{ineq:Moser_step}. Hence the same approach as in Proposition \ref{prop:premoser} applies and we get
		\[
			\|\zeta \eta^2 u^{-\beta}\|^{\nu}_{\nu,I\times B,\Lambda}\lesssim  C_S^{B,\Lambda}\frac{|B|^{\frac{2}{d}}}{|I|^{1-\nu}} \Bigl[\|\zeta'\|_\infty+\|\nabla \eta\|^2_{\infty}\Bigr]^{\nu} \| u^{-\beta}\|_{1,I\times B,\Lambda}^{\nu}.
		\]
		Moser's iteration technique with $\beta_k=\nu^k \alpha$ and $\alpha>0$ and the same argument of Theorem 
		\ref{thm:mvi} will finally give 
	\begin{equation*}
						\sup_{Q_{\sigma'}} u(t,z)^{-\alpha}\leq C_4 (C_S^{B,\Lambda})^{\frac{1}{\nu-1}}  \tau \Biggl[ \frac{1+\tau^{-1}}{ (\sigma-\sigma')^2}\Biggr]^{\frac{\nu}{\nu-1}} \| u^{-1}\|^\alpha_{\alpha,Q_{\sigma},\Lambda}.
	\end{equation*}
	\end{proof}

	We introduce the following parabolic ball. Given $x\in\R^d$, $r,\tau>0$ and $s\in\R$, $\delta\in(0,1)$, we note
	\[
	Q'_\delta = Q'_\delta(\tau,x,s,r) = (s-\tau r^2, s-(1-\delta)\tau r^2)\times B(x,\delta r).
	\]

	\begin{theorem}\label{thm:mvisuper_close} Fix $\tau>0$ and let $1/2\leq\sigma'<\sigma\leq 1$. Assume that $1/p+1/q<2/d$ and let $u$ be a positive supercaloric function on $Q=Q(\tau,x,s,r)$. Fix $0<\alpha_0<\nu$. Then there exists a positive constant $C_5:=C_5(q,p,d,\alpha_0)$ which depends only on the dimension, on $p,q$ and on $\alpha_0$ such that for all $0<\alpha< \alpha_0 \nu^{-1}$ we have
		\begin{equation}
  		 	\|u\|_{\alpha_0,Q'_{\sigma'},\Lambda}\leq  \Biggl\{ C_5 \tau(1+\tau^{-1})^\frac{\nu}{\nu-1}\biggl[\frac{1\vee C_S^{B,\Lambda} }{ (\sigma-\sigma')^2}\biggr]^\frac{\nu}{\nu-1} \Biggr\}^{(1+\nu)(1/\alpha-1/\alpha_0)} \| u\|_{\alpha,Q'_{\sigma},\Lambda}
		\end{equation}
		where $\nu = 2-2p^*/\rho$.
	\end{theorem}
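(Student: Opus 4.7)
The plan is to run Moser's iteration on \emph{positive} powers of $u$, in direct analogy with the negative-power iteration in Theorem~\ref{thm:mvisuper}: each step will trade $L^\beta$ on a cylinder for $L^{\nu\beta}$ on a slightly smaller one, and finitely many steps will carry the exponent from $\alpha$ up to $\alpha_0$.

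The one-step inequality follows from applying Lemma~\ref{lemma:parabolic} with the concave, increasing function $F(x)=x^\beta$, $\beta\in(0,1)$, which gives for a positive supercaloric $u$
\[
\frac{d}{dt}\|u^\beta\eta^2\|_{1,\Lambda} + \beta\,\E(u,\eta^2 u^{\beta-1}) \;\geq\; 0.
\]
Setting $v=u^{\beta/2}$ and expanding the Dirichlet form by Leibniz, the ``good'' term $\E_\eta(v,v)$ appears with the \emph{negative} coefficient $4(\beta-1)/\beta$ (since $\beta<1$); Young's inequality with $\epsilon\simeq(1-\beta)/\beta$ absorbs the cross term and yields the Caccioppoli-type bound
\[
\frac{2(1-\beta)}{\beta}\,\E_\eta(v,v) \;\leq\; \frac{d}{dt}\|v\eta\|^2_{2,\Lambda} + \frac{2\beta}{1-\beta}\,\|\nabla\eta\|_\infty^2\,\|1_B v\|^2_{2,\Lambda}.
\]
Structurally this is the same as in the proof of Theorem~\ref{thm:mvisuper}, except that the sign of $d/dt$ is reversed, so the time cutoff must be taken supported on the \emph{past} portion of the cylinder ($\zeta(t_1)=1$, $\zeta(t_2)=0$) in order to turn the boundary integration by parts into something useful. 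Multiplying by this $\zeta$, integrating in time, and combining with the weighted Sobolev inequality~\eqref{eq:localweightedsobolev_cutoff} via H\"older exactly as in Proposition~\ref{prop:premoser} produces the one-step Moser estimate
\[
\|\zeta\eta^2 u^\beta\|^\nu_{\nu,I\times B,\Lambda} \;\lesssim\; C_S^{B,\Lambda}\,\frac{|B|^{2/d}}{|I|^{1-\nu}}\Bigl[\frac{\|\zeta'\|_\infty+\|\nabla\eta\|_\infty^2}{(1-\beta)^2}\Bigr]^\nu \|u^\beta\|^\nu_{1,I\times B,\Lambda}.
\]

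To iterate, set $\beta_k=\nu^k\alpha$ and let $K$ be the smallest integer with $\beta_K\geq\alpha_0$. Since $\alpha<\alpha_0/\nu$, one has $\beta_{K-1}<\alpha_0<\nu$, hence $\beta_{K-1}<1$, so the singular factor $1/(1-\beta_k)$ remains bounded by a constant depending only on $\alpha_0$ throughout the iteration. Choose nested radii $\sigma=\sigma_0>\sigma_1>\dots>\sigma_K=\sigma'$ with $\sigma_k-\sigma_{k+1}\simeq 2^{-k}(\sigma-\sigma')$ and cutoffs $\eta_k,\zeta_k$ adapted to the cylinders $Q'_{\sigma_k}$ (with the temporal cutoffs vanishing at their future endpoints), exactly as in the proofs of Theorems~\ref{thm:mvi} and~\ref{thm:mvisuper}. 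Plugging these into the one-step inequality and taking $\beta_{k+1}$-th roots gives
\[
\|u\|_{\beta_{k+1},Q'_{\sigma_{k+1}},\Lambda}\;\leq\; A_k^{1/\beta_{k+1}}\,\|u\|_{\beta_k,Q'_{\sigma_k},\Lambda},
\]
with $A_k$ of the form of the braced quantity in the theorem times $2^{2k\nu}$. Chaining the $K$ estimates and applying Jensen's inequality $\|u\|_{\alpha_0,Q'_{\sigma'},\Lambda}\leq \|u\|_{\beta_K,Q'_{\sigma'},\Lambda}$ on the finite-measure cylinder then finishes the argument.

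The main technical obstacle is the constant bookkeeping: one must verify that the exponent sum $\sum_{k=0}^{K-1}1/\beta_{k+1}$ telescopes via a geometric series to a constant times $1/\alpha-1/\alpha_0$, and that the $(1-\beta_k)^{-2\nu}$ and $2^{2k\nu}$ corrections are absorbed into the leading $(1+\nu)$-th power (the convergent series $\sum k\nu^{-k}<\infty$ does the job). The only genuinely new point compared with Theorem~\ref{thm:mvisuper} is the sign reversal of $\partial_t$ in the Caccioppoli inequality, which prevents a direct sup-in-time bound on $\|v_t\eta\|_{2,\Lambda}^2$; the required sup must instead be extracted by running the Caccioppoli inequality backwards from the future endpoint of each intermediate time interval, producing an extra geometric factor that is swallowed by the $2^k$ loss already paid for in the cutoff choices.
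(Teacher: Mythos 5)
Your strategy matches the paper's: run Moser iteration on small positive powers of $u$ using Lemma~\ref{lemma:parabolic} with $F(x)=|x|^\beta$, $\beta\in(0,1)$, absorb the cross term via Young to get a Caccioppoli inequality in which $\partial_t$ enters with the reversed sign, take a time cutoff vanishing at the future endpoint, and feed this into the weighted Sobolev inequality exactly as in Proposition~\ref{prop:premoser}. The resulting one-step estimate, the cylinder bookkeeping and the telescoping of the exponents are all structurally the same as in the paper.

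However, there is a genuine gap in your indexing of the iteration. You set $\beta_k=\nu^k\alpha$, run upward from $\alpha$, and stop at the smallest $K$ with $\beta_K\geq\alpha_0$; the last step therefore takes $\beta_{K-1}\in[\alpha_0/\nu,\alpha_0)$ as input. You then assert ``$\beta_{K-1}<\alpha_0<\nu$, hence $\beta_{K-1}<1$,'' but this implication is false: $\nu\in(1,2)$, so $\alpha_0$ is permitted in $(1,\nu)$, and in that range $\beta_{K-1}$ can land in $(1,\alpha_0)$. (Concretely, $\nu=1.8$, $\alpha_0=1.5$, $\alpha=0.8<\alpha_0/\nu$ gives $\beta_1=1.44\geq 1$.) For $\beta\geq 1$ the power function $F(x)=x^\beta$ is convex, the coefficient $4(\beta-1)/\beta$ of $\E_\eta(u^{\beta/2},u^{\beta/2})$ becomes nonnegative, and the Caccioppoli inequality no longer controls the Dirichlet form from the time derivative and the gradient of the cutoff; in other words the one-step estimate simply does not exist for that $\beta$. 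So the iteration cannot be started from $\alpha$ and pushed past $\alpha_0/\nu$.

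The paper sidesteps this by a small but essential reindexing. It defines the grid $\alpha_i=\alpha_0\nu^{-i}$, and for general $\alpha\in(0,\alpha_0/\nu)$ it first picks $i\geq 2$ with $\alpha_i\leq\alpha<\alpha_{i-1}$, then runs exactly $i$ iteration steps from $\alpha_i$ up to $\alpha_0$, with input powers $\beta_j=\alpha_i\nu^{j-1}$ for $j=1,\dots,i$. Every such $\beta_j$ satisfies $\beta_j\leq\alpha_i\nu^{i-1}=\alpha_0/\nu<1$, so the fixed $\epsilon=1-\alpha_0/\nu>0$ works uniformly as the absorption constant and the one-step estimate is always applicable. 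Jensen's inequality is then used on the \emph{right-hand side} to pass from $\|u\|_{\alpha_i,Q'_\sigma,\Lambda}$ to $\|u\|_{\alpha,Q'_\sigma,\Lambda}$ (legitimate since $\alpha_i\leq\alpha$), and the exponent on the constant is inflated from $1/\alpha_i-1/\alpha_0$ to $(1+\nu)(1/\alpha-1/\alpha_0)$ to absorb the rounding. Your Jensen step on the left-hand side is fine in itself, but it is the wrong place to round: you must round $\alpha$ \emph{down} to the grid at the start, not round $\beta_K$ \emph{up} at the end, because otherwise you are forced through a forbidden exponent.

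Everything else in your proposal is sound: the sign of $\partial_t$, the choice of temporal cutoff, the combination with \eqref{eq:localweightedsobolev_cutoff}, and the observation that $\sum k\nu^{-k}<\infty$ absorbs the $2^{2k}$ losses. Fix the indexing as above and the argument closes.
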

	\begin{proof} Assume $u$ is supercaloric on $Q=I\times B$. Applying Lemma \ref{lemma:parabolic} with the function $F(x):= |x|^{\beta}$ with $\beta\in (0,1) $ we get
		\[
		\frac{d}{dt}\|\eta^2 u_t^{\beta}\|_{1,\Lambda} +\beta\, \E(u_t^{\beta-1}\eta^2,u_t) \geq 0
		\]
		which after some manipulation gives
		\[
			\frac{d}{dt}\|\eta^2 u_t^{\beta}\|_{1,\Lambda} +4\frac{\beta-1}{\beta}\, \E_{\eta}(u_t^{\beta/2},u_t^{\beta/2})+4\int a \nabla\eta\cdot\nabla (u_t^{\beta/2}) \eta u_t^{\beta/2}dx\geq 0
		\]
	Note that $(\beta-1)$ is negative. If we take $0<\beta<\alpha_0 \nu^{-1}$ then we have
	\[
	\frac{1-\beta}{\beta} > 1-\beta > 1-\alpha_0/\nu =:\epsilon,
	\]
 	this yields after Young's inequality 
 	\[
 	-\frac{d}{dt}\|\eta^2 u_t^{\beta}\|_{1,\Lambda}+\epsilon\, \E_{\eta}(u_t^{\beta/2},u_t^{\beta/2})\leq A\|\nabla\eta\|^2_{\infty}\|1_B u^\beta\|_{1,\Lambda},
 	\]
	where $A$ is a constant possibly depending on $q,p,\alpha_0$ and $d$ which will be changing throughout the proof.
 	Here we introduce a difference, the time cutoff $\zeta:\R\to[0,1]$, $\zeta \equiv 0$ on $(t_2,\infty]$, where $I=(t_1,t_2)$, is zero at the top of the time interval and not at the bottom. This gives after integrating, 
 	\[
 	\zeta(t) \|\eta^2 u_t^{\beta}\|_{1,\Lambda} + \int_t^{t_2}\zeta(s)\E_{\eta}(u_s^{\beta/2},u_s^{\beta/2})\,ds \leq A\, \Bigl[\|\zeta'\|_\infty+\|\nabla\eta\|^2_{\infty}\Bigr]\int_t^{t_2}\|1_B u^\beta\|_{1,\Lambda}
  	\]
  	which has the same flavor of \eqref{ineq:Moser_step}. Starting from this inequality, and repeating the argument we used for subcaloric functions, we end up with
  		\begin{equation}\label{ineq:calc}
  				\|\zeta \eta^2 u^{\beta}\|^{\nu}_{\nu,I\times B,\Lambda}\leq A\,  C_S^{B,\Lambda}\frac{|B|^{\frac{2}{d}}}{|I|^{1-\nu}} \Bigl[\|\zeta'\|_\infty+\|\nabla \eta\|^2_{\infty}\Bigr]^{\nu} \| u^{\beta}\|_{1,I\times B,\Lambda}^{\nu}.
  			\end{equation}
  		We use now the same iteration argument in Theorem 2.2.5 of \cite{saloff2002aspects}. Namely define $\alpha_i = \alpha_0 \nu^{-i}$. Fix $i\geq 0$ and apply \eqref{ineq:calc} with $\beta_j= \alpha_i \nu^{j-1}$, and $j=1,\dots,i$, then clearly $0<\beta_j<\alpha_0\nu^{-1}$, moreover set $\sigma_0 = \sigma$, $\sigma_j-\sigma_{j+1} = 2^{-j-1}(\sigma-\sigma')$, and fix the usual cutoffs $\eta_k:\R^d\to [0,1]$, such that $\supp \eta_k\subset B(\sigma_k r)$ and $\eta_k\equiv 1$ on $B(\sigma_{k+1}r)$ and $\|\nabla \eta\|_{\infty} \leq 2/(r \delta_k)$. Take also a cutoff in time $\zeta:\R\to[0,1]$, $\zeta_k\equiv 1$ on $I_{\sigma_{k+1}}=(s-\tau r^2,s-(1-\sigma_{k+1})\tau r^2)$, $\zeta_k\equiv0$ on $(s-(1-\sigma_{k})\tau r^2,\infty)$ and $\|\zeta'\|_{\infty}\leq 2/( r^2\tau \delta_k)$. Then we have for all $j=1,\dots,i$
  		\[
  		\|u^{\alpha_i\nu^{j}}\|_{1,Q'_{\sigma_j},\Lambda}\leq A \, C_S^{B,\Lambda} \tau^{\nu-1}\Bigl[\frac{(1+\tau^{-1}) 2^{2j}}{ (\sigma-\sigma')^2}\Bigr]^\nu  \| u^{\alpha_i\nu^{j-1}}\|_{1,Q'_{\sigma_{j-1}},\Lambda}^{\nu},
  		\]
  		which after an iteration from $j=1$ to $j=i$ gives
  		\[
  		  		\|u\|^{\alpha_0}_{\alpha_0,Q'_{\sigma_i},\Lambda}\leq    \Biggl\{A\,C_S^{B,\Lambda} \tau^{\nu-1}\Bigl[\frac{(1+\tau^{-1}) 2^{2(i-k)}}{ (\sigma-\sigma')^2}\Bigr]^\nu \Biggr\}^{\sum_{k=0}^{i-1} \nu^{k}} \| u^{\alpha_i}\|_{1,Q'_{\sigma},\Lambda}^{\nu^i}.
  		  		\]
  		Now observe that
  		\[
  		\sum_{k=0}^{i-1} (i-k) \nu^k\leq C(\nu)(\alpha_0/\alpha_i-1),\quad \sum_{k=0}^{i-1}\nu^k =\frac{\nu^i-1}{\nu-1}=\frac{\alpha_0/\alpha_i-1}{\nu-1}
  		\]
  		where $C(\nu)$ does not depend on $i$. This yields the following inequality
  		\[
  		  		  		\|u\|_{\alpha_0,Q'_{\sigma'},\Lambda}\leq \Biggl\{ A\, \tau(1+\tau^{-1})^\frac{\nu}{\nu-1}\biggl[\frac{1\vee C_S^{B,\Lambda} }{ (\sigma-\sigma')^2}\biggr]^\frac{\nu}{\nu-1} \Biggr\}^{1/\alpha_i-1/\alpha_0} \| u\|_{\alpha_i,Q'_{\sigma},\Lambda}
  		  		  		\]
  		 where the constant $A$ depends only on $\alpha_0, q, p$ and the dimension $d\geq 2$ and can be taken greater than one. Finally we extend the inequality for $\alpha\in(0,\alpha_0 \nu^{-1})$. Let $i\geq 2$ be an integer such that $\alpha_i\leq \alpha<\alpha_{i-1}$, then we have $1/\alpha_i-1/\alpha_0\leq (1+\nu)(1/\alpha-1/\alpha_0)$ and by means of Jensen's inequality we get
  		 \[
  		 	\|u\|_{\alpha_0,Q'_{\sigma'},\Lambda}\leq  \Biggl\{ A\, \tau(1+\tau^{-1})^\frac{\nu}{\nu-1}\biggl[\frac{1\vee C_S^{B,\Lambda} }{ (\sigma-\sigma')^2}\biggr]^\frac{\nu}{\nu-1} \Biggr\}^{(1+\nu)(1/\alpha-1/\alpha_0)} \| u\|_{\alpha,Q'_{\sigma},\Lambda},
  		 \]
  		 which is what we wanted to prove.
	\end{proof}

	\subsection{Mean value inequalities for $\log u_t$}
	
	In this section we get mean value inequalities for $\log u_t$ where $u_t$ is a positive supercaloric function on $Q= (s-\tau r^2, s)\times B(x,r)$, with $\tau>0$ fixed. We denote by $m^\Lambda:=\Lambda dx$ and by $\gamma^\Lambda := dt\times m^\Lambda$. 	
	\begin{theorem}\label{thm:loginequalities} Fix $\tau>0$ and $\kappa\in(0,1)$, $\delta\in[1/2,1)$. For any $s\in\R$ and $r>0$ and any positive supercaloric function $u$ on $Q= (s-\tau r^2, s)\times B(x,r)$, there exist a positive constant $C_6:=C_6(q,p,d,\delta)$  and a constant $k\defeq k(u,\kappa)>0$ such that
	\begin{equation}
	 \gamma^{\Lambda}\{(t,z)\in K^+\, |\,\log u_t<-\ell-k\}\leq C_6\, m^{\Lambda}(B) \Bigl[M^{B,\Lambda}|B|^\frac{2}{d} (C_{P}^{B,\Lambda}\vee \tau^2)\Bigr]\ell^{-1},
	\end{equation}
	and
	\begin{equation}
		 \gamma^{\Lambda}\{(t,z)\in K^-\, |\,\log u_t>\ell-k\}\leq C_6\, m^{\Lambda}(B) \Bigl[M^{B,\Lambda}|B|^\frac{2}{d} (C_{P}^{B,\Lambda}\vee \tau^2)\Bigr]\ell^{-1},
		\end{equation}
		where $K^+=(s-\kappa\tau r^2,s)\times B(x,\delta r)$ and $K^-=(s-\tau r^2,s-\kappa\tau r^2)\times B(x,\delta r)$.
	\end{theorem}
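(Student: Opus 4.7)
The plan is to implement Moser's classical logarithm-of-supersolution lemma in the present weighted cutoff setting. After passing from $u$ to $u+\epsilon$ (and letting $\epsilon\to 0$ at the end) to ensure the logarithm is well-defined, I would test the weak supercaloric inequality against the positive function $\phi = \eta^2/u$, where $\eta$ is the radial cutoff associated with Proposition~\ref{ineq:Poincare_etaweighted}: $\eta\in C_0^\infty(B)$, $\eta\equiv 1$ on $B(x,\delta r)$ and $\|\nabla\eta\|_\infty\lesssim 1/((1-\delta)r)$. Using the chain rule from Lemma~\ref{lemma:parabolic} (with the identity $\nabla u/u = -\nabla w$ for $w:=-\log u$) and absorbing the cross term $2\int\eta\,a\nabla w\cdot\nabla\eta\,dx$ into the Dirichlet form by Young's inequality $2ab\leq a^2/2+2b^2$, one obtains the key energy estimate
\[
\frac{d}{dt}(w,\eta^2)_\Lambda \;\leq\; -\tfrac12\,\E_\eta(w,w) + 2\int\langle a\nabla\eta,\nabla\eta\rangle\,dx,
\]
with the last integral bounded by $\|\nabla\eta\|_\infty^2\,m^\Lambda(B)$.

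Next, I would plug in the weighted Poincar\'e inequality with radial cutoff~\eqref{ineq:Poincare_etaweighted}. Writing $W(t):=(w(t,\cdot))_B^{\Lambda\eta^2}$ for the weighted spatial average, this recasts the energy estimate (after division by $(\eta^2,1)_\Lambda$) as
\[
W'(t) \;\leq\; -\gamma\,\sigma^2(t) + \beta,\qquad \sigma^2(t):=\frac{\int_B (w(t)-W(t))^2\Lambda\eta^2\,dx}{(\eta^2,1)_\Lambda},
\]
with $\gamma^{-1}\asymp M^{B,\Lambda}C_P^{B,\Lambda}|B|^{2/d}$ (coming from Poincar\'e) and $\beta\asymp M^{B,\Lambda}/r^2$ (coming from $\|\nabla\eta\|_\infty^2\,m^\Lambda(B)/(\eta^2,1)_\Lambda$ and the ratio $m^\Lambda(B)/m^\Lambda(B/2)$ built into $M^{B,\Lambda}$). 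Setting $s_+:=s-\kappa\tau r^2$ and introducing the shifted functional $V(t):=W(t)-\beta(t-s_+)$, one has $V'(t)\leq -\gamma\sigma^2(t)\leq 0$, so $V$ is non-increasing; the constant $k:=V(s_+)=W(s_+)$ will play the role of $k(u,\kappa)$ in the statement.

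For the upper-cylinder estimate, monotonicity gives $V(t)\leq k$ for all $t\in(s_+,s)$, so $\{w>\ell+k\}$ is contained in $\{\tilde w-V>\ell-\beta(t-s_+)\}$, with $\tilde w:=w-\beta(t-s_+)$ (note that $\tilde w-V=w-W$). Once $\ell$ exceeds a fixed multiple of $\beta\kappa\tau r^2\asymp M^{B,\Lambda}\tau$, this set lies in $\{\tilde w-V>\ell/2\}$. A naive $L^2$-Chebyshev bound would give only $\ell^{-2}$ times the unbounded quantity $V(s_+)-V(s)$, which must be defeated. To sharpen to $\ell^{-1}$, I would decompose $(s_+,s)$ into successive slabs $(t_j,t_{j+1})$ with $t_0:=s_+$ and $V(t_{j+1})=V(t_j)-\ell/2$. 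On the $j$-th slab the bound $V(t)\leq k-j\ell/2$ forces $\tilde w-V\geq (1+j)\ell/2$ on the bad set, so Chebyshev at each time combined with the dissipation estimate $\int_{t_j}^{t_{j+1}}\sigma^2\,dt\leq(\ell/2)/\gamma$ yields a slab contribution of order $(\eta^2,1)_\Lambda/[\gamma\ell(1+j)^2]$. Summing the convergent series $\sum(1+j)^{-2}$ produces
\[
\gamma^\Lambda\{(t,z)\in K^+:\tilde w-V>\ell/2\} \;\lesssim\; \frac{(\eta^2,1)_\Lambda}{\gamma\ell} \;\lesssim\; \frac{m^\Lambda(B)\,M^{B,\Lambda}C_P^{B,\Lambda}|B|^{2/d}}{\ell},
\]
which is the desired estimate. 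For the complementary regime $\ell\lesssim M^{B,\Lambda}\tau$ I would fall back on the trivial bound $\gamma^\Lambda(K^+)\leq m^\Lambda(B)\kappa\tau r^2$, rewritten as $\lesssim m^\Lambda(B)\,M^{B,\Lambda}|B|^{2/d}\tau^2/\ell$; this accounts for the $\tau^2$ term in the stated maximum $C_P^{B,\Lambda}\vee\tau^2$.

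The lower-cylinder estimate on $K^-$ is the mirror image: for $t\leq s_+$ one has $V(t)\geq k$, and a symmetric dyadic slicing going backward in time from $s_+$ against the level sets $V(t_j)=k+j\ell/2$ reproduces the $\ell^{-1}$ rate. The main obstacle is precisely this dyadic summation against the level sets of the monotone functional $V$: it is what converts the crude $\ell^{-2}$-with-unbounded-prefactor into the sharp $\ell^{-1}$. The remaining difficulties are technical: justifying the chain rule for $\log$ by the $u\mapsto u+\epsilon$ approximation, and the bookkeeping needed to extract $\beta\asymp M^{B,\Lambda}/r^2$ and $\gamma^{-1}\asymp M^{B,\Lambda}C_P^{B,\Lambda}|B|^{2/d}$ using the specific form of the radial cutoff and the definition of $M^{B,\Lambda}$.
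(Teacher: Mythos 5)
Your proposal is correct and follows the paper's outline through the energy estimate, Young's inequality, the weighted Poincar\'e inequality with radial cutoff, the shift $\bar W_t = W_t - \beta(t-s')$, and the choice $k = W(s')$. Where you diverge from the paper is precisely at the point you flag as the main obstacle — converting the $\ell^{-2}$-with-unbounded-prefactor into $\ell^{-1}$. The paper does this in one line: it restricts the Poincar\'e integral to the bad set $D_t^+(\ell)$, obtaining the Riccati-type inequality $\partial_t\bar W_t + C^{-1}(\ell + k - \bar W_t)^2\, m^\Lambda(D_t^+(\ell)) \leq 0$, divides by $(\ell + k - \bar W_t)^2$ to recognize $\partial_t(\ell + k - \bar W_t)^{-1}$, and integrates from $s'$ to $s$ using $\bar W_{s'} = k$ so that the boundary term is exactly $\ell^{-1}$. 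Your dyadic slab decomposition — slicing $(s_+,s)$ by level sets $V(t_j) = k - j\ell/2$, applying Chebyshev with the growing threshold $(1+j)\ell/2$ on each slab, and summing the series $\sum(1+j)^{-2}$ — is a discretized version of the same integral $\int_0^\infty(\ell+s)^{-2}\,ds = \ell^{-1}$ and yields the identical rate. The paper's route is more economical (no slicing, no series); yours is arguably more transparent about why the rate is $\ell^{-1}$ rather than $\ell^{-2}$, and avoids invoking the chain rule for the reciprocal of a merely absolutely continuous function. The two approaches also differ slightly in how they recover the $\tau^2$ term in $C_P^{B,\Lambda}\vee\tau^2$: the paper splits $\ell$ into two halves and applies Markov's inequality to the drift correction, while you split into the regimes $\ell\gtrsim M^{B,\Lambda}\tau$ and $\ell\lesssim M^{B,\Lambda}\tau$ and use the trivial volume bound in the second; both are sound. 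Finally, be aware that the paper passes from the $\eta^2$-weighted integral to an unweighted integral over $\delta B$ (using $\eta \geq 1-\delta$ there) before writing its differential inequality, whereas you keep the $\eta^2$ weight inside $\sigma^2$; this only shifts a factor of $(1-\delta)^{-2}$, which is absorbed into $C_6(q,p,d,\delta)$.
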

	\begin{proof}
	 We follow closely the strategy adopted in Theorem 5.4.1 of \cite{saloff2002aspects}. We can always assume $u_t\geq \epsilon$ and then send $\epsilon$ to zero in our estimates, since $u_t+\epsilon$ is still a supercaloric function. We denote as usual $B:=B(x,r)$. By Lemma \ref{lemma:parabolic}
	\begin{align}\label{eq:meanvalue}
	\frac{d}{dt}(\eta^2,-\log u_t)_{\Lambda} &\leq \E( u_t^{-1} \eta^2, u_t) = -\E_\eta(\log u_t, \log u_t)+2\int \langle a\nabla \eta,\nabla u_t\rangle \eta u_t^{-1} dx\\
	&\leq -\E_\eta(\log u_t, \log u_t)+2 \E_\eta(\log u_t,\log u_t)^{1/2} \|\nabla \eta\|_{\infty} \|1_B\|^{1/2}_{1,\Lambda}\notag\\
	&\leq-\frac{1}{2}\E_\eta(\log u_t, \log u_t)+2  m^\Lambda(B)\|\nabla \eta\|^2_{\infty} \notag
	\end{align}
	in the last inequality we exploit Young's inequality $2 ab \leq (1/2 a^2 + 2 b^2)$. The cutoff function $\eta$ must be on the form used in \eqref{ineq:Poincare_etaweighted}. We take
	\[ 
	\eta(z)\defeq (1-|x-z|/r)_+
	\] 
	where $x,r$ are the center and the radius of the ball $B$. We note
	\[
	w_t(z)\defeq -\log u_t(z),\quad W_t\defeq(w_t)^{\Lambda\eta^2}_B
	\]
	then \eqref{ineq:Poincare_etaweighted} reads
	\[
	\frac{|B|}{\|\eta^2 \Lambda\|_1}\|w_t-W_t\|^2_{2,B,\Lambda\eta^2} \lesssim M^{B,\Lambda}C_P^{B,\Lambda} |B|^\frac{2}{d}\frac{\E(w_t,w_t)}{2\|\eta^2\Lambda\|_1},
	\]
	rewriting \eqref{eq:meanvalue} we get
	\[
	\partial_t W_t+\frac{|B|}{\|\eta^2 \Lambda\|_1}\Bigl(M^{B,\Lambda} C_{P}^{B,\Lambda}|B|^\frac{2}{d}\Bigr)^{-1}\|w_t-W_t\|^2_{2, B,\Lambda \eta^2}\lesssim  \|\nabla \eta\|^2_{\infty} \frac{m^\Lambda(B)}{\|\eta^2 \Lambda\|_1},
	\]
	since $(1-\delta)^2 m^\Lambda( B(x,\delta r))\leq\|\eta^2\Lambda\|_1
	\leq m^\Lambda(B)$ and $\|\nabla \eta\|^2_\infty\lesssim |B|^{-\frac{2}{d}}$ we can write
	\begin{equation}\label{eq:ode}
	\partial_t W_t+\Bigl(m^\Lambda(B) M^{B,\Lambda}C_{P}^{B,\Lambda} |B|^\frac{2}{d}\Bigr)^{-1}\int_{\delta B}|w_t-W_t|^2\,\Lambda dx\leq  c\, M^{B,\Lambda}|B|^{-\frac{2}{d}}
	\end{equation}
	for some constant $c>0$ depending only on the dimension and $\delta$. Observe that we fixed $\delta\in [1/2,1)$ to stay away from the boundary of $B$, that for very large radius $M^{B,\Lambda}$ and $C_P^{B,\Lambda}$ are basically constants and that $m^\Lambda(B)$ is the volume of a ball in $L^2(\R^d,\Lambda dx)$; hence what we have above resembles closely what is given in \cite{saloff2002aspects}. Let us introduce the following auxiliary functions
	\[
		\bar{w}_t\defeq w_t-c\, M^{B,\Lambda}|B|^{-\frac{2}{d}}(t-s'),\quad \bar{W}_t\defeq W_t-c\, M^{B,\Lambda}|B|^{-\frac{2}{d}}(t-s'),
	\]
	where $s'= s - \kappa \tau r^2$. We can now rewrite \eqref{eq:ode} as 
\begin{equation}\label{eq:ode2}
	\partial_t \bar{W}_t+\Bigl(m^\Lambda(B) M^{B,\Lambda}C_{P}^{B,\Lambda} |B|^\frac{2}{d}\Bigr)^{-1}\int_{\delta B}|\bar{w}_t-\bar{W}_t|^2\,\Lambda dx\leq  0.
	\end{equation}
Now set $k(u,\kappa)\defeq \bar{W}_{s'}$ and define the two sets
\[
D^+_t(\ell)\defeq \{z\in  B(x,\delta r)\,|\,\bar{w}(t,z)>k+\ell\},
\]
\[
D^-_t(\ell)\defeq \{z\in B(x,\delta r)\,|\,\bar{w}(t,z)<k-\ell\}.
\]
since $\partial_t\bar{W}_t\leq 0$ we have that, for $t>s'$, $\bar{w}_t-\bar{W}_t> \ell+k(u)-\bar{W}_t\geq \ell$ on $D^+_t(\ell)$. Using this in \eqref{eq:ode2} we obtain
\begin{equation}
\partial_t \bar{W}_t+\Bigl(m^\Lambda(B)M^{B,\Lambda}C_{P}^{B,\Lambda} |B|^\frac{2}{d}\Bigr)^{-1}|\ell+k-\bar{W}_t|^2\,m^\Lambda(D^+_t(\ell)) \leq  0.
\end{equation}
or equivalently
\begin{equation}
-\Bigl(m^\Lambda(B) M^{B,\Lambda}C_{P}^{B,\Lambda} |B|^\frac{2}{d}\Bigr)\partial_t|\ell+k-\bar{W}_t|^{-1}\geq m^\Lambda(D^+_t(\ell)).
\end{equation}
Integrating from $s'$ to $s$ yields, for $\gamma^{\Lambda} = dt\times m^\Lambda$,
\[
 \gamma^{\Lambda}\{(t,z)\in K^+\, |\,\bar{w}(t,z)>k+\ell\}\leq m^{\Lambda}(B) \Bigl(M^{B,\Lambda}C_{P}^{B,\Lambda} |B|^\frac{2}{d}\Bigr)\ell^{-1}
\]
going back to $-\log u_t = \bar{w}_t+c\, M^{B,\Lambda}|B|^{-\frac{2}{d}}(t-s')$ we can rewrite
\[
 \gamma^{\Lambda}\{(t,z)\in K^+\, |\,\log u_t+c\, M^{B,\Lambda}|B|^{-\frac{2}{d}}(t-s')<-k-\ell\}\leq m^{\Lambda}(B) \Bigl(M^{B,\Lambda}C_{P}^{B,\Lambda} |B|^\frac{2}{d}\Bigr)\ell^{-1}.
\]
Finally,
\begin{align*}
 \gamma^{\Lambda}\{(t,z)\in K^+\, &|\,\log u_t<-k(u)-\ell\}\\
 &\leq \gamma^{\Lambda}\{(t,z)\in K^+\,|\,\log u_t+c\, M^{B,\Lambda}|B|^{-\frac{2}{d}}(t-s')<-k-\ell/2\}\\
 &+ \gamma^{\Lambda}\{(t,z)\in K^+\,|\,c\, M^{B,\Lambda}|B|^{-\frac{2}{d}}(t-s')>\ell/2\}\\
 &\lesssim m^{\Lambda}(B) \Bigl(M^{B,\Lambda}C_{P}^{B,\Lambda} |B|^\frac{2}{d}\Bigr)\ell^{-1} + m^{\Lambda}(B) \Bigl(\tau^2 M^{B,\Lambda} |B|^\frac{2}{d}\Bigr)\ell^{-1} \\
 &\lesssim m^{\Lambda}(B)\Bigl[M^{B,\Lambda}|B|^\frac{2}{d} (C_{P}^{B,\Lambda}\vee \tau^2)\Bigr]\ell^{-1}.
\end{align*}
where in the second but last step we used Markov's inequality and the fact that $\kappa<1$. Working with $D^{-}_t(\ell)$ and $K^-$ and using similar arguments proves the second inequality.
\end{proof}

\subsection{Parabolic Harnack's inequality}

We have all the tools to apply Lemma \ref{lemma:BombieriGiusti} effectively to a positive function $u$ which is caloric in the parabolic ball $Q(\tau,s,x,r) = (s-\tau r^2,s)\times B(x,r)$. This will finally gives us the parabolic Harnack's inequality. Fix $\delta\in(0,1)$ and $\tau >0$. For $x\in \R^d$ and $s\in\R$ and $r>0$ denote
\begin{align}\label{definitionsets}
Q_- &= (s-(3+\delta)\tau r^2/4,s-(3-\delta)\tau r^2/4)\times \delta B,\\
Q_-' &= (s-\tau r^2,s-(3-\delta)\tau r^2/4)\times \delta  B,\notag
\\
Q_+&=(s-(1+\delta)\tau r^2/4,s)\times \delta  B.\notag
\end{align}
Then we have the following.
\begin{theorem} Fix $\tau>0$ and $\delta\in[1/2,1)$. Fix $\alpha_0\in(0,\nu)$. Let $u$ be any positive caloric function on $Q= (s-\tau r^2,s)\times B(x,r)$. Then we have
\begin{equation}
\|u\|_{\alpha_0,Q_-',\Lambda}\lesssim C_7 \inf_{Q_+} u(t,z)
\end{equation}
where the constant $C_7$ depends increasingly on $C_S^{B,\Lambda},C_P^{B,\Lambda},M^{B,\Lambda}$, and on $\tau,p,q,\alpha_0, d,\delta$.
\end{theorem}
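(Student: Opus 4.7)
The plan is to combine the reverse Hölder estimates for $u$ and $u^{-1}$ from Theorems \ref{thm:mvisuper_close} and \ref{thm:mvisuper} with the weak $L^1$ bounds on $\log u$ from Theorem \ref{thm:loginequalities}, and to conclude via the Bombieri--Giusti lemma (Lemma \ref{lemma:BombieriGiusti}) on each side separately. A common additive constant $k=k(u,\kappa)$ coming out of the log inequality will tie the two sides together and cancel in the final step.

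Concretely, fix first $\kappa=(3-\delta)/4\in(1/2,1)$ in Theorem \ref{thm:loginequalities}, so that the sets $K^\pm$ from that theorem satisfy $K^-=Q_-'$ and $K^+\supset Q_+$; one obtains a constant $k=k(u,\kappa)$ such that $\log(e^{-k}u)$ is weak-$L^1$ bounded above on $Q_-'$ and $\log(e^{k}u^{-1})$ is weak-$L^1$ bounded above on $K^+$, both with the same prefactor $\mathcal{A}=m^\Lambda(B)\,M^{B,\Lambda}|B|^{2/d}(C_P^{B,\Lambda}\vee\tau^2)$. On the upper side, put $w=e^{-k}u$; since $w$ is positive supercaloric, Theorem \ref{thm:mvisuper_close} furnishes on a nested family of cylinders $\{Q'_\sigma\}_{\sigma\in[1/2,1]}$ suitably placed inside $K^-$ the reverse Hölder inequality
\[
\|w\|_{\alpha_0,Q'_{\sigma'},\Lambda}\lesssim\Bigl[\tfrac{C}{(\sigma-\sigma')^2}\Bigr]^{(1+\nu)(1/\alpha-1/\alpha_0)}\|w\|_{\alpha,Q'_\sigma,\Lambda},\qquad 0<\alpha<\alpha_0\nu^{-1},
\]
which, combined with the weak-$L^1$ bound on $\log w$, is exactly the hypothesis of Lemma \ref{lemma:BombieriGiusti}; its conclusion gives $\|w\|_{\alpha_0,Q_-',\Lambda}\le C$, i.e.\ $\|u\|_{\alpha_0,Q_-',\Lambda}\lesssim e^k$. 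Symmetrically, set $v=e^{k}u^{-1}$; Theorem \ref{thm:mvisuper} applied to $u$ reads $\sup_{Q_{\sigma'}}v^\alpha\lesssim [C/(\sigma-\sigma')^2]^{\nu/(\alpha(\nu-1))}\|v\|^\alpha_{\alpha,Q_\sigma,\Lambda}$ for every $\alpha>0$ on a nested family inside $K^+$, and together with the weak-$L^1$ bound on $\log v$ the sup-version of Lemma \ref{lemma:BombieriGiusti} produces $\sup_{Q_+}v\le C'$, i.e.\ $\inf_{Q_+}u\ge e^{k}/C'$. Multiplying the two bounds eliminates $e^k$ and yields the stated estimate with $C_7=CC'$, increasing in $C_S^{B,\Lambda},C_P^{B,\Lambda},M^{B,\Lambda}$ and depending on $\tau,p,q,\alpha_0,d,\delta$ as claimed.

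The main obstacle is the geometric bookkeeping: one must organize a single dyadic parameter $\sigma\in[1/2,1]$ to index three different families—the $Q'_\sigma$ of Theorem \ref{thm:mvisuper_close}, the $Q_\sigma$ of Theorem \ref{thm:mvisuper}, and the ambient cylinders $K^\pm,Q_-',Q_+$—so that all three estimates are simultaneously available with mutually compatible normalizations, and verify that the explicit $(\sigma-\sigma')^{-2}$ prefactors have the polynomial shape required by Lemma \ref{lemma:BombieriGiusti}. Conceptually the argument is the classical Moser--Bombieri--Giusti scheme, the novelty here being that the prefactors carry the weighted constants $C_S^{B,\Lambda},C_P^{B,\Lambda},M^{B,\Lambda}$ whose dependence on the ball $B$ must be tracked explicitly for the later diffusive limit.
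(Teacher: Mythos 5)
Your plan is exactly the paper's: fix a common $k$ coming out of Theorem~\ref{thm:loginequalities}, feed $e^{\pm k}u$ (via Theorem~\ref{thm:mvisuper_close}) and $e^{\mp k}u^{-1}$ (via Theorem~\ref{thm:mvisuper}) into Lemma~\ref{lemma:BombieriGiusti}, and cancel $e^{k}$ at the end. Your sign convention is $k\to -k$ relative to the paper's definition $k=\bar W_{s'}$, but this is internally consistent and harmless.

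There is, however, one concrete geometric slip that makes the upper-side conclusion land on the wrong set. You choose $\kappa=(3-\delta)/4$ precisely so that $K^-=Q_-'$, and then claim that Bombieri--Giusti applied to a nested family ``inside $K^-$'' gives $\|w\|_{\alpha_0,Q_-',\Lambda}\le C$. But Lemma~\ref{lemma:BombieriGiusti} produces the $L^{\alpha_0}$ bound on the \emph{inner} set $U_\delta$ of the family, while the weak-$L^1$ hypothesis $\gamma(\log w>\ell)\lesssim\gamma(U)\ell^{-1}$ must be furnished on the \emph{outer} set $U=U_1$. With your choice $U_1=K^-=Q_-'$, the lemma delivers a bound only on a strict subset of $Q_-'$, not on $Q_-'$ itself. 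The paper avoids this by taking $\kappa=1/2$, so that $K^-=(s-\tau r^2,s-\tau r^2/2)\times\delta B$ strictly contains $Q_-'$ in time, and then building $U_\sigma=(s-\tau r^2,s-(3-\sigma)\tau r^2/4)\times B(x,\sigma r)$ with $U_\delta=Q_-'$ as the inner set. The fix for your argument is simply to take any $\kappa<(3-\delta)/4$ (e.g.\ $1/2$) and index the cylinders so that $Q_-'$ sits at the \emph{inner} end of the family, with the weak-$L^1$ bound available on the outer end. (On the $K^+\supset Q_+$ side you already have the strict inclusion, so no change is needed there.) Once that is repaired, the remaining steps --- matching the reverse H\"older exponents of Theorems~\ref{thm:mvisuper_close} and~\ref{thm:mvisuper} to the $(\sigma-\sigma')^{-\kappa}$ shape of Lemma~\ref{lemma:BombieriGiusti}, and multiplying the two resulting bounds to eliminate $e^k$ --- coincide with the paper's proof.
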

\begin{proof}
For the proof we follow closely \cite{saloff2002aspects}[Theorem 5.4.2].
Take $k:=k(u,\kappa)$ corresponding to $\kappa = 1/2$ in Theorem \ref{thm:loginequalities}. Set $v=e^{k}u$ and 
\[
U=(s-\tau r^2,s-1/2\tau r^2)\times B(x,r),\quad U_\sigma=(s-\tau r^2,s-(3-\sigma)\tau r^2/4)\times  B(x,\sigma r)
\]
By Theorem \ref{thm:mvisuper_close} it follows that
\[
  		 	\|v\|_{\alpha_0,U_{\sigma'},\Lambda}\leq  \Biggl\{ C_5 \tau(1+\tau^{-1})^\frac{\nu}{\nu-1}\biggl[\frac{1\vee C_S^{B,\Lambda} }{ (\sigma-\sigma')^2}\biggr]^\frac{\nu}{\nu-1} \Biggr\}^{(1+\nu)(1/\alpha-1/\alpha_0)} \| v\|_{\alpha,U_{\sigma},\Lambda}
\]
for all $1/2\leq\sigma'<\sigma\leq 1$ and all $\alpha\in(0,\alpha_0\nu^{-1})$, in particular notice that $\alpha_0\nu^{-1}>\alpha_0/2$ and that  $\alpha_0/2<\nu/2<1$ since $\nu\in(1,2)$. By Theorem \ref{thm:loginequalities} we have that
\[
		 \gamma^{\Lambda}\{(t,z)\in U\, |\,\log v>\ell\}\leq C_6 \, \gamma^{\Lambda}(U) \tau^{-1} \Bigl[M^{B,\Lambda} (C_{P}^{B,\Lambda}\vee \tau^2)\Bigr]\ell^{-1},
\]
Bombieri-Giusti's Lemma \ref{lemma:BombieriGiusti} is applicable and we obtain
\[
	\|e^\kappa u\|_{\alpha_0,Q_-',\Lambda} \lesssim C_{BG}^B
\]
where  $C_{BG}^B$ depends increasingly on $C_S^{B,\Lambda},C_P^{B,\Lambda},M^{B,\Lambda}$, and on $\tau,p,q,\alpha_0,d$. 
On the other hand we can now fix
\[
V=(s-1/2\tau r^2,s)\times B(x,r),\quad V_\sigma=(s-(1+\sigma)\tau r^2/4,s)\times B(x,\sigma r)
\] 
and apply Theorem \ref{thm:mvisuper} to $v=e^{-k} u^{-1}$ where $k$ is the same constant as above, this produces
\[
\sup_{V_{\sigma'}} v(t,z)\leq \Biggl\{C_4(C_S^{B,\Lambda})^{\frac{1}{\nu-1}}\tau \Biggl[ \frac{1+\tau^{-1}}{ (\sigma-\sigma')^2}\Biggr]^{\frac{\nu}{\nu-1}}\Biggr\}^{1/\alpha} \| v\|_{\alpha,V_{\sigma},\Lambda},
\]
for all $\alpha>0$ and $1/2\leq \sigma'<\sigma\leq 1$. Since by Theorem \ref{thm:loginequalities} we have
\[
\gamma^{\Lambda}\{(t,z)\in V\, |\,\log v>\ell\}\leq   C_6 \, \gamma^{\Lambda}(V) \tau^{-1} \Bigl[M^{B,\Lambda} (C_{P}^{B,\Lambda}\vee \tau^2)\Bigr]\ell^{-1},
\]
then Bombieri-Giusti's lemma is applicable and yields
\[
\sup_{Q_+} e^{-\kappa} u^{-1} \lesssim C_{BG}^B
\]
for some $C_{BG}^B$ which we can assume to be the same as before taking the maximum of the two.
 	
Putting the two inequalities together gives the result.
\end{proof}

\begin{theorem}[Parabolic Harnack inequality]  Fix $\tau>0$ and $\delta\in[1/2,1)$. Let $u$ be any positive caloric function in $Q= (s-\tau r^2,s)\times B(x,r)$. Then we have
	\begin{equation}\label{ineq:parabolicharnack}
	\sup_{Q_-} u(t,z)\leq C^{B,\Lambda}_H \inf_{Q_+} u(t,z)
	\end{equation}
	where the constant $C^{B,\Lambda}_H$ depends increasingly on $C_S^{B,\Lambda},C_P^{B,\Lambda},M^{B,\Lambda}$, and on $\tau,p,q,d,\delta$.
\end{theorem}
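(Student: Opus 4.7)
The plan is to mirror the structure of the preceding weak-Harnack-type theorem, swapping the $L^{\alpha_0}$-valued mean value inequality for supercaloric functions (Theorem~\ref{thm:mvisuper_close}) with the $L^\infty$-valued mean value inequality for subcaloric functions (Corollary~\ref{thm:mvi_cor}), and re-running Bombieri--Giusti's lemma with $p_0=\infty$ on the past cylinder to control $\sup_{Q_-}u$ directly. Since the preceding theorem's proof already delivered the symmetric lower estimate $\inf_{Q_+}u\geq (C_{BG}^B)^{-1}e^{-k}$ via Theorem~\ref{thm:mvisuper} together with the $K^+$ log bound of Theorem~\ref{thm:loginequalities}, producing the companion upper bound $\sup_{Q_-}u\leq C_{BG}^B e^{-k}$ with the very same normalising constant $k$ will close the argument once the two estimates are multiplied together and $e^{-k}$ cancels.

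Concretely, I would fix $k=k(u,1/2)$ as in Theorem~\ref{thm:loginequalities} and set $v\defeq e^k u$, which is positive and caloric (in particular subcaloric) on $Q$. Reusing the nested family
\[
U_\sigma=(s-\tau r^2,\,s-(3-\sigma)\tau r^2/4)\times B(x,\sigma r),\qquad\sigma\in[1/2,1],
\]
introduced in the proof of the preceding theorem, so that $U_1\subset Q$ and $Q_-\subset U_\delta$, I would re-run the Moser iteration of Proposition~\ref{prop:premoser} and Corollary~\ref{thm:mvi_cor} with cutoffs supported on this family rather than on the top-aligned $Q_\sigma$ family. This yields, for all $1/2\leq\sigma'<\sigma\leq 1$ and $\alpha>0$, an inequality of the shape
\[
\sup_{U_{\sigma'}}v\,\leq\,\Bigl\{C(\tau,p,q,d)\,\frac{1\vee C_S^{B,\Lambda}}{(\sigma-\sigma')^2}\Bigr\}^{F(\alpha)}\|v\|_{\alpha,U_\sigma,\Lambda},
\]
with $F(\alpha)$ polynomial in $1/\alpha$. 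The accompanying weak measure bound is the one already used in the preceding theorem, coming from Theorem~\ref{thm:loginequalities} with $\kappa=1/2$:
\[
\gamma^\Lambda\{(t,z)\in U\,|\,\log v(t,z)>\ell\}\,\lesssim\,\gamma^\Lambda(U)\,\tau^{-1}\bigl[M^{B,\Lambda}(C_P^{B,\Lambda}\vee\tau^2)\bigr]\,\ell^{-1}.
\]

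These two ingredients are precisely the hypotheses of Bombieri--Giusti's lemma (Lemma~\ref{lemma:BombieriGiusti}) with $p_0=\infty$, applied to $v$ on the nested family $\{U_\sigma\}_{\sigma\in[\delta,1]}$; the lemma then furnishes $\sup_{U_\delta}v\leq C_{BG}^B$, and since $Q_-\subset U_\delta$ this translates into $\sup_{Q_-}u\leq C_{BG}^B e^{-k}$. Combined with the preceding theorem's $\inf_{Q_+}u\geq (C_{BG}^B)^{-1}e^{-k}$, the unknown constant $e^{-k}$ cancels and one recovers the Harnack inequality with $C_H^{B,\Lambda}\defeq (C_{BG}^B)^2$, monotone in $C_S^{B,\Lambda}$, $C_P^{B,\Lambda}$, $M^{B,\Lambda}$ and depending on $\tau,p,q,d,\delta$. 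The main technical point will be checking that Bombieri--Giusti's iterative passage from $L^\alpha$ to $L^\infty$ absorbs the polynomial blow-up of $F(\alpha)$ as $\alpha\to 0$ inside its geometric summation; this is precisely why the lemma must be invoked afresh in the $p_0=\infty$ regime, rather than obtained by naively chaining the $L^{\alpha_0}$ bound of the preceding theorem with the sup mean value inequality of Corollary~\ref{thm:mvi_cor}.
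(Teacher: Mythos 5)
There is a genuine gap, and it is precisely at the point you yourself flag as the ``main technical point.'' You propose to re-run Bombieri--Giusti on the past cylinder with $\alpha_0=\infty$, feeding it a sup mean-value inequality of the type in Corollary~\ref{thm:mvi_cor} with a constant of the shape $K^{F(\alpha)}$ where $F(\alpha)$ is ``polynomial in $1/\alpha$''. But Lemma~\ref{lemma:BombieriGiusti} requires the constant to be of the very specific form $\bigl(K_1(\sigma-\sigma')^{-\kappa}\gamma(U)^{-1}\bigr)^{1/\alpha-1/\alpha_0}$, i.e.\ the exponent must be \emph{linear} in $1/\alpha$. Corollary~\ref{thm:mvi_cor} carries the factor $2^{\frac{2}{\alpha^2}\frac{\nu}{\nu-1}}$, whose exponent grows like $1/\alpha^2$; a quadratic blow-up is not absorbed by the geometric summation in Bombieri--Giusti, and the paper says so explicitly in the remark immediately following Corollary~\ref{thm:mvi_cor}: this is exactly the reason Theorems~\ref{thm:mvisuper} and~\ref{thm:mvisuper_close} are proved separately --- for supercaloric functions the iteration \emph{does} deliver a clean $K^{1/\alpha}$ constant, which is why the weak-Harnack theorem can apply Bombieri--Giusti on both the past cylinder (with finite $\alpha_0$) and the future cylinder (with $\alpha_0=\infty$, to $u^{-1}$).

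Your conclusion that ``the lemma must be invoked afresh in the $\alpha_0=\infty$ regime, rather than obtained by naively chaining'' inverts the actual situation. The naive chaining is precisely what works and is what the paper does: the preceding theorem already gives $\|u\|_{\alpha_0,Q_-',\Lambda}\lesssim C_7\inf_{Q_+}u$ for the \emph{fixed} exponent $\alpha_0\in(0,\nu)$, and then Corollary~\ref{thm:mvi_cor} applied at that one fixed $\alpha=\alpha_0$ (re-parametrising $Q_-'$ as a top-aligned parabolic cylinder, whose top coincides with that of $Q_-$) gives $\sup_{Q_-}u\lesssim C(\alpha_0,\dots)\,\|u\|_{\alpha_0,Q_-',\Lambda}$. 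Since $\alpha_0$ never approaches zero, the factor $2^{2\nu/((\nu-1)\alpha_0^2)}$ is merely a fixed constant, and no fresh application of Bombieri--Giusti is needed. The only genuine care required is a small geometric adjustment (as stated, $Q_-$ and $Q_-'$ share the spatial ball $\delta B$, so one needs to enlarge $Q_-'$ to a slightly bigger $\delta' B$, $\delta<\delta'<1$, in the preceding theorem, or shrink $Q_-$); this nuisance is not addressed by your proposal either, but it is easily repaired and, unlike the $\alpha\to 0$ issue, does not put the argument in question.
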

\begin{proof} It follows from the previous theorem for positive supercaloric functions and Corollary \ref{thm:mvi_cor}. 
\end{proof}

We have to remark that the constant appearing in \eqref{ineq:parabolicharnack} it is strongly dependent on the ball $B$ we are considering, in particular depends on its center and its radius. We use here the power of assumption \ref{ass:b.2} to get rid of this dependence for balls which are large enough as it was discussed in Section \ref{subsec:constants}.

Indeed for all $x\in\R^d$ we can find  $s(x,1)\geq 1$ such that  $C_H^{B,\Lambda}\leq 2 C^{*,\Lambda}_{H}$ for all $B(x,r)$ with $r>s(x,1)$.

\begin{theorem}[H\"older continuity]\label{thm:holdercontinuity} Let $x\in \R^d$, and $s(x,1)\geq 1$ as above. Let $r>s(x,1)$ and $\sqrt{t}\geq r$. Define $t_0 := t+1$ and $r_0 := \sqrt{t_0}$. If $u$ is a positive caloric function on $(0,t_0)\times B(x,r_0)$ then for all $z,y\in B(x,r)$ we have
\begin{equation}\label{ineq:holderineq}
u(t,z)-u(t,y) \leq c\biggl(\frac{r}{\sqrt{t}}\biggr)^{\theta} \sup_{[3t_0/4,t_0]\times B(x,\sqrt{t_0}/2)} u
\end{equation}
where $\theta, c$ are constants which depends only on $C^{*,\Lambda}_{H}$.
\end{theorem}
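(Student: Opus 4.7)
The plan is the classical De Giorgi--Nash--Moser scheme: use the parabolic Harnack inequality to produce oscillation decay at geometric scales, and iterate to obtain Hölder continuity. I would carry this out in three steps.

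\textbf{Step 1 (Oscillation reduction).} For any positive caloric function $v$ on a parabolic ball $Q = Q(\tau, y, s, \rho)$ with $\rho > s(y,1)$, the uniform Harnack bound $C_H^{B,\Lambda} \leq 2\, C_H^{*,\Lambda}$ holds. Apply the parabolic Harnack inequality \eqref{ineq:parabolicharnack} to the two nonnegative caloric functions $v-\inf_Q v$ and $\sup_Q v - v$, then add the resulting inequalities. After trivial algebra one obtains
\[
\osc_{Q_+} v \;\leq\; \gamma\, \osc_Q v, \qquad \gamma := 1 - \frac{1}{2\, C_H^{*,\Lambda}} \in (0,1),
\]
with $\gamma$ depending only on $C_H^{*,\Lambda}$.

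\textbf{Step 2 (Iteration on nested cylinders).} Fix $\tau=1$ and $\delta=1/2$ in \eqref{definitionsets}. Set $\rho_k := r_0/2^k$ and $Q_k := (t_0 - \rho_k^2,\, t_0)\times B(x,\rho_k)$. Since $\rho_{k+1}^2 = \rho_k^2/4 < 3\rho_k^2/8$ and $\rho_{k+1}=\rho_k/2$, a direct check gives $Q_{k+1} \subseteq (Q_k)_+$. Let $K := \lfloor \log_2(r_0/r)\rfloor$, so that $\rho_K \in [r,2r)$. The hypothesis $r>s(x,1)$ guarantees $\rho_j > s(x,1)$ for every $0\leq j\leq K$, so Step 1 applies uniformly at each scale. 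Iterating,
\[
\osc_{Q_K} u \;\leq\; \gamma^K\, \osc_{Q_0} u \;\lesssim\; \Bigl(\frac{r}{r_0}\Bigr)^{\theta} \sup_{Q_0} u,
\qquad \theta := \frac{\log(1/\gamma)}{\log 2} > 0.
\]

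\textbf{Step 3 (Conclusion).} Since $\rho_K \geq r \geq \max(|z-x|,|y-x|)$ and $t_0 - \rho_K^2 \leq t_0 - 1 = t$ (using $\rho_K \geq r \geq s(x,1)\geq 1$), both $(t,z)$ and $(t,y)$ lie in $\overline{Q_K}$, so $u(t,z) - u(t,y) \leq \osc_{Q_K} u$. The assumption $\sqrt{t}\geq r$ together with $t_0 = t+1$ forces $\sqrt{t}\leq r_0 \leq \sqrt{2}\sqrt{t}$, hence $r/r_0 \sim r/\sqrt{t}$. Finally, $\sup_{Q_0} u$ is bounded by a constant multiple of $\sup_{[3t_0/4,\,t_0]\times B(x,r_0/2)} u$ via a chaining of the parabolic Harnack inequality along a finite family of overlapping cylinders of radius comparable to $r_0$; both the number of links and the accumulated constant depend only on $C_H^{*,\Lambda}$ and the dimension. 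Combining these estimates yields \eqref{ineq:holderineq}.

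\textbf{Main obstacle.} The key technical point is preserving the uniform Harnack constant $2\, C_H^{*,\Lambda}$ throughout the iteration, which requires every $\rho_k$ to stay above $s(x,1)$; this is precisely what the hypothesis $r > s(x,1)$ ensures. Without this quantitative control on the scale, the oscillation factor $\gamma$ would deteriorate and the iteration would fail to produce a clean power-type decay. A secondary (but essentially routine) matter is the Harnack-chaining argument used to pass from $\sup_{Q_0} u$ to the sup on the narrow top-center strip appearing on the right-hand side of \eqref{ineq:holderineq}.
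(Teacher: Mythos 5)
Your Steps 1 and 2 are sound and essentially the paper's argument: the paper normalizes $v_k = (u - \inf_{Q_k} u)/\osc_{Q_k} u$ and applies Harnack to one side (giving $\gamma = 1 - 1/(4C_H^{*,\Lambda})$), while you apply Harnack to $v-\inf v$ and $\sup v - v$ and add (giving $\gamma = 1 - 1/(2C_H^{*,\Lambda})$); both are standard and give a decay factor depending only on $C_H^{*,\Lambda}$. The iteration on the nested cylinders $Q_k$ with $r > s(x,1)$ guaranteeing uniform Harnack constants is also exactly the paper's strategy.

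The genuine gap is in Step 3. Your Step 2 produces $\osc_{Q_K} u \leq \gamma^K \osc_{Q_0} u \leq \gamma^K \sup_{Q_0} u$, and then Step 3 claims that $\sup_{Q_0} u$ can be bounded by a constant times $\sup_{[3t_0/4,t_0]\times B(x,\sqrt{t_0}/2)} u$ by Harnack chaining. This cannot work: $Q_0 = (t_0 - r_0^2, t_0)\times B(x,r_0) = (0,t_0)\times B(x,r_0)$ is the \emph{entire} cylinder on which $u$ is assumed caloric, including neighborhoods of the parabolic boundary $\{t=0\}\times B(x,r_0)$ and $(0,t_0)\times\partial B(x,r_0)$. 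The parabolic Harnack inequality only bounds the sup on an \emph{interior} early cylinder $Q_-$ by the inf on an interior late cylinder $Q_+$, both strictly inside $Q$. Any chain of Harnack inequalities must keep all links at positive parabolic distance from the boundary, so no chain can reach the sup over all of $Q_0$; indeed $u$ can be arbitrarily large near the parabolic boundary (consider a heat kernel started from a point of $\partial B(x,r_0)$ at time $t_0/2$) while remaining bounded on interior sets. So the asserted chaining bound is false, and the "essentially routine" remark underestimates the problem.

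The fix is simple and is what the paper does (modulo a small imprecision in the paper's claim $Q_{k+1}=Q_k^+$, which should read $Q_{k+1}\subset Q_k^+$): start the iteration one scale later. Applying the oscillation decay for $k = 1,\dots,K-1$ instead of $k = 0,\dots,K-1$ gives
\[
\osc_{Q_K} u \;\leq\; \gamma^{K-1}\,\osc_{Q_1} u \;\leq\; \gamma^{K-1}\,\sup_{Q_1} u,
\]
and since $r_1 = r_0/2$ and $r_0^2 = t_0$ one has $Q_1 = (t_0 - r_0^2/4,\,t_0)\times B(x,r_0/2) = (3t_0/4,\,t_0)\times B(x,\sqrt{t_0}/2)$, which is \emph{exactly} the set appearing on the right-hand side of \eqref{ineq:holderineq}. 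No chaining argument is needed at all; the extra factor $\gamma^{-1}$ is absorbed into the constant $c$.
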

\begin{proof}
	Set $r_k\defeq 2^{-k} r_0$ and let
	\[
	Q_k\defeq (t_0-r_k^2,t_0)\times B(x,r_k),
	\]
	let $Q^-_k$ and $Q^+_k$ be accordingly defined as in \eqref{definitionsets} with $\delta=1/2$ and $\tau=1$,
	\[
	Q_k^-\defeq (t_0-7/8 r_k^2,t_0-5/8r_k^2)\times B(x,1/2r_k),\quad Q_k^+\defeq (t_0-1/4r_k^2,t_0)\times B(x,1/2r_k).	
	\] 
	Notice that $Q_{k+1}\subset Q_k$ and actually $Q_{k+1}=Q_k^+$. We set
	\[
	v_k=\frac{u-\inf_{Q_k} u}{\sup_{Q_k}u- \inf_{Q_k} u}
	\]
	clearly $v_k$ is a caloric on $Q_k$, in particular $0\leq v_k\leq 1$ and 
	\[
	\osc(v_k,Q_k):=\sup_{Q_k} v_k - \inf_{Q_k} v_k=1
	\]
	this implies that replacing $v_k$ by $1-v_k$ if necessary $\sup_{Q_k^-} v_k \geq 1/2$. Now for all $k$ such that $r_k\geq s(x,1)$ we can apply the parabolic Harnack inequality and get
	\[
	\frac{1}{2}\leq \sup_{Q_k^-} v_k \leq 2 C_{H}^{*,\Lambda} \inf_{Q_k^+} v_k
	\]
	since $Q_k^+ = Q_{k+1}$ we have that
	\begin{align*}
	\osc(u,Q_{k+1}) &= \frac{\sup_{Q_{k+1}}u-\inf_{Q_{k+1}}  u}{osc(u,Q_k)} \osc(u,Q_k)\\
	&=\biggl(\frac{\sup_{Q_{k+1}}u-\inf_{Q_{k}}u}{\osc(u,Q_k)}-\inf_{Q_{k+1}} v_k\biggr)\osc(u,Q_k)
	\end{align*}
	this yields $\osc(u,Q_{k+1})\leq (1-\delta) \osc(u,Q_k)$ with $\delta^{-1}=4C^{*,\Lambda}_H$. We can now iterate the inequality up to $k_0$ such that $r_{k_0}\geq r > r_{k_0+1}$ and get
	\[
	\osc(u,Q_{k_0})\leq (1-\delta)^{k_0-1} \osc(u,Q_0^+)
	\]
	Finally since $B(x,r)\subset B(x,r_{k_0})$ and $t\in (t_0-r_{k_0}^2,t_0)$ the claim is proved.
\end{proof}

Starting from \eqref{ineq:holderineq} and knowing that $p_t(z,\cdot)$ is caloric on the whole $\R^d$ for almost all $z\in\R^d$ we get the following corollary.
\begin{corollary}\label{lem:holder}
	 Let $x\in \R^d$, and $s(x,1)\geq 1$ as above. Let $r>s(x,1)\geq 1$ and $\sqrt{t}\geq r$. Then we have that for almost all $o\in \R^d$
		\begin{equation}\label{ineq:boundholder}
		\sup_{z,y\in B(x,r)}|p_t(o,z)-p_t(o,y)| \leq c\biggl( \frac{r}{\sqrt{t}}\biggr)^{\theta} t^{-d/2}
		\end{equation}
		where $\theta, c$ are positive constants which depends only on $C^{*,\Lambda}_{H}$.
\end{corollary}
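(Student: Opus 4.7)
The plan is to apply Theorem \ref{thm:holdercontinuity} to the function $u(s, w) := p_s(o, w)$, which for almost every $o \in \R^d$ is a positive caloric function on $(0, \infty) \times \R^d$. Set $t_0 = t+1$ and $r_0 = \sqrt{t_0}$, so that the hypotheses $r > s(x, 1) \geq 1$ and $\sqrt{t} \geq r$ place us exactly in the setting of Theorem \ref{thm:holdercontinuity}. That theorem immediately gives, for all $z, y \in B(x, r)$,
\[
|p_t(o, z) - p_t(o, y)| \leq c \left(\frac{r}{\sqrt{t}}\right)^\theta \sup_{[3t_0/4,\, t_0] \times B(x, r_0/2)} p_s(o, w),
\]
so the task reduces to showing that the supremum on the right is bounded by a universal constant multiple of $t^{-d/2}$.

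To bound the supremum I would apply the mean value inequality, Corollary \ref{thm:mvi_cor}, to the positive subcaloric function $p_\cdot(o, \cdot)$ on the parabolic cylinder $Q = Q(1, x, t_0, r_0) = (0, t_0) \times B(x, r_0)$, with parameters $\alpha = 1$, $\tau = 1$, $\sigma = 1$, $\sigma' = 3/4$. Then $Q_{\sigma'} = (t_0/4, t_0) \times B(x, 3r_0/4)$ contains $[3t_0/4, t_0] \times B(x, r_0/2)$. Since $r_0 \geq \sqrt{t} \geq r > s(x, 1)$, the discussion in Section \ref{subsec:constants} ensures that $C_S^{B(x, r_0), \Lambda} \leq 2 C_S^{*, \Lambda}$, so all the prefactors appearing in Corollary \ref{thm:mvi_cor} become absolute constants. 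This yields
\[
\sup_{[3t_0/4,\, t_0] \times B(x, r_0/2)} p_s(o, w) \lesssim \|p_\cdot(o, \cdot)\|_{1, Q, \Lambda}.
\]

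Since $p_s(o, \cdot)$ is the density of a sub-Markov semigroup with respect to $\Lambda(w)\,dw$, one has $\int_{\R^d} p_s(o, w) \Lambda(w)\,dw \leq 1$ for every $s > 0$. Hence
\[
\|p_\cdot(o, \cdot)\|_{1, Q, \Lambda} = \frac{1}{|Q|}\int_0^{t_0}\!\!\int_{B(x, r_0)} p_s(o, w) \Lambda(w)\,dw\,ds \leq \frac{1}{|B(x, r_0)|} \lesssim t_0^{-d/2} \lesssim t^{-d/2},
\]
where the last step uses $t_0 = t+1 \leq 2t$ (which holds because $\sqrt{t} \geq r > 1$). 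Concatenating the three displays yields \eqref{ineq:boundholder}. I do not expect a deep obstacle in this argument; the one subtle point is the standard justification that $p_\cdot(o, \cdot)$ may legitimately be treated as a caloric function in the Dirichlet-form sense required by Theorem \ref{thm:holdercontinuity}, but this is precisely what the remark preceding the statement already takes for granted.
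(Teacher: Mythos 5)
Your argument is correct and arrives at the same bound, but it closes the estimate differently from the paper. After the common first step --- invoking Theorem \ref{thm:holdercontinuity} to reduce to bounding $\sup_{[3t_0/4,t_0]\times B(x,\sqrt{t_0}/2)}p_s(o,\cdot)$ --- the paper applies the parabolic Harnack inequality to push that supremum to an infimum over a later time slab, then dominates the infimum by the $\Lambda$-weighted spatial average of the kernel, using $\int p_{\bar t}(o,\cdot)\Lambda\,du\leq 1$ together with a \emph{lower} bound on $\|\Lambda\|_{1,B(x,\sqrt{t_0}/2)}$ (available for large balls via \ref{ass:b.2}, as discussed in Section \ref{subsec:constants}). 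You instead go directly through the one-sided mean value inequality, Corollary \ref{thm:mvi_cor} with $\alpha=1$, $\tau=1$, $\sigma'=3/4$, bounding the supremum by $\|p_\cdot(o,\cdot)\|_{1,Q,\Lambda}$ on the full cylinder $Q=(0,t_0)\times B(x,r_0)$, and then trivially by $|B(x,r_0)|^{-1}\lesssim t^{-d/2}$ using the same mass bound. This is a genuine simplification: you only need the sup-bound for subcaloric functions rather than the full two-sided Harnack inequality (a strictly stronger output of Moser iteration), and since the $\|\cdot\|_{1,Q,\Lambda}$ normalization is by Lebesgue volume, you never need a lower bound on $\|\Lambda\|_{1,B}$. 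The hypothesis $r>s(x,1)$ plays the same role in both routes, namely making $C_S^{B(x,r_0),\Lambda}\leq 2C_S^{*,\Lambda}$ uniformly. As you note, both routes implicitly treat $p_s(o,\cdot)$ as a caloric (hence subcaloric) function of $(s,\cdot)$ in the Dirichlet-form sense; this is exactly what the remark preceding the corollary licenses, and it underlies the paper's use of Harnack just as much as your use of the mean value inequality.
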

\begin{proof} We have just to bound the right hand side of \eqref{ineq:holderineq}.  Define $t_0=t+1$ as in the previous theorem. By the Harnack's inequality applied to the caloric function $p_t(o,\cdot)$ we have
	\begin{align*}
 \sup_{[3t_0/4,t_0]\times B(x,\sqrt{t_0}/2)} p_s(o,u)&\leq 2 C_H^{*,\Lambda}\inf_{[3/2 t_0,7/4t_0]\times B(x,\sqrt{t_0}/2)} p_s(o,u)\\
 &\leq  2 C_H^{*,\Lambda}\Bigl[|B(x,\sqrt{t_0}/2)|\|\Lambda\|_{1,B(x,\sqrt{t_0}/2)}\Bigr]^{-1}\int_{B(x,\sqrt{t_0}/2)} p_{\bar{t}} (o,u)\Lambda(u)\,du
	\end{align*}
	where $\bar{t}\in [3/2 t_0,7/4t_0]$.
	
Clearly $\int_{B(x,\sqrt{t_0}/2)} p_{\bar{t}} (o,u)\Lambda(u)\,du\leq 1$. For $\sqrt{t_0}>r>s(x,1)$, we can bound  $\|\Lambda\|_{1,B(x,\sqrt{t_0}/2)}$ by a constant which does not depend on $x$ or $t_0$ by assumption \ref{ass:b.2}, hence we finally get the desired bound. 
\end{proof}

We want to stress that Corollary \eqref{lem:holder} is not a true H\"older continuity result, since we cannot bound the variations for arbitrarily small balls, and indeed it is not even possible to prove continuity of the density with this technique. We are interested in finding H\"older's continuity bounds for $p_{t/\epsilon^2}(o,\cdot/\epsilon)$ for almost all $x\in \R^d$, uniformly for $\epsilon$ small. In order to do that we need the following assumption, which accounts for a control of moving averages.

\begin{itemize}
\item[$(b.3)$\namedlabel{ass:b.3}{$(b.3)$}]there exist $p,q\in[1,\infty]$ satisfying  $1/p+1/q<2/d$ such that for all $r>0$ and $x\in \R^d$
 \[
  \sup_{x\in\R^d}\limsup_{\epsilon\to0} \frac{1}{|B(x/\epsilon,r/\epsilon)|} \int_{B(x/\epsilon,r/\epsilon)} \Lambda^p +\lambda^{-q} \,dx <\infty.
 \]
\end{itemize}
It is clear that \ref{ass:b.3} implies \ref{ass:b.2} choosing $x=0$.
Given assumption \ref{ass:b.3} it is not surprising that the following holds true.

\begin{lemma} Fix $r>0$, $\sqrt{t}\geq r$, $x\in\R^d$ and $s(x,1)\geq 1$ as above. Let $\epsilon>0$ such that $r/\epsilon > s(x,1)$, then we have that for almost all $o\in \R^d$
		\begin{equation}\label{ineq:boundholderepsilon}
		\sup_{z,y\in B(x,r)}
		\epsilon^{-d}|p_{t/\epsilon^2}(o,z/\epsilon)-p_{t/\epsilon^2}(o,y/\epsilon)| \leq c\biggl( \frac{r}{\sqrt{t}}\biggr)^{\theta} t^{-d/2}
		\end{equation}
		where $\theta, c$ are positive constants which depends only on $C^{*,\Lambda}_{H}$. 
\end{lemma}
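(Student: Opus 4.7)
The strategy is a direct rescaling argument: reduce to Corollary \ref{lem:holder} applied at the enlarged scales $(x/\epsilon, r/\epsilon, t/\epsilon^2)$, with the crucial input that the parabolic Harnack constant appearing there admits an $\epsilon$-uniform bound by virtue of assumption \ref{ass:b.3}.

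To perform the rescaling, set $z'\defeq z/\epsilon$ and $y'\defeq y/\epsilon$ so that $z',y' \in B(x/\epsilon, r/\epsilon)$ as $z,y$ range over $B(x,r)$; observe also $\sqrt{t/\epsilon^2}=\sqrt{t}/\epsilon\geq r/\epsilon$. Plugging these parameters into Corollary \ref{lem:holder} yields
\[
\sup_{z',y'\in B(x/\epsilon, r/\epsilon)}|p_{t/\epsilon^2}(o,z')-p_{t/\epsilon^2}(o,y')|\leq c\biggl(\frac{r/\epsilon}{\sqrt{t}/\epsilon}\biggr)^{\theta}(t/\epsilon^2)^{-d/2}=c\,\epsilon^d\biggl(\frac{r}{\sqrt{t}}\biggr)^{\theta}t^{-d/2},
\]
and multiplying by $\epsilon^{-d}$ gives precisely \eqref{ineq:boundholderepsilon}.

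The real content is to verify that this application of the corollary is legitimate uniformly in $\epsilon$, i.e.\ that the H\"older exponent $\theta$ and prefactor $c$ can be kept fixed as $\epsilon\to 0$. Inspecting the proof of Theorem \ref{thm:holdercontinuity} and its corollary, one sees that $\theta$ and $c$ depend only on an upper bound for the parabolic Harnack constant $C_H^{B(x/\epsilon,r/\epsilon),\Lambda}$. By Section \ref{subsec:constants}, this constant is monotone in the three averages $\|\lambda^{-1}\|_{q,B}$, $\|\Lambda\|_{p,B}$, $\|\Lambda\|_{1,B}^{-1}$ (together with comparable quantities on sub-balls), evaluated on $B=B(x/\epsilon,r/\epsilon)$. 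Assumption \ref{ass:b.3} gives exactly
\[
\sup_{x\in\R^d}\limsup_{\epsilon\to 0}\bigl(\|\lambda^{-1}\|_{q,B(x/\epsilon,r/\epsilon)}+\|\Lambda\|_{p,B(x/\epsilon,r/\epsilon)}\bigr)<\infty,
\]
and, arguing as in Section \ref{subsec:constants}, also $\liminf_{\epsilon\to 0}\|\Lambda\|_{1,B(x/\epsilon,r/\epsilon)}>0$. Enlarging the threshold $s(x,1)$ if necessary, one therefore obtains $C_H^{B(x/\epsilon,r/\epsilon),\Lambda}\leq 2C_H^{*,\Lambda}$ whenever $r/\epsilon>s(x,1)$, which is exactly the standing hypothesis of the lemma.

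The main subtlety, and the reason \ref{ass:b.3} is required in place of the weaker \ref{ass:b.2}, is the $\epsilon$-uniform control of the Harnack constant along the \emph{translated} centres $x/\epsilon$: without a moving-averages assumption, one only controls averages for balls centred at the origin, and that control is lost as the centre $x/\epsilon$ drifts to infinity with $\epsilon\to 0$. Once this uniformity is available, the rest of the proof is a purely algebraic rescaling, as displayed above.
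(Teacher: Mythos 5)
Your proposal is correct and takes essentially the same approach as the paper. The paper phrases the argument by noting that $u(t/\epsilon^2, x/\epsilon)$ is caloric for the rescaled Dirichlet form with coefficients $a(\cdot/\epsilon)$ and then re-runs the proof of Theorem \ref{thm:holdercontinuity}; you instead apply Corollary \ref{lem:holder} as a black box at the scaled-up parameters $(x/\epsilon, r/\epsilon, t/\epsilon^2)$ — these are the same change of variables, and in both cases the essential content is that assumption \ref{ass:b.3} controls the Harnack constant on the moving balls $B(x/\epsilon, r/\epsilon)$ (equivalently the averages of $\lambda^{-1},\Lambda$ there) uniformly in $\epsilon$, which you identify correctly.
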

\begin{proof} The proof is the same as in Theorem \ref{thm:holdercontinuity} since given a caloric function $u(t,x)$, then $u(t/\epsilon^2,x/\epsilon)$ is caloric with respect to the Dirichlet form with coefficients given by $a(x/\epsilon)$. Assumption \ref{ass:b.3} is used to have uniform constants for moving averages. 
\end{proof}

From the H\"older continuity estimates \eqref{ineq:boundholderepsilon} we get
\begin{lemma}\label{lemma:lclt} For almost all $o\in\R^d$ and all $r>0$
\[
\lim_{r_0\to 0} \limsup_{\epsilon\to0} \sup_{\substack{x,y\in B(o,r)\\|x-y|< r_0}}\sup_{t\in I} \epsilon^{-d}|p(t/\epsilon^2,o,x/\epsilon)-p(t/\epsilon^2,o,y/\epsilon)|=0
\]
\end{lemma}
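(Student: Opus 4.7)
The plan is to deduce this equicontinuity-type statement directly from the rescaled \Holder estimate \eqref{ineq:boundholderepsilon} of the preceding lemma, together with the uniformity in the center provided by assumption \ref{ass:b.3}.

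Set $T_0 \defeq \inf I > 0$, and let $r_0 \in (0,\sqrt{T_0})$; this already guarantees $\sqrt{t}\geq r_0$ for every $t\in I$. The key preparatory step is to upgrade the pointwise threshold $s(x,1)$ from Section \ref{subsec:constants} to a uniform one on the compact set $B(o,r)$. Indeed, assumption \ref{ass:b.3} (used precisely as in the proof of \eqref{ineq:boundholderepsilon}) ensures that the averages $\|\lambda^{-1}\|_{q,B(x,s)}$ and $\|\Lambda\|_{p,B(x,s)}$ converge to their ergodic limits uniformly in $x$ over any compact set. Hence there exists $S = S(o,r,\delta) < \infty$ such that $s(x,1)\leq S$ for every $x\in B(o,r)$. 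Therefore, for every $\epsilon < r_0/S$ and every $x\in B(o,r)$, the hypotheses of the lemma preceding \eqref{ineq:boundholderepsilon} are met with radius $r_0$ centered at $x$.

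Now fix such an $\epsilon$, fix $x,y\in B(o,r)$ with $|x-y|<r_0$, and fix $t\in I$. Since $y\in B(x,r_0)$, an application of \eqref{ineq:boundholderepsilon} with center $x$ and radius $r_0$ yields
\[
\epsilon^{-d}\bigl|p_{t/\epsilon^2}(o,x/\epsilon)-p_{t/\epsilon^2}(o,y/\epsilon)\bigr|\;\leq\; c\,\Bigl(\frac{r_0}{\sqrt{t}}\Bigr)^{\theta} t^{-d/2}\;\leq\; c\,T_0^{-d/2}\Bigl(\frac{r_0}{\sqrt{T_0}}\Bigr)^{\theta},
\]
where the constants $c,\theta$ depend only on $C_{H}^{*,\Lambda}$, and in particular are independent of $x,y,t$, and $\epsilon$. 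Taking the supremum over $x,y,t$ and then $\limsup_{\epsilon\to 0}$, the right-hand side is unchanged, and finally letting $r_0\to 0$ gives the claim.

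The only subtle point is the uniformization of $s(x,1)$ on $B(o,r)$: without \ref{ass:b.3}, $s(x,1)$ could a priori blow up as $x$ varies, and the rescaled \Holder bound would not hold simultaneously for all centers in $B(o,r)$ with a single $\epsilon$. Everything else is a direct limit argument from \eqref{ineq:boundholderepsilon}.
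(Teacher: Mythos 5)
Your overall strategy coincides with the paper's -- both deduce the equicontinuity from the rescaled \Holder estimate \eqref{ineq:boundholderepsilon} -- but the way you handle the dependence of the threshold $s(x,1)$ on $x$ has a genuine gap.

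You assert that assumption \ref{ass:b.3} yields $S := \sup_{x\in B(o,r)} s(x,1) < \infty$, so that a single $\bar\epsilon$ (namely $r_0/S$) works simultaneously for every center $x\in B(o,r)$. But \ref{ass:b.3} is stated as
\[
\sup_{x\in\R^d}\;\limsup_{\epsilon\to 0}\;\frac{1}{|B(x/\epsilon,r/\epsilon)|}\int_{B(x/\epsilon,r/\epsilon)}\Lambda^p+\lambda^{-q}\,dx \;<\;\infty,
\]
which is a supremum of limsups, and this does \emph{not} imply $\limsup_{\epsilon\to 0}\sup_{x\in K}(\cdots)<\infty$ nor uniform convergence of the moving averages on a compact $K$. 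In general the two orders of operation are inequivalent (take $f(n,k)=n\,\mathbb{1}_{\{n\le k\}}$: each $\limsup_n f(n,k)=0$, yet $\sup_k f(n,k)=n$). So the existence of a finite uniform $S$ over the whole ball $B(o,r)$, which is exactly the step that lets you run the estimate with a single $\bar\epsilon$, remains unproved; it is precisely the issue the argument must circumvent.

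The paper gets around this by never attempting a uniform bound on $s(\cdot,1)$ over the continuum. Given $\delta>0$, choose $r_0$ as you do, then cover the compact $B(o,r)$ by \emph{finitely} many balls $\{B(x,r_0/2)\}_{x\in\mathcal{X}}$. For a finite set $\mathcal{X}$, $\max_{x\in\mathcal{X}} s(x,1)<\infty$ is automatic, so one finds $\bar\epsilon$ with $r_0/\epsilon\ge\max_{x\in\mathcal{X}}s(x,1)$ for all $\epsilon<\bar\epsilon$. The estimate \eqref{ineq:boundholderepsilon} is then applied only at centers $x\in\mathcal{X}$, and a triangle inequality transfers the bound to arbitrary pairs $z,y\in B(o,r)$ with $|z-y|\le r_0/2$ by routing through the nearest $x\in\mathcal{X}$ (since then $z,y\in B(x,r_0)$). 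If you replace your uniformization step by this finite-cover plus triangle-inequality argument, the rest of your computation goes through.
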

\begin{proof}
Let us denote by $t_1:= \inf I$. Fix $\delta>0$ and set
\[
r_0 := \frac{\sqrt{t_1}}{2}\wedge\biggl( \frac{ t_1^{d/2}\delta}{2 c}\biggr)^{1/\theta}\sqrt{t_1}.
\]
Since $B(o,r)$ is compact we can cover it by a finite set of balls $\{B(x,r_0/2)\}_{x\in\mathcal{X}}$  of radius $r_0/2$ and centers $x\in\mathcal{X}$. We can now find $\bar{\epsilon}>0$ such that for all $\epsilon<\bar{\epsilon}$ we have $r_0/\epsilon \geq \max_{x\in \mathcal{X}} s(x,1)$. Now an application of \eqref{ineq:boundholderepsilon} gives
\[
\sup_{x\in\mathcal{X}}\sup_{|x-y|< r_0}\sup_{t\in I} \epsilon^{-d}|p(t/\epsilon^2,o,x/\epsilon)-p(t/\epsilon^2,o,y/\epsilon)|\leq c\biggl( \frac{r_0}{\sqrt{t_1}}\biggr)^{\theta} t_1^{-d/2}\leq \frac{\delta}{2}.
\]
Next we can use this bound to conclude, take $z\in B(o,r)$, and take $x\in \mathcal{X}$ such that $|z-x|< r_0/2$, then
\begin{align*}
\sup_{|z-y|\leq r_0/2}\sup_{t\in I} \epsilon^{-d}&|p(t/\epsilon^2,o,z/\epsilon)-p(t/\epsilon^2,o,y/\epsilon)|\\
&\leq\sup_{|z-x|\leq r_0}\sup_{t\in I} \epsilon^{-d}|p(t/\epsilon^2,o,x/\epsilon)-p(t/\epsilon^2,o,z/\epsilon)|\\ &+\sup_{|y-x|\leq r_0}\sup_{t\in I} \epsilon^{-d}|p(t/\epsilon^2,o,x/\epsilon)-p(t/\epsilon^2,o,y/\epsilon)|\leq \delta
\end{align*}
and this ends the proof since we showed that the bound is uniform in $z\in B(o,r)$.
\end{proof}

\section{Local Central Limit Theorem}
We finally give the main application of the computations we have developed in the preceding sections.
The approach we exploit is the one in \cite{CroydonHambly}, in particular their Assumption (4) must be compared with our inequality \eqref{ineq:holderineq}.

We denote by $k_t^{\Sigma}(x)$, $x\in\R^d$ the gaussian kernel with covariance matrix $\Sigma$, namely
\[
k_t^{\Sigma}(x) := \frac{1}{\sqrt{(2\pi t)^{d} \det \Sigma}} \exp\Bigl(-\frac{x\cdot \Sigma^{-1} x}{2t}\Bigr)
\]
We need here two further assumptions
\begin{itemize}
\item[$(b.4)$\namedlabel{ass:b.4}{$(b.4)$}] for almost all $x\in\R^d$ and all $r>0$
\[
\lim_{\epsilon\to 0} \frac{1}{|B(x/\epsilon,r/\epsilon)|} \int_{B(x/\epsilon,r/\epsilon)} \Lambda\,dx =: a_\Lambda < \infty.
\]
\item[$(b.5)$\namedlabel{ass:b.5}{$(b.5)$}]there exists a positive define symmetric matrix $\Sigma$ such that for almost all $o\in\R^d$, for any compact interval $I\subset(0,\infty)$, almost all $x\in\R^d$ and $r>0$
\[
\lim_{\epsilon\to 0} \frac{1}{\epsilon^d}\int_{B(x,r)}p_{t/\epsilon^2}(o,y/\epsilon)\,\Lambda(y/\epsilon)dy\to\int_{B(x,r)} k^{\Sigma}_t(y)\,dy
\]
uniformly in $t\in I$.
\end{itemize}

\begin{theorem}\label{thm:localCLT} Fix a compact interval $I\subset (0,\infty)$ and $r>0$. Assume \ref{ass:b.1}--\ref{ass:b.5}, then for almost all $o\in\R^d$ and for all $r>0$
\[
\lim_{\epsilon\to0} \sup_{x\in B(o,r)}\sup_{t\in I} |\epsilon^{-d} p(t/\epsilon^2,o,x/\epsilon)-a_\Lambda^{-1}k_t^\Sigma(x)|=0.
\]
\end{theorem}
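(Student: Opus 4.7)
The plan is to exploit the equicontinuity of the rescaled densities provided by Lemma \ref{lemma:lclt} together with the ergodic-type averaging assumptions \ref{ass:b.4} and \ref{ass:b.5}. While we cannot directly pass from the pointwise value of $\epsilon^{-d}p_{t/\epsilon^2}(o,x/\epsilon)$ to its local mean (the weight $\Lambda$ is unbounded), Lemma \ref{lemma:lclt} will let us replace pointwise values with $\Lambda$-weighted local averages at negligible cost. For $x\in B(o,r)$ and $r_0>0$ small, I would introduce
\[
A_\epsilon(x,t,r_0)\defeq\epsilon^{-d}\int_{B(x,r_0)}p_{t/\epsilon^2}(o,y/\epsilon)\Lambda(y/\epsilon)\,dy,\qquad M_\epsilon(x,r_0)\defeq\int_{B(x,r_0)}\Lambda(y/\epsilon)\,dy.
\]
After the change of variable $z=y/\epsilon$, assumption \ref{ass:b.4} yields $|B(x,r_0)|^{-1}M_\epsilon(x,r_0)\to a_\Lambda$, and assumption \ref{ass:b.5} gives $A_\epsilon(x,t,r_0)\to\int_{B(x,r_0)}k_t^\Sigma(y)\,dy$ uniformly in $t\in I$.

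The main step is then to use Lemma \ref{lemma:lclt} to control the difference between the pointwise value and the weighted average. Given $\delta>0$, the lemma furnishes $r_0>0$ and $\epsilon_0>0$ so that
\[
\sup_{y\in B(x,r_0)}\sup_{t\in I}\bigl|\epsilon^{-d}p_{t/\epsilon^2}(o,y/\epsilon)-\epsilon^{-d}p_{t/\epsilon^2}(o,x/\epsilon)\bigr|<\delta\qquad(\epsilon<\epsilon_0).
\]
Multiplying by $\Lambda(y/\epsilon)$, integrating over $B(x,r_0)$ and dividing by $M_\epsilon(x,r_0)$ gives
\[
\Bigl|\epsilon^{-d}p_{t/\epsilon^2}(o,x/\epsilon)-\frac{A_\epsilon(x,t,r_0)}{M_\epsilon(x,r_0)}\Bigr|<\delta.
\]
Sending $\epsilon\to0$, the ratio converges to $\bigl(a_\Lambda|B(x,r_0)|\bigr)^{-1}\int_{B(x,r_0)}k_t^\Sigma(y)\,dy$; sending $r_0\to0$ and using continuity of the Gaussian kernel yields the pointwise limit $a_\Lambda^{-1}k_t^\Sigma(x)$, uniformly in $t\in I$.

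For the uniformity in $x\in B(o,r)$, I would cover $B(o,r)$ by a finite collection of balls $B(x_j,r_0/2)$ centered at points $x_j$ in the full-measure set on which \ref{ass:b.4} and \ref{ass:b.5} both hold (this set is dense by full measure). For arbitrary $x\in B(o,r)$, pick the nearest center $x_j$ and use the triangle inequality
\begin{align*}
|\epsilon^{-d}p_{t/\epsilon^2}(o,x/\epsilon)-a_\Lambda^{-1}k_t^\Sigma(x)|&\leq|\epsilon^{-d}p_{t/\epsilon^2}(o,x/\epsilon)-\epsilon^{-d}p_{t/\epsilon^2}(o,x_j/\epsilon)|\\
&\quad+|\epsilon^{-d}p_{t/\epsilon^2}(o,x_j/\epsilon)-a_\Lambda^{-1}k_t^\Sigma(x_j)|+a_\Lambda^{-1}|k_t^\Sigma(x_j)-k_t^\Sigma(x)|.
\end{align*}
Lemma \ref{lemma:lclt} controls the first term uniformly for $\epsilon$ small, the pointwise argument above handles the second (with uniform convergence in $t$), and uniform continuity of $k_t^\Sigma$ on $B(o,r)\times I$ handles the third.

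The main obstacle is precisely the fact that $\Lambda$ is unbounded: assumption \ref{ass:b.5} delivers convergence of $\Lambda$-weighted integrals of the density, not of the density itself, and without Lemma \ref{lemma:lclt} one could not safely commute the weight with a pointwise evaluation. What makes the strategy work is that Lemma \ref{lemma:lclt}, itself a consequence of the parabolic Harnack inequality built up in Section 3, provides an oscillation bound on $\epsilon^{-d}p_{t/\epsilon^2}(o,\cdot/\epsilon)$ that is uniform in $\epsilon$ small and independent of $\Lambda$, so that a weighted mean of nearly-constant values is nearly the constant, regardless of how the weights are distributed.
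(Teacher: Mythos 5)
Your proof is correct and follows essentially the same strategy as the paper: both replace the pointwise value $\epsilon^{-d}p_{t/\epsilon^2}(o,x/\epsilon)$ by a $\Lambda$-weighted local average at the cost of the oscillation bound from Lemma \ref{lemma:lclt}, identify the limit of that average via \ref{ass:b.4} and \ref{ass:b.5}, and then extend to uniformity in $x$ by a finite-cover-plus-triangle-inequality argument. The paper organizes the bookkeeping as an additive decomposition $J=J_1+J_2+J_3+J_4$ (oscillation of $p$, the target term, the weighted-volume error, oscillation of $k_t$) and solves for $J_2$, whereas you work directly with the ratio $A_\epsilon/M_\epsilon$; this is a cosmetic difference and both arrive at the same chain of estimates.
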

\begin{proof}
The proof presented here is a slight variation of the one in  \cite{CroydonHambly} due to the fact that we work on $\R^d$ rather than on graphs. For $x\in B(o,r)$ and $r_0>0$ denote 
\[
J(t,\epsilon):=\frac{1}{\epsilon^d}\int_{B(x,r_0)}p_{t/\epsilon^2}(o,y/\epsilon)\,\Lambda(y/\epsilon)dy-\int_{B(x,r_0)} k_t(y)\,dy
\]
where $k_t := k_t^\Sigma$ from assumption \ref{ass:b.5} is the gaussian kernel with covariance matrix $\Sigma$. Then we can split $J(t,\epsilon) =J_1(t,\epsilon)+J_2(t,\epsilon)+J_3(t,\epsilon)+J_4(t,\epsilon)$ 
\begin{align*}
J_1(t,\epsilon) &:= \int_{\frac{1}{\epsilon}  B(x,r_0)}[p(t/\epsilon^2,o,y)-p^\omega(t/\epsilon^2,o,x/\epsilon)] \Lambda(y)dy\\
J_2(t,\epsilon)&:=\int_{\frac{1}{\epsilon}  B(x,r_0)} \Lambda(y)dy \, \Bigl[p(t/\epsilon^2,o,x/\epsilon)-\epsilon^d a_{\Lambda}^{-1}k_t(x)\Bigr]\\
J_3(t,\epsilon)&:=k_t(x)\Bigl[\epsilon^d a_{\Lambda}^{-1}\int_{\frac{1}{\epsilon}  B(x,r_0)}\Lambda(y)dy-|B(x,r_0)|\Bigr]\\
J_4(t,\epsilon)&:=\int_{B(x,r_0)}(k_t(x)-k_t(y))dy.
\end{align*}
Fix $\delta>0$. Thanks to Lemma \ref{lemma:lclt} and by the continuity of $k_t$ we can chose $r_0\in (0,1)$ and $\bar{\epsilon}>0$ small such that for all $\epsilon<\bar{\epsilon}$
\begin{equation}\label{ineq:holder}
 \sup_{\substack{x,y\in B(o,r+1)\\|x-y|\leq r_0}}\sup_{t\in I} \frac{1}{\epsilon^d}|p(t/\epsilon^2,o,y/\epsilon)-p(t/\epsilon^2,o,x/\epsilon)|\leq \delta,
\end{equation}
and
\begin{equation}\label{ineq:kernel}
 \sup_{\substack{x,y\in B(0,r+1)\\|x-y|\leq r_0}}\sup_{t\in I} |k_t(y)-k_t(x)|\leq \delta.
\end{equation}
We can now easily bound $\sup_{t\in I}|J_4(t,\epsilon)|\leq \delta |B(x,r_0)|$. Furthermore, by assumption \ref{ass:b.4} taking $\bar{\epsilon}$ smaller if needed we get $\sup_{t\in I}|J_3(t,\epsilon)|\leq \delta |B(x,r_0)|$ for all $\epsilon\leq \bar{\epsilon}$. Exploiting \eqref{ineq:holder} we have a control on $J_1$. Namely $\sup_{t\in I}|J_1(t,\epsilon)|\leq \delta |B(x,r_0)|$. Finally by assumption \ref{ass:b.5} we have also that 
$\sup_{t\in I}|J(t,\epsilon)|\leq \delta |B(x,r_0)|$.

These estimates can be then used to control $|J_2(t,\epsilon)|$ for $\epsilon\leq \bar{\epsilon}$ uniformly in $t\in I$. Namely one gets
\[
\sup_{t\in I}| \epsilon^{-d}p(t/\epsilon^2,o,x/\epsilon)- a_\Lambda^{-1}k_t(x)|\leq 4\delta \biggl(\frac{\epsilon^d }{|B(x,r_0)|}\int_{\frac{1}{\epsilon}  B(x,r_0)} \Lambda(y)dy\biggr)^{-1}
\]
and we can take $\bar{\epsilon}$ even smaller to have by means of assumption \ref{ass:b.4}
\[
\biggl(\frac{\epsilon^d }{|B(x,r_0)|}\int_{\frac{1}{\epsilon}  B(x,r_0)} \Lambda(y)dy\biggr)^{-1} \leq (\delta+a_\Lambda).
\]
This gives for almost all $x\in R^d$
\[
\lim_{\epsilon\to 0}\sup_{t\in I}| \epsilon^{-d}p(t/\epsilon^2,o,x/\epsilon)- a_\Lambda^{-1}k_t(x)|=0.
\]
Consider now $r>0$ and $\delta>0$, and let $r_0\in(0,1)$ be chosen as before. Since $B(o,r)$ is compact there exists a finite covering $\{B(z,r_0)\}_{z\in\mathcal{X}}$ of $B(o,r)$ with $\mathcal{X}\subset B(o,r)$. Since $\mathcal{X}$ is finite, there exists $\bar{\epsilon}>0$ such that for all $\epsilon\leq\bar{\epsilon}$
\[
\sup_{z\in\mathcal{X}}\sup_{t\in I}| \epsilon^{-d}p(t/\epsilon^2,o,z/\epsilon)- a_\Lambda^{-1}k_t(z)|\leq \delta
\]
Next if $x\in B(o,r)$ then $x\in B(z,r_0)$ for some $z\in \mathcal{X}$ and we can write
\begin{align*}
\sup_{t\in I}| \epsilon^{-d}p(t/\epsilon^2,o,x/\epsilon)- a_\Lambda^{-1}k_t(x)| & \leq \sup_{t\in I}\epsilon^{-d}| p(t/\epsilon^2,o,x/\epsilon)- p(t/\epsilon^2,o,z/\epsilon)| \\
&+\sup_{t\in I}| \epsilon^{-d}p(t/\epsilon^2,o,z/\epsilon)- a_\Lambda^{-1}k_t(z)|\\
&+a_\Lambda^{-1}\sup_{t\in I}| k_t(x)- k_t(z)|.
\end{align*}
Since $x,z\in B(o,r+1)$ and $|x-z|\leq r_0$, inequality \eqref{ineq:kernel} implies that the last addendum is bounded by $\delta$, the second term is also bounded uniformly by $\delta$ since $z\in \mathcal{X}$. We can finally bound the first term uniformly by $\delta$ by means of \eqref{ineq:holder}. This ends the proof.
\end{proof}

\subsection{Application to Diffusions in Random Environment}

In this section we finally apply Theorem \ref{thm:localCLT} to obtain Theorem \ref{thm:localCLTRandom}

\begin{proof}[Proof of Theorem \ref{thm:localCLTRandom}]It is enough to show that assumptions \ref{ass:b.1}-\ref{ass:b.5} are satisfied for $\mu$-almost all realizations of the environment, then Theorem \ref{thm:localCLT} gives the result. 
	
By construction \ref{ass:a.1} implies \ref{ass:b.1} for $\mu$-almost all $\omega\in\Omega$. Assumption \ref{ass:a.2} together with the ergodic theorem \cite[Theorem 11.18]{landim} gives easily \ref{ass:b.2}-\ref{ass:b.4} $\mu$-almost surely. Finally \ref{ass:b.5} for $\mu$-almost all $\omega\in\Omega$ follows directly from \ref{ass:a.3}.

The second part of the statement follows easily since, if we assume  that $\lambda^\omega(\cdot)^{-1},\Lambda^\omega(\cdot)\in L^\infty_{loc}(\R^d)$ for $\mu$-almost all $\omega\in\Omega$, Theorem \ref{thm:localCLT} holds for all $o\in\R^d$, $\mu$-almost surely. Indeed, the density $p^\omega_t(x,y)$ is a continuous function of $x$ and $y$ by classical results in PDE theory \cite{gilbarg2001elliptic}.
\end{proof}
%\section{Counter Example}
%
%We start by stating a consequence of the local central limit theorem, whose proof can be found in \cite{deuschelslowikandresharnack} with minor notation modification.
%\begin{lemma} Assume that $\mu$-almost surely for all $x\in\R^d$ and for all $r>0$
%\[
%\lim_{\epsilon\to0} \sup_{x\in B(0,r)}\sup_{t\in I} |\epsilon^{-d} p^\omega(t/\epsilon^2,0,x/\epsilon)-\mean_{\mu}[\Lambda]^{-1}k_t(x)|=0.
%\]
%Then there exist $t_0(\omega)>0$ and a constant $C_7>0$ such that $\mu$-almost surely for all $t\geq t_0(\omega)$ and $x\in B(0,\sqrt{t})$ we have
%\begin{equation}
%C_7^{-1}\, t^{-d/2} \leq p_t^\omega(0,x) \leq C_7\, t^{-d/2}.
%\end{equation} 
%\end{lemma}

% % % % % % % % % % % % % % % % % % % % % % % % % % % % % % % % % % % % % % % % % % % % % % % % % % % % % % % % % % % % % % % % % % % % % % % % % % % % % % % % 

\appendix

\section{Dirichlet Forms}

Let $X$ be a locally compact metric separable space, and $m$ a positive Radon measure on $X$ such that $\supp[X]=m$. Consider the Hilbert space $L^2(X,m)$ with scalar product $\langle \cdot,\cdot\rangle$. We call a \emph{symmetric} form, a non-negative definite bilinear form $\E$ defined on a dense subset $\mathcal{D}(\E)\subset L^2(X,m)$. Given a symmetric form $(\E,\mathcal{D}(\E))$ on $L^2(X,m)$,  the form $\E_\beta\defeq \E+\beta \langle \cdot,\cdot\rangle$ defines a new symmetric form on $L^2(X,m)$ for each $\beta>0$. Note that $\mathcal{D}(\E)$ is a pre-Hilbert space with inner product $\E_\beta$. If $\mathcal{D}(\E)$ is complete with respect to $\E_\beta$, then $\E$ is said to be \emph{closed}. 

A closed symmetric form $(\E,\mathcal{D}(\E))$ on $L^2(X,m)$ is called a \emph{Dirichlet form} if it is Markovian, namely if for any given $u\in \mathcal{D}(\E)$, then $v=(0\vee u)\wedge1$ belongs to $\mathcal{D}(\E)$ and $\E(v,v)\leq \E(u,u)$.

We say that the Dirichlet form $(\E,\mathcal{D}(\E))$ on $L^2(X,m)$ is \emph{regular} if there is a subset  $\mathcal{H}$ of $\mathcal{D}(\E)\cap C_0(X)$ dense in $\mathcal{D}(\E)$ with respect to $\E_1$ and dense in $C_0(X)$ with respect to the uniform norm. $\mathcal{H}$ is called a \emph{core} for $\mathcal{D}(\E)$.

We say that the Dirichlet form $(\E,\mathcal{D}(\E))$ is \emph{local} if for all $u,v\in\mathcal{D}(\E)$ with disjoint compact support $\E(u,v)=0$. $\E$ is said \emph{strongly local} if $u,v\in \mathcal{D}(\E)$ with compact support and $v$ constant on a neighborhood of $\supp u$ implies $\E(u,v)=0$. 

\begin{lemma}\label{lem:cutoff} Let $B\subset \R^d$ and consider a cutoff $\eta\in C_0^\infty(B)$. Then, $u\in\F_{loc}\cup \F^\Lambda_{loc}$ implies $\eta u\in \F_B$.
\end{lemma}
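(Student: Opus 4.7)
My plan is to approximate $\eta u$ directly by a sequence in $C_0^\infty(B)$, by starting from an $\F^\theta$-approximation of a representative of $u$ on a neighborhood of $\supp \eta$ and then estimating the commutator error from the cutoff via the product rule. First, I will fix a ball $B_1$ with $\supp \eta \subset B_1$ and $\overline{B_1} \subset B$. The definition of $\F^\theta_{loc}$ applied to $B_1$ furnishes a global $v \in \F^\theta$ with $v = u$ almost everywhere on $B_1$, where $\theta = 1$ if $u \in \F_{loc}$ and $\theta = \Lambda$ if $u \in \F^\Lambda_{loc}$. Since $\eta$ is supported in $B_1$, one has $\eta u = \eta v$, and by the remark that $\F_B = \F_B^\Lambda$ (both being the completion of $C_0^\infty(B)$ in the $\E$-norm), it will suffice to produce a sequence in $C_0^\infty(B)$ converging to $\eta v$ in the $\E$-norm.

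Next, I will take $\phi_n \in C_0^\infty(\R^d)$ with $\phi_n \to v$ in $\E$ and in $L^2(\theta\, dx)$. Then $\eta \phi_n \in C_0^\infty(B)$, and the product rule together with $\langle a\nabla\eta, \nabla\eta\rangle \leq \Lambda|\nabla\eta|^2$ yields
\[
\E(\eta \phi_n - \eta v) \leq 2 \|\eta\|_\infty^2\, \E(\phi_n - v) + 2 \|\nabla \eta\|_\infty^2 \int_{\supp \eta} \Lambda (\phi_n - v)^2\, dx.
\]
The first term vanishes by construction. When $\theta = \Lambda$, the second term is at most $\|\nabla \eta\|_\infty^2 \|\phi_n - v\|_{2,\Lambda}^2 \to 0$ and the argument closes immediately.

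The hard part will be the case $\theta = 1$: one must upgrade the $L^2(dx)$-convergence of $\phi_n$ to $v$ into $L^2(\Lambda\, dx)$-convergence on $\supp \eta$. I will fix $\xi \in C_0^\infty(B)$ with $\xi \equiv 1$ on $\supp \eta$ and apply the classical Sobolev inequality $\|f\|_\rho \lesssim \|\nabla f\|_{2q/(q+1)}$ on $\R^d$ to $f = \xi(\phi_n - \phi_m) \in C_0^\infty(\R^d)$, where $\rho$ is the Sobolev conjugate appearing in \eqref{eq:rho}. Expanding $\nabla(\xi(\phi_n - \phi_m))$ by the product rule and using H\"older's inequality with the weight $\lambda^{-q}$, which is integrable on $\supp \xi$ by \ref{ass:b.2}, together with the Cauchy property of $\phi_n$ in $\E$ and in $L^2(dx)$, I will show that $\xi \phi_n$ is Cauchy in $L^\rho(\R^d)$. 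Uniqueness of the $L^2$-limit then identifies its $L^\rho$-limit with $\xi v$, so $\phi_n \to v$ in $L^\rho(\supp \eta)$. A final application of H\"older, exploiting $\Lambda \in L^p(\supp \eta)$ together with the assumption $1/p + 1/q < 2/d$ (which yields $\rho > 2p^*$), will give
\[
\int_{\supp \eta} \Lambda (\phi_n - v)^2\, dx \leq \|\Lambda\|_{p, \supp \eta}\,\|\phi_n - v\|_{2p^*, \supp \eta}^2 \lesssim \|\phi_n - v\|_{\rho, \supp \eta}^2 \to 0,
\]
closing the argument. Thus the entire effort, and the main obstacle, is concentrated in bridging the gap between the available $L^2(dx)$-convergence and the weighted $L^2(\Lambda\, dx)$-control on $\supp \eta$, which is exactly what the subcritical integrability condition $1/p + 1/q < 2/d$ provides through the Sobolev--H\"older bootstrap above.
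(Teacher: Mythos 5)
Your proof is correct and follows essentially the same route as the paper's: pick a global representative of $u$ on a ball containing $\supp\eta$, approximate it by $\phi_n \in C_0^\infty$, estimate $\E(\eta\phi_n - \eta v)$ by the product rule, and for $\theta=1$ upgrade the $L^2(dx)$-convergence to $L^2(\Lambda\,dx)$-convergence on $\supp\eta$ via the Sobolev embedding of $W^{1,2q/(q+1)}$ into $L^\rho$ and H\"older with $\Lambda\in L^p_{loc}$. Your version simply spells out the Sobolev--H\"older bootstrap that the paper compresses into the remark "$\{f_n\}$ is Cauchy in $W^{2q/(q+1)}(B)$, hence in $L^2(B,\Lambda\,dx)$."
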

\begin{proof} Take $u\in\F_{loc}^\Lambda$, then there exists $\bar{u}\in\F^\Lambda$ such that $u=\bar{u}$ on $2B$. Let $\{f_n\}_\N\subset C_0^\infty(\R^d)$ be such that $f_n\to \bar{u}$ with respect to $\E+\langle\cdot,\cdot\rangle_\Lambda$. Clearly $\eta f_n\in\F_B^\Lambda$ and $\eta f_n\to \eta \bar{u}=\eta u$ in $L^2(B,\Lambda dx)$. Moreover
\[
\E(\eta f_n-\eta f_m)\leq2\E(f_n-f_m)+\|\nabla\eta\|_\infty^2\int_B|f_n-f_m|^2\Lambda dx.
\]
Hence $\eta f_n$ is Cauchy in $L^2(B,\Lambda dx)$ with respect to $\E+\langle\cdot,\cdot\rangle_\Lambda$, which implies that $\eta u\in \F^\Lambda_B = \F_B$. If $u\in\F_{loc}$ the proof is similar, and one has only to observe that $\{f_n\}$ is Cauchy in $W^{2q/(q+1)}(B)$, which by Sobolev's embedding theorem implies that $\{f_n\}$ is Cauchy in $L^2(B,\Lambda dx)$.
\end{proof}

\begin{lemma}\label{lemma:composition} Let $\psi:\R\to \R$ be a globally Lipschitz function with constant $L$ and with $\psi(0)=0$. If $u\in\F$ ($\F_{loc}$), then $\psi(u)\in \F$ ($\F_{loc}$).
\end{lemma}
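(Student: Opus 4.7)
The plan is to reduce the claim to the standard fact in Dirichlet form theory that \emph{normal contractions operate}: if $(\E,\F)$ is a Dirichlet form on $L^2(X,m)$ and $\varphi:\R\to\R$ is a normal contraction, i.e.\ $\varphi(0)=0$ and $|\varphi(x)-\varphi(y)|\leq|x-y|$, then $u\in\F$ implies $\varphi(u)\in\F$ with
\[
\E(\varphi(u),\varphi(u))\leq \E(u,u).
\]
This is a classical consequence of the Markovian property (see \cite[Theorem~1.4.1]{fukushima1994dirichlet}), and it applies in our setting because $(\E,\F)$ has been identified in Section~2.1 as a strongly local regular Dirichlet form on $L^2(\R^d,dx)$.

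Given the hypothesis that $\psi$ is globally Lipschitz with constant $L$ and $\psi(0)=0$, I would first dispense with the trivial case $L=0$ (which forces $\psi\equiv 0$). For $L>0$, set $\varphi\defeq \psi/L$. Then $\varphi(0)=0$ and $|\varphi(x)-\varphi(y)|\leq|x-y|$, so $\varphi$ is a normal contraction. By the Dirichlet form property recalled above, $u\in\F$ implies $\varphi(u)\in\F$, and since $\F$ is a linear subspace of $L^2(\R^d,dx)$, linearity yields $\psi(u)=L\varphi(u)\in\F$, with the quantitative bound $\E(\psi(u),\psi(u))\leq L^2\E(u,u)$.

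For the localized statement, fix $u\in\F_{loc}$ and an arbitrary ball $B\subset\R^d$. By definition there exists $u_B\in\F$ with $u_B=u$ almost everywhere on $B$. The first part of the proof gives $\psi(u_B)\in\F$, and the pointwise identity $\psi(u_B)=\psi(u)$ on $B$ shows that $\psi(u)$ agrees with an element of $\F$ on $B$. Since $B$ was arbitrary, $\psi(u)\in\F_{loc}$.

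There is no real obstacle here: the entire content of the lemma is encoded in the Markovian property of the Dirichlet form, and the condition $\psi(0)=0$ is exactly what one needs to ensure that $\psi(u)\in L^2$ (via $|\psi(u)|\leq L|u|$) and that the compact-support/localization statements are preserved. The only mild care is to invoke the right formulation of the contraction principle and to reduce the Lipschitz constant $L$ to $1$ by rescaling, which is done in the first paragraph above.
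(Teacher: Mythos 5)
Your proof is correct and follows essentially the same strategy as the paper's: rescale $\psi$ by $L$ to obtain a normal contraction, invoke the Markovian property of the Dirichlet form to conclude $\psi(u)\in\F$, and localize by working with the representative $u_B\in\F$ on each ball. The only cosmetic difference is that the paper phrases the key step as ``$\psi(\tilde u)/L$ is a normal contraction of $\tilde u$'' (the pointwise formulation) rather than ``$\psi/L$ is a normal contraction function,'' but both rest on the same theorem from \cite{fukushima1994dirichlet}.
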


\begin{proof} Let $B\subset\R^d$ be a ball, and $\tilde{u}\in \F$ such that $u=\tilde{u}$ on $B$. 
Then, it is easy to verify that the function $\psi(\tilde{u})/L$ is a normal contraction of $\tilde{u}$, since
\[
|\psi(\tilde{u}(x))-\psi(\tilde{u}(y))|\leq L|\tilde{u}(x)-\tilde{u}(y)|,\quad |\psi(\tilde{u}(x))|\leq L |\tilde{u}(x)|.
\]
Hence $\psi(\tilde{u})\in\F$; in particular $\psi(\tilde{u})=\psi(u)$ on $B$, which gives the thesis.
\end{proof}

\begin{lemma} \label{lemma:parabolic}Let $F:\R\to\R$ be a twice differentiable  function with bounded second derivative and positive first derivative. Assume that $F'(0)=0$. Then for any caloric  (subcaloric, supercaloric) function $u$ we have
\[
\frac{d}{dt} ( F(u_t),\phi )_{\Lambda} + \E(u_t,F'(u_t)\phi)=0,\quad(\leq,\,\geq)
\]
for all $\phi\in C_0^\infty(\R^d)$, $\phi>0$ and $t>0$.
\end{lemma}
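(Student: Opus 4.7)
The guiding heuristic is formal: applying the chain rule in time together with the caloric equation tested against $F'(u_t)\phi$ gives
\[
\frac{d}{dt}(F(u_t),\phi)_\Lambda \;=\; (F'(u_t)\phi,\partial_t u_t)_\Lambda \;=\; -\E(u_t, F'(u_t)\phi).
\]
The plan is to make this rigorous in three steps: (i) check that $F'(u_t)\phi$ is an admissible test function; (ii) justify the time derivative through a Taylor expansion; (iii) control the resulting remainder.

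For (i), since $F''$ is bounded and $F'(0)=0$, the derivative $F'$ is globally Lipschitz with $F'(0)=0$. Lemma \ref{lemma:composition} (whose proof is a normal-contraction argument that adapts immediately to the weighted setting) then gives $F'(u_t)\in\F^\Lambda_{loc}$. Choosing a ball $B\supset\supp\phi$ and invoking Lemma \ref{lem:cutoff} yields $F'(u_t)\phi\in\F_B=\F^\Lambda_B$, the latter identification being the remark following the weighted Sobolev inequality. Moreover, since $F'\geq 0$, the product $F'(u_t)\phi$ is non-negative whenever $\phi\geq 0$, which is what preserves the direction of the sub/supercaloric inequality when the caloric relation is tested against it.

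For (ii), I would use the integral form of Taylor's theorem, $F(b)-F(a)=F'(a)(b-a)+\int_0^1(1-s)F''(a+s(b-a))(b-a)^2\,ds$, with $a=u_t(x)$, $b=u_{t+h}(x)$. Integrating against $\phi\Lambda\,dx$ and dividing by $h$:
\[
\tfrac{1}{h}\bigl(F(u_{t+h})-F(u_t),\phi\bigr)_\Lambda = \tfrac{1}{h}\bigl(u_{t+h}-u_t,\, F'(u_t)\phi\bigr)_\Lambda + \tfrac{R_h}{h},
\]
where $|R_h|\leq \tfrac12\|F''\|_\infty\|\phi\|_\infty\int_B (u_{t+h}-u_t)^2\,\Lambda\,dx$. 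The first term on the right converges to $-\E(u_t,F'(u_t)\phi)$ by applying the (sub/super)caloric equation to the now-admissible fixed test function $F'(u_t)\phi$, with the correct inequality sign guaranteed by the non-negativity checked in (i).

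The main obstacle is step (iii), namely $R_h/h\to 0$, which is equivalent to $\|u_{t+h}-u_t\|_{L^2(B,\Lambda\,dx)}^2=o(h)$; this is a strong $L^2$-continuity assertion slightly stronger than the weak differentiability $t\mapsto(u_t,\psi)_\Lambda$ built into the definition of a caloric function. In the applications of interest $u_t=P_t^G f$, this is a classical consequence of semigroup theory: for $t>0$, $u_t$ lies in the domain of the generator, $\partial_t u_t$ exists strongly in $L^2$, and $\|u_{t+h}-u_t\|_{L^2}=O(h)$. In greater generality one can mollify $u$ in time, run the whole argument on the resulting $C^1$-in-time family (for which the chain rule is unambiguous), and pass to the limit using the weak differentiability in the definition together with the uniform-in-$t$ bounds from caloricity. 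This mollification-and-limit step is the technical crux of the argument.
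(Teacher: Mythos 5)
Your plan---check admissibility of $F'(u_t)\phi$ via Lemmas \ref{lemma:composition} and \ref{lem:cutoff}, Taylor-expand $F$, and pass to the limit---is exactly the paper's. Where you stop short is step (iii). You claim that controlling the remainder requires a strong $L^2$ continuity assertion ``slightly stronger than'' the weak differentiability built into the definition of a caloric function, and propose to import either semigroup regularity (when $u_t = P_t^G f$) or a mollification-in-time limiting argument. This is a gap in your proof as written: you do not actually carry out either step, and---more to the point---neither is needed.

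The definition of a (sub/super)caloric function already requires that $t\mapsto (u_t,\psi)_\Lambda$ be differentiable for \emph{every} $\psi\in L^2(G,\Lambda dx)$. Hence, for $h$ in a small punctured neighbourhood of $0$, the family of linear functionals $\psi\mapsto h^{-1}(u_{t+h}-u_t,\psi)_\Lambda$ is pointwise bounded, and by the uniform boundedness principle it is norm-bounded: $\sup_h\|(u_{t+h}-u_t)/h\|_{2,\Lambda}<\infty$. This yields
\[
\frac{1}{h}\bigl|(R(u_{t+h}-u_t),\phi)_\Lambda\bigr| \leq h\,\|F''\|_\infty\|\phi\|_\infty\,\Bigl\|\frac{u_{t+h}-u_t}{h}\Bigr\|^2_{2,\Lambda} \longrightarrow 0,
\]
which is precisely the estimate the paper uses. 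Inserting this observation in place of the semigroup or mollification sketch completes your argument in the stated generality, with no additional hypotheses on $u$.
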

\begin{proof} Observe that $F''$ bounded and $F'(0)=0$ implies that $F'(u_t)\in \F^\Lambda$ by Lemma \ref{lemma:composition}.
\begin{align*}
\frac{d}{dt} ( &F(u_t),\phi )_{\Lambda} = \lim_{h\downarrow 0}\frac{1}{h}( F(u_{t+h})-F(u_{t}),\phi )_{\Lambda}\\
	&= \lim_{h\downarrow 0} \frac{1}{h}( F'(u_{t})(u_{t+h}-u_t),\phi)_{\Lambda} + \frac{1}{h}( R(u_{t+h}-u_t),\phi )_{\Lambda}.
\end{align*}
Where $|R(x)|\leq \|F''\|_\infty |x|^2$.
The first summand converges to $\E(u_t,F'(u_t)\phi)$ since $u_t$ solves \eqref{eq:parabolic}. It remains to show that the second summand goes to zero.
\[
\frac{1}{h}|( R(u_{t+h}-u_t),\phi )_{\Lambda}|\leq h\|\phi\|_\infty \|F''\|_\infty\|(u_{t+h}-u_t)h^{-1}\|^2_{2,\Lambda}\to 0
\]
as $h\to 0$. For subcaloric and supercaloric functions the proof follows the same lines.
\end{proof}
% LEMMA NON NECESSARIO
%\begin{lemma}
%	Let $u_t>\epsilon$ be a locally bounded subsolution to \eqref{eq:parabolic} and let $\Phi\in C^2(\R)$ be a convex function with $\Psi(0) = 0$. Then $\Psi(u_t)$ is a subsolution of \eqref{eq:parabolic}.
%\end{lemma}
%\begin{proof} Observe that for all $\phi\geq 0$, $\phi\in C_0^\infty(\R^d)$
%	\begin{align*}
%	\frac{d}{dt}&(\Psi(u_t),\phi)_{\Lambda}+\E(\Psi(u_t),\phi) = \frac{d}{dt}(\Psi(u_t),\phi)+\int_{\R^d}\langle a \nabla u_t,\nabla \phi\rangle \Psi'(u_t) dx \\
%	& \leq \frac{d}{dt}(\Psi(u_t),\phi)_{\Lambda}+\int_{\R^d}\langle a \nabla u_t,\nabla \phi\rangle \Psi'(u_t) dx + \int_{\R^d}\langle a \nabla u_t,\nabla u_t\rangle \Psi''(u_t) \phi dx  \\
%	& = \frac{d}{dt}(\Psi(u_t),\phi)_{\Lambda}+\E(\Psi'(u_t)\phi,u_t)\leq 0
%	\end{align*}
%	where the last equality is true by Lemma \ref{lemma:parabolic}.
%\end{proof}
\section{Bombieri-Giusti's Lemma}

In order to obtain an Harnack inequality for positive weak solutions to an elliptic or parabolic equation we will make use of the following lemma due to Bombieri and Giusti, whose proof can be found in \cite{saloff2002aspects}.

Consider a collection of measurable subsets $U_{\sigma}$, $0 < \sigma \leq 1$, of a fixed measure space $(\mathcal{X},\mathcal{M})$ endowed with a measure $\gamma$, such that $U_{\sigma'}\subset U_\sigma$ whenever $\sigma' < \sigma$. In
our application, $U_\sigma$ will be $B(x,\sigma r)$ for some fixed ball $B(x,r)\subset\R^d$.
\begin{lemma}[Bombieri-Giusti \cite{Bombieri1971/72}]\label{lemma:BombieriGiusti} Fix $\delta \in(0,1)$. Let $\kappa$ and $K_1, K_2$ be positive constants and $0 < \alpha_0 \leq
\infty$. Let $u$ be a positive measurable function on $U\defeq U_1$ which satisfies
\begin{equation}\label{bombieri:conditionOne}\biggl(\int_{U_{\sigma'}} |u|^{\alpha_0}\,d\gamma\biggr)^{\frac{1}{\alpha_0}} \leq \Bigl(K_1 (\sigma-\sigma')^{-\kappa} \gamma(U)^{-1}\Bigr)^{\frac{1}{\alpha}-\frac{1}{\alpha_0}}\biggl(\int_{U_{\sigma}} |u|^{\alpha}\,d\gamma\biggr)^{\frac{1}{\alpha}} \
\end{equation}
for all $\sigma, \sigma'$ and $\alpha$ such that $0<\delta\leq \sigma'<\sigma\leq 1$ and $0<\alpha\leq \min\{1,\alpha_0/2\}$. Assume further that $u$ satisfies
\begin{equation}
\gamma(\log u>\ell)\leq K_2\,\gamma(U) \ell^{-1}
\end{equation}
for all $\ell>0$. Then
\[
\biggl(\int_{U_{\delta}} |u|^{\alpha_0}\,d\gamma\biggr)^{\frac{1}{\alpha_0}}\leq C_{BG}\, \gamma(U)^{\frac{1}{\alpha_0}},
\]
where $C_{BG}$ depends only on $K_1, K_2,\delta,\kappa$ and a lower bound on $\alpha_0$.
\end{lemma}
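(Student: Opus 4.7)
The plan is to follow the classical strategy: combine the reverse-Hölder-type hypothesis with the logarithmic distribution bound through an interpolation, and then telescope the resulting recursion along a geometric sequence of radii from $\delta$ up to $1$.

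First, normalize so that $\gamma(U)=1$ (divide both sides of each hypothesis by an appropriate power of $\gamma(U)$). Then replace $u$ by $u_N := u\wedge N$, which satisfies the same two hypotheses with the same constants, and whose $L^{\alpha_0}$ norm on $U$ is at most $N$; the aim is to prove a bound on $\phi(\sigma):=\bigl(\int_{U_\sigma}u^{\alpha_0}\,d\gamma\bigr)^{1/\alpha_0}$ at $\sigma=\delta$ which is independent of $N$, so that $N\to\infty$ recovers the statement for $u$.

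The interpolation ingredient: for any $\alpha\in(0,\min\{1,\alpha_0/2\}]$ and threshold $T>1$, split
\[
\int_{U_\sigma}u^\alpha\,d\gamma=\int_{\{u\le T\}\cap U_\sigma}u^\alpha\,d\gamma+\int_{\{u>T\}\cap U_\sigma}u^\alpha\,d\gamma.
\]
The first piece is bounded by $T^\alpha\gamma(U)$. For the tail, apply H\"older's inequality with exponents $\alpha_0/\alpha$ and $\alpha_0/(\alpha_0-\alpha)$ and then invoke the log-inequality $\gamma(\log u>\log T)\le K_2/\log T$ to obtain
\[
\int_{\{u>T\}\cap U_\sigma}u^\alpha\,d\gamma\le \phi(\sigma)^\alpha\bigl(K_2/\log T\bigr)^{1-\alpha/\alpha_0}.
\]
Setting $\mu:=(K_2/\log T)^{1/\alpha-1/\alpha_0}$, this reads
\[
\Bigl(\int_{U_\sigma}u^\alpha\,d\gamma\Bigr)^{1/\alpha}\lesssim\,2^{1/\alpha}\bigl(T+\mu\,\phi(\sigma)\bigr).
\]

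Plugging this into the reverse-H\"older hypothesis gives the recursion
\[
\phi(\sigma')\le A(\alpha,\sigma-\sigma')\cdot T+A(\alpha,\sigma-\sigma')\cdot\mu\,\phi(\sigma), \qquad A(\alpha,s):=[K_1 s^{-\kappa}]^{1/\alpha-1/\alpha_0}\,2^{1/\alpha}.
\]
Choose $\mu$ so that the coefficient $A\mu$ of $\phi(\sigma)$ equals any preassigned $\epsilon\in(0,1)$; this fixes $T=\exp\bigl\{K_2\mu^{-1/(1/\alpha-1/\alpha_0)}\bigr\}$ and therefore converts the above into a clean self-improving estimate
\[
\phi(\sigma')\le H(\sigma-\sigma',\alpha,\epsilon)+\epsilon\,\phi(\sigma),
\]
where $H$ is an explicit function of the input parameters $K_1, K_2, \kappa, \alpha_0$ and the gap $\sigma-\sigma'$.

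It remains to iterate. Along the geometric chain $\sigma_n=\delta+(1-\delta)(1-2^{-n})\nearrow 1$, apply the recursion with a sequence $(\alpha_n,\epsilon_n)$ to be chosen, obtaining after telescoping
\[
\phi(\delta)\le\sum_{n\ge 0}\Bigl(\prod_{k<n}\epsilon_k\Bigr)H_n+\lim_{N\to\infty}\Bigl(\prod_{k<N}\epsilon_k\Bigr)\phi(\sigma_N).
\]
The truncation ensures $\phi(\sigma_N)\le N^{}\gamma(U)^{1/\alpha_0}$, so any sequence $\epsilon_k<1$ kills the last term, and what remains is to verify that the series converges with a bound depending only on $K_1,K_2,\kappa,\delta$ and a lower bound on $\alpha_0$. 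This convergence is the main obstacle: since $\sigma_n-\sigma_{n+1}$ decays geometrically, the factor $H_n$ carries an exponential $\exp\{cK_1K_2(\sigma_n-\sigma_{n+1})^{-\kappa}\}$ which is double-exponential in $n$, so the weights $\prod_{k<n}\epsilon_k$ must be chosen to decay faster than this growth. This forces $\alpha_n\to 0$ and $\epsilon_n\to 0$ at suitable rates, exploiting the freedom in the hypothesis to use arbitrarily small exponents $\alpha$; balancing these two schedules is where all of the quantitative dependence of $C_{BG}$ on $\alpha_0$ (via a lower bound) is concentrated. Once the series is bounded, letting $N\to\infty$ and undoing the normalization $\gamma(U)=1$ yields $\phi(\delta)\le C_{BG}\gamma(U)^{1/\alpha_0}$, as required.
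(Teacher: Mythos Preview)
The paper does not actually prove this lemma; it states it in the appendix and refers to Saloff-Coste's book \cite{saloff2002aspects} for the proof. So there is no argument in the paper to compare yours against, only the classical one you are implicitly aiming at.

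Your outline has the right architecture (normalization, truncation, the split at level $T$, H\"older on the tail, and iteration of the resulting recursion), but it stops exactly at the point where the real content of the proof lies. You yourself flag that ``balancing these two schedules'' is the main obstacle, and then simply assert that it can be done. It cannot be done in the way you describe. With a \emph{pre-fixed} geometric sequence $\sigma_n=\delta+(1-\delta)(1-2^{-n})$, the inhomogeneous term $H_n$ carries (after your substitution $T=\exp\{K_2(A/\epsilon)^{1/\beta}\}$) a factor of the form $\exp\{cK_1K_2\,2^{n\kappa}\,2^{1/(\alpha_n\beta_n)}\epsilon_n^{-1/\beta_n}\}$ as well as a prefactor $A_n\sim [K_12^{n\kappa}]^{1/\alpha_n}$. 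To make $\prod_{k<n}\epsilon_k$ absorb the exponential you need $\sum_{k<n}\log(1/\epsilon_k)\gtrsim 2^{n\kappa}$, hence $\log(1/\epsilon_n)\gtrsim 2^{n\kappa}$; but then $\epsilon_n^{-1/\beta_n}\approx\exp\{\alpha_n 2^{n\kappa}\}$ feeds back into the exponent of $H_n$, and simultaneously $A_n$ contributes $\sim n\kappa/\alpha_n$ to $\log H_n$. One checks that no choice of $\alpha_n\to 0$ reconciles these constraints: the series $\sum(\prod\epsilon_k)H_n$ diverges.

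The classical argument avoids this by making the choices \emph{adaptive}: at each step one picks $\alpha$ proportional to $1/\log\phi(\sigma)$ (so that $\int u^\alpha\lesssim \phi(\sigma)^{\alpha/2}$ via the log-distribution bound), obtaining a genuine contraction $\log\phi(\sigma')\le\theta\log\phi(\sigma)$ for some $\theta<1$, valid once $\log\phi(\sigma)$ exceeds a threshold depending on $K_1,K_2,\kappa,\sigma-\sigma'$. The sequence of radii is then also chosen in terms of the current value of $\phi$, so that the gaps sum to at most $1-\delta$. This adaptive feedback---letting the size of $u$ dictate the exponent---is the missing idea in your sketch, and without it the iteration does not close.
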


\bibliographystyle{plain}
\bibliography{bibliography}

\end{document}